\newcommand{\overbar}[1]{\mkern 1.5mu\overline{\mkern-1.5mu#1\mkern-1.5mu}\mkern 1.5mu}
\theoremstyle{definition}
\newtheorem{theorem}{Theorem}[section]
\newtheorem{lemma}[theorem]{Lemma}
\newtheorem{corollary}[theorem]{Corollary}
 \title{Maximal distance spectral radius of $4$-chromatic planar graphs}
\author{Aysel Erey}
\address{Department of Mathematics\\
Gebze Technical University\\
Kocaeli, Turkey}
\email{aysel.erey@gtu.edu.tr}
 \date{\today}
\begin{document}

\begin{abstract}
We show that the kite graph $K_4^{(n)}$ uniquely maximizes the distance spectral radius among all connected $4$-chromatic planar graphs on $n$ vertices.
\end{abstract}

\keywords{chromatic number, planar graphs, distance spectral radius}
\subjclass[2010]{05C50}

 \maketitle
 
\section{Introduction}
In this article all graphs are finite, simple and undirected.  The {\it distance} between two vertices $u$ and $v$ of a connected graph $G$, denoted by $d_G(u,v)$, is the size of a shortest path between $u$ and $v$ in $G$. Let $v_1,\dots ,v_n$ be the vertices of a connected graph $G$. The {\it distance matrix} $D(G)$ of $G$ is an $n$ by $n$ symmetric matrix given by
$$(D(G))_{ij}=d_G(v_i,v_j).$$

For the rest, we shall assume that all mentioned graphs are connected whenever the distance matrices are concerned. Since $D(G)$ is a symmetric matrix, all of its eigenvalues are real. By the Perron-Frobenius Theorem, the largest eigenvalue of $D(G)$ is positive and it has multiplicity one.  The {\it distance spectral radius} $\rho (G)$ of $G$ is the largest eigenvalue of $D(G)$. One of the central problems in the recent study of distance matrices is finding extremal graphs maximizing or minimizing the distance spectral radius in a given family of graphs, see \cite{survey} for a recent survey on the subject. One of the first results in this direction is due to \cite{path} where it was shown that the path graph $P_n$ uniquely maximizes the distance spectral radius in the family of graphs on $n$ vertices. In \cite{cacti}, the authors determine the extremal values of the distance spectral radius in the family of cacti with $n$ vertices and $k$ cycles. Extremal values of $\rho (G)$ were also determined in \cite{graft} when $G$ is a graph on $n$ vertices with $k$ pendant vertices. Maximal distance spectral radius of a graph was determined in various classes of  trees such as trees with a fixed maximum degree \cite{stevan}, trees on $n$ vertices and matching number $m$ \cite{nath_paul} and trees with given order and number of segments \cite{tree_2020}. Minimal distance spectral radius of a graph was determined in graphs on $n$ vertices with $k$ cut vertices (or $k$ cut edges) \cite{godsil} and in multipartite graphs of order $n$ with $t$ parts \cite{oboudi}.

 While  relations between  chromaticity of graphs and the spectra of other graph matrices such as Laplacian or adjacency matrices have been extensively studied in the literature, very few results are known on  relations between chromaticity and distance spectra of graphs. Let  $\mathcal{G}_{k,n}$ denote the family of connected $k$-chromatic graphs of order $n$. To minimize $\rho (G)$ of a graph $G$ in $\mathcal{G}_{k,n}$, it suffices to consider only complete $k$-partite graphs on $n$ vertices, thanks to Perron-Frobenius Theorem. Using this fact, it was shown in \cite{chromDist} that the Tur{\'a}n graph uniquely minimizes distance spectral radius in $\mathcal{G}_{k,n}$. On the other hand, the problem of maximizing the distance spectral radius of a graph in $\mathcal{G}_{k,n}$ is wide open. In this article, we make a contribution to this problem by studying the family of $4$-chromatic planar graphs which is one of the most interesting and challenging graphs families in the field of chromatic graph theory. The famous Four Color Theorem which stood as an unsolved problem for over a century says that every planar graph is $4$-colorable  \cite{appel,seymour}.

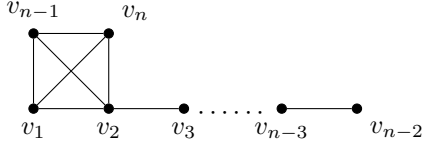
\begin{figure}[h!]
\begin{center}
\begin{tikzpicture}
[scale=1, vertices/.style={draw, fill=black, circle, minimum size = 10pt, inner sep=2pt}, another/.style={draw, fill=black, circle, minimum size = 3.5pt, inner sep=0.1pt}]
\node[another, label=below:{$v_1$}] (v1) at (0,0) {};
\node[another, label=below:{$v_2$}] (v2) at (1,0) {};
\node[another, label=below:{$v_3$}] (v3) at (2,0) {};
\node[another, label=above:{$v_{n-1}$}] (a) at (0,1) {};
\node[another, label=above right:{$v_{n}$}] (b) at (1,1) {};
\node[another, label=below:{$v_{n-3}$}] (c) at (3.3,0) {};
\node[another, label=below right:{$v_{n-2}$}] (d) at (4.3,0) {};
\foreach \to/\from in {v1/v2, v2/v3, v1/a, v1/b, v2/a, v2/b, a/b, c/d}
\draw [-] (\to)--(\from);
\node[label=above:{$\cdots \cdots$}](1) at (2.65,-.38){};
\end{tikzpicture}
\caption{The kite graph $K_4^{(n)}$.}
\label{kite_picture}
\end{center}
\end{figure}

The \textit{kite graph} $K_{k}^{(n)}$ is the graph of order $n$ obtained from a $k$-clique by attaching a path of length $n-k$  at any vertex of the $k$-clique (see Fig.~\ref{kite_picture} for $k=4$). Kite graphs are known to be extremal graphs for various graph parameters. For example, in \cite{clique} it was shown that $K_{k}^{(n)}$ is the unique extremal graph with maximum distance spectral radius among graphs with fixed clique number $k$ and order $n$. Moreover, it is known that $K_k^{(n)}$ has the largest Wiener index in $\mathcal{G}_{k,n}$  and it minimizes the adjacency spectral radius of a graph in $\mathcal{G}_{k,n}$ with $k\geq 4$, see \cite{kite_Wiener} and \cite {chromADJ} respectively. It is easy to see that $K_3^{(n)}$ is the unique graph maximizing the distance spectral radius of a graph in $\mathcal{G}_{3,n}$ (see Corollary~\ref{3chromatic}). In this article, we focus on $4$-chromatic planar graphs and our main result is the following:

\begin{theorem}\label{main}
The kite graph $K_4^{(n)}$ is the unique graph maximizing the distance spectral radius among all connected $4$-chromatic planar graphs of order $n$.
\end{theorem}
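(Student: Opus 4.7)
My plan is to proceed by case analysis on the clique number $\omega(G)$ of a candidate $4$-chromatic planar graph $G$ on $n$ vertices. Since $G$ is planar, $K_5\not\subseteq G$ and hence $\omega(G)\le 4$. Conversely, by Gr\"otzsch's theorem every triangle-free planar graph is $3$-colorable, so a $4$-chromatic planar graph must contain a triangle, giving $\omega(G)\ge 3$. Thus $\omega(G)\in\{3,4\}$.

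When $\omega(G)=4$, the conclusion is immediate from the cited result of \cite{clique}: $K_k^{(n)}$ uniquely maximizes $\rho$ among all connected graphs of order $n$ with clique number $k$. Applied with $k=4$, and using the fact that $K_4^{(n)}$ is itself $4$-chromatic and planar, this closes the case.

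The substantive case is $\omega(G)=3$, where $G$ has triangles but no $K_4$. Here I would proceed in three steps. First, $G$ must contain a $4$-critical subgraph $H\neq K_4$; since the smallest $4$-critical graph other than $K_4$ is the wheel $W_5$ on six vertices, we have $|V(H)|\ge 6$. Second, I would apply branch-transfer lemmas for the distance spectral radius, in the spirit of the graft transformations used in \cite{graft}, to reduce the extremal problem within this sub-class to the ``generalized kite'' form $H^{(n)}$: a small $4$-critical non-$K_4$ planar core with a pendant path attached. The underlying principle is that relocating a pendant branch so as to elongate a path strictly increases $\rho$, so any maximizer within the sub-class must have this form. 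Third, I would compare $\rho(H^{(n)})$ with $\rho(K_4^{(n)})$ directly; since $|V(H)|\ge 6$, the pendant path in $H^{(n)}$ has length at most $n-6$ versus $n-4$ for $K_4^{(n)}$, so the diameter is strictly smaller, and a Perron-vector comparison (or an explicit row-sum estimate) should yield $\rho(H^{(n)})<\rho(K_4^{(n)})$.

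The main obstacle is ensuring that the branch-transfer moves in step two simultaneously preserve planarity and $4$-chromaticity. Chromatic number is fragile under local edge modifications, so one must arrange each move so as to leave the critical subgraph $H$ entirely intact and to avoid creating a planarity obstruction after the shift; this likely requires fixing a planar embedding and performing all transfers within a chosen face. A secondary challenge is making the strict comparison in step three uniform over the (possibly several) small $4$-critical planar cores $H$ that could appear.
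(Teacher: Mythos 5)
Your case split on $\omega(G)$ is fine as far as it goes, and the $\omega(G)=4$ case is a legitimate shortcut: the result of \cite{clique} does give $\rho(G)\le\rho(K_4^{(n)})$ with equality only for $K_4^{(n)}$ whenever $G$ contains a $K_4$, which is cleaner than the paper's treatment of that case. The genuine gap is in the $K_4$-free case, at your step two. The only branch-transfer tools available (Lemma~\ref{stevan} and Lemma~\ref{godsil}, i.e.\ the graft transformations you allude to) let one conclude that an extremal graph is a $4$-critical core $H$ with pendant \emph{paths} attached at an \emph{independent set} of vertices of $H$ (this is Lemma~\ref{k_critical_lemma}); they do not let you merge paths attached at nonadjacent vertices into a single path, and no known transformation does this while preserving $4$-chromaticity. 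Your claimed reduction to a ``generalized kite'' $H^{(n)}$ with one pendant path is therefore unjustified, and the difficulty is not merely the planarity/chromaticity bookkeeping you flag: the paper itself cannot make this reduction and instead must compare $K_4^{(n)}$ against genuinely multi-path configurations such as the graphs $G_1$, $G_2$ (Moser spindle with two paths) and $M_1'$ (Mycielskian with three paths) by explicit Perron-vector computations.

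The second, larger gap is your step three. You speak of ``the (possibly several) small $4$-critical planar cores $H$,'' but there are infinitely many $4$-critical planar graphs, of unbounded order, and the core $H$ may occupy almost all of $G$ with only short (or no) paths attached; a diameter comparison plus a vague Perron-vector estimate cannot be made uniform over this infinite family. Handling all such cores is precisely the bulk of the paper: one shows that every $4$-critical planar graph other than $K_4$, the Moser spindle $M$ and the Mycielskian $M'$ either has $\Delta\ge 5$ or contains three cactus-type cycles (Theorem~\ref{4crit_prop_P_thm}), which requires the Gr\"unbaum--Aksenov four-triangle theorem and the Borodin--Dvo\v{r}\'ak--Kostochka--Lidick\'y--Yancey characterization of planar $4$-critical graphs with exactly four triangles; graphs with that structure are then beaten by $K_4^{(n)}$ via spanning brooms and saw-type cacti (Lemmas~\ref{Delta5} and \ref{cactus_not_extr}), and the finitely many exceptional cores $M$, $M'$ are dealt with by the explicit comparisons mentioned above. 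Without some substitute for this structural step, your outline does not yield a proof in the $\omega(G)=3$ case.
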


\section{Graph Theory Terminology}

Let $G$ be a graph with vertex set $V(G)$ and edge set $E(G)$. The {\it order} of $G$ is $|V(G)|$ and its {\it size} is $|E(G)|$. We write $P_n$, $K_n$ and $C_n$ for the path, complete and cycle graphs on $n$ vertices respectively. The graph $K_1$ is called the {\it trivial graph}. We denote a cycle graph $C_n$ by $v_1v_2\cdots v_n$ if $E(C_n)=\{v_iv_{i+1}: 1\leq i\leq n-1\}\cup\{v_1v_n\}$. A {\it k-clique} of a graph $G$ is a subgraph isomorphic to the complete graph $K_k$ and the {\it clique number} of $G$ is the order of the largest clique in $G$. A subset of vertices $S\subseteq V(G)$ is called an {\it independent set} of $G$ if no two vertices in $S$ are adjacent in $G$. The degree of a vertex $v$ in $G$ is denoted by $deg_G(v)$ and the open and closed neighborhoods of $v$ in $G$ are denoted by $N_G(v)$ and $N_G[v]$ respectively. For a subset of vertices $S$ in $G$, the vertex subset $N_G(S)$ consists of all vertices in $G$ which has at least one neighbor in $S$. Also, we define $N_G[S]=S\cup N_G(S)$. We say that $v$ is a {\it degree k vertex} in $G$ if $deg_G(v)=k$. We write $\Delta (G)$ and $\delta (G)$ for the maximum and minimum degrees of $G$ respectively. An {\it isolated vertex} of a graph $G$ is a vertex which has no neighbors in $G$. A {\it leaf vertex} is a vertex of degree $1$ and a {\it pendant edge} is an edge which contains a leaf vertex. Two vertices $u$ and $v$ in $G$ are called {\it twin vertices} if either $N_G(u)=N_G(v)$ or $N_G[u]=N_G[v]$. Let $\overline{G}$ denote the complement of $G$ and $rG$ denote the disjoint union of $r$ copies of $G$. The {\it join} of $G$ and $H$, denoted by $G\vee H$, is the graph obtained from the disjoint union of $G$ and $H$ by adding all edges $uv$ where $u\in V(G)$ and $v\in V(H)$. We say that a graph $G'$ is obtained from $G$ by attaching $H$ to $G$ at $v$ if $G'=G\cup H$ and $V(G)\cap V(H)=\{v\}$. If $H$ is a subgraph of $G$, we write $G\setminus H$ for the subgraph of $G$ induced by $V(G)\setminus V(H)$.
A {\it proper $k$-coloring} of a graph $G$ is a function $f:V(G)\rightarrow \{1,\dots , k\}$ such that $f(u)\neq f(v)$ for every edge $uv\in E(G)$. The {\it chromatic number} $\chi (G)$ of a graph $G$ is the minimum integer $k$ such that $G$ has a proper $k$-coloring. A graph $G$ is called {\it k-chromatic} if $\chi (G)=k$ and $G$ is called {\it k-colorable} if $\chi (G)\leq k$. We say that $G$ is a {\it k-critical} graph if $G$ is $k$-chromatic and $\chi (H)< \chi (G)$ for every proper subgraph $H$ of $G$. It is well known that if $G$ is $k$-critical, then $\delta (G)\geq k-1$. A graph is called {\it planar} if it can be drawn on the plane such that no two edges cross each other. A {\it plane} graph is a drawing of a planar graph on the plane such that no two edges cross each other.

Let $G_1,\dots , G_k$ be subgraphs of $G$. We say that $G_1,\dots , G_k$ are {\it vertex (edge) disjoint} if no two of them have a common vertex (edge). We say that $G_1,G_2, G_3$ form three {\it cactus-type cycles} in $G$ if they are edge disjoint cycles and the edge-induced subgraph of $G$ induced by $E(G_1)\cup E(G_2)\cup E(G_3)$ has exactly three cycles in it. A connected graph $G$ is called a \textit{cactus}  if every two cycles in $G$ have at most one common vertex. Let $\mathcal{C}(n,k)$ denote the family of cacti of order $n$ having exactly $k$ cycles.

\section{Preliminaries}

The following result is an immediate consequence of the Perron-Frobenius Theorem.

\begin{theorem}\label{subgraph}
	Let $e$ be an edge of a connected graph $G$ and suppose that \ $G\setminus e$ is also connected. Then,
	$\rho(G\setminus e)>\rho(G).$
\end{theorem}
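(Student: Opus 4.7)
My plan is to exploit the fact that deleting an edge from a connected graph (while keeping it connected) can only increase pairwise distances, together with the monotonicity of the Perron eigenvalue for nonnegative irreducible matrices.

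First I would compare the two distance matrices entrywise. For any two vertices $u,v$, every $u$-$v$ path in $G\setminus e$ is also a $u$-$v$ path in $G$, so $d_{G\setminus e}(u,v)\ge d_G(u,v)$, and hence $D(G\setminus e)\ge D(G)$ entrywise. Next I would check that the inequality is strict in at least one entry. If $e=xy$, then in $G$ we have $d_G(x,y)=1$, while in $G\setminus e$ any $x$-$y$ path must avoid the edge $e$ and therefore has length at least $2$. Thus $\bigl(D(G\setminus e)\bigr)_{xy}>\bigl(D(G)\bigr)_{xy}$ and the two matrices differ.

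To upgrade this entrywise inequality to a strict inequality of spectral radii, I would use the Perron-Frobenius monotonicity principle. Because $G$ is connected, $D(G)$ is a nonnegative irreducible matrix with a strictly positive Perron eigenvector $\mathbf{x}$ satisfying $D(G)\mathbf{x}=\rho(G)\mathbf{x}$. Using $\mathbf{x}>0$ componentwise together with $D(G\setminus e)\ge D(G)$ and the strict inequality at the $(x,y)$-entry, one obtains
\[
D(G\setminus e)\,\mathbf{x}\;\ge\;\rho(G)\,\mathbf{x},
\]
with strict inequality in the coordinate corresponding to vertex $x$ (and also $y$). A standard consequence of Perron-Frobenius applied to the irreducible nonnegative matrix $D(G\setminus e)$ (whose irreducibility follows from connectedness of $G\setminus e$) is that such a strict componentwise inequality forces $\rho(G\setminus e)>\rho(G)$.

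The argument is essentially a one-paragraph application of classical Perron-Frobenius theory, so I do not expect any real obstacle; the only point requiring care is ensuring that the irreducibility hypothesis is in place for the version of the monotonicity statement being invoked, which is why the assumption that $G\setminus e$ remains connected is essential.
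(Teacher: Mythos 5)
Your argument is correct and is precisely the standard Perron--Frobenius monotonicity argument that the paper itself invokes (the paper states this theorem as an immediate consequence of Perron--Frobenius without writing out the details you supply). The key points — entrywise domination $D(G\setminus e)\ge D(G)$ with strictness at the deleted edge's endpoints, irreducibility of the distance matrices of connected graphs, and the strict comparison of Perron roots — are exactly the intended justification.
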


\begin{lemma}\cite{stevan}\label{stevan} Let $v$ be a vertex of a nontrivial graph $G$. For $k,l\geq 0$, we denote by $G(v,k,l)$ the graph obtained from disjoint unions of $G$, $P_k$ and $P_l$ by adding edges between $v$ and one of the leaf vertices in both $P_k$ and $P_l$. If $k\geq l\geq 1$, then
$$\rho (G(v,k+1,l-1))>\rho (G(v,k,l)).$$
\end{lemma}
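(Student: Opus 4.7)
\emph{Proof Plan.} The plan is a Rayleigh-quotient comparison. Label the attached paths as $v\text{--}a_1\text{--}\cdots\text{--}a_k$ and $v\text{--}b_1\text{--}\cdots\text{--}b_l$, and observe that $G(v,k+1,l-1)$ is obtained from $G(v,k,l)$ by detaching the leaf $b_l$ from $b_{l-1}$ and reattaching it as a new leaf $a_{k+1}$ at $a_k$; this induces a natural bijection of vertex sets. Let $x$ be the positive unit Perron eigenvector of $D(G(v,k,l))$ with eigenvalue $\rho$. Transporting $x$ via the bijection, the Rayleigh inequality gives
\[
\rho(G(v,k+1,l-1))-\rho\;\geq\;x^{T}\Delta x,\qquad \Delta\;:=\;D(G(v,k+1,l-1))-D(G(v,k,l)),
\]
so it suffices to prove $x^{T}\Delta x>0$.

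The matrix $\Delta$ is supported on the row and column of $b_l\leftrightarrow a_{k+1}$, and a direct accounting of the distance changes gives $x^{T}\Delta x=2\,x(b_l)\,S$ where, with $\alpha=k+1-l\geq 1$,
\[
S\;=\;\alpha\!\!\sum_{w\in V(G)}\!x(w)\;+\;\sum_{i=1}^{k}(\alpha-2i)\,x(a_i)\;+\;\sum_{j=1}^{l-1}(\alpha+2j)\,x(b_j).
\]
The contributions from the base graph and from the $b_j$'s are manifestly positive, but the $a_i$-coefficients turn negative for $i>\alpha/2$, and by the monotonicity of the Perron entries along the long path these indices correspond to the largest $x(a_i)$. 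To push through positivity I would write down the Perron equations at the two leaves $a_k$ and $b_l$, subtract them to obtain
\[
\rho\bigl(x(a_k)-x(b_l)\bigr)\;=\;(k-l)\|x\|_1\;-\;2\sum_{i=1}^{k}i\,x(a_i)\;+\;2\sum_{j=1}^{l}j\,x(b_j),
\]
and substitute this back into $S$ (after first rewriting the partial sums in $S$ via $\|x\|_1$). The weighted $a_i$ and $b_j$ sums cancel and $S$ collapses algebraically to
\[
S\;=\;\rho\,x(a_k)\;+\;\|x\|_1\;-\;(\rho+k+l+1)\,x(b_l).
\]

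Strict positivity of this simplified $S$ is the main obstacle. The plan is to combine (i) the inequality $x(a_k)\geq x(b_l)$, which should follow from the transmission comparison $\sum_u d_G(a_k,u)-\sum_u d_G(b_l,u)=(k-l)(|V(G)|-1)\geq 0$ together with standard monotonicity arguments, with (ii) an upper bound of the form $x(b_l)\leq \|x\|_1/(k+l+1)$ drawn from the Perron equation at $b_l$, exploiting that the diameter-realizing distance $k+l$ from $b_l$ is achieved only at $a_k$ while all other vertices are strictly closer. The genuine difficulty is that the naive eccentricity bound $\rho\,x(b_l)\leq(k+l)(\|x\|_1-x(b_l))$ is not by itself sharp enough, so a refinement is needed---either by invoking the companion identity $\rho(x(a_k)+x(b_l))=(k+l)\|x\|_1+2\!\sum_{w\in V(G)\setminus\{v\}}d_G(w,v)x(w)$ obtained by adding the two Perron equations, or by inducting on $\min(k,l)$ to reduce to the base case $l=1$ where $S$ admits a direct comparison.
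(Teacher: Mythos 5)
Your reduction is set up correctly and your algebra checks out: with $\alpha=k+1-l$ the matrix $\Delta$ is supported on the row/column of $b_l$, $x^{T}\Delta x=2x(b_l)S$ with your $S$, and substituting the subtracted Perron equations at $a_k$ and $b_l$ does collapse $S$ to $\rho\,x(a_k)+\|x\|_1-(\rho+k+l+1)\,x(b_l)$. (Note the paper itself does not prove this lemma; it quotes it from the Stevanović--Ili\'c reference, so there is no in-paper proof to compare against.) The fatal problem is that the inequality your whole plan rests on, $S>0$, is simply false in general, so no refinement of steps (i) and (ii) can rescue it. Take $G=K_2$ with $v$ one endpoint and $k=l=2$, i.e.\ $G(v,2,2)$ is the spider with legs $2,2,1$. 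Normalizing the Perron vector so that $x(a_2)=x(b_2)=1$, one finds numerically $x(a_1)=x(b_1)\approx 0.727$, $x(v)\approx 0.590$, $x(w)\approx 0.884$, $\rho\approx 10.74$, hence $\|x\|_1\approx 4.93$ and
\[
S=\|x\|_1-(k+l+1)\,x(b_l)\approx 4.93-5<0 ,
\]
so $x^{T}\Delta x<0$ and the Rayleigh bound with the transported Perron vector of $G(v,k,l)$ certifies nothing (the lemma itself is still true here: $\rho$ of the spider $3,1,1$ is about $11.0$). Your ingredient (ii) is exactly what fails: when $k=l$ the Perron vector is symmetric in the two legs, the leaf of a long pendant path carries roughly $1.4$ times the average entry (this is visible already in the distance Perron vector of $P_n$), so $x(b_l)\le\|x\|_1/(k+l+1)$ breaks down as soon as the attached paths dominate the graph. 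Ingredient (i) is also not free: a larger transmission does not in general force a larger Perron entry, so ``standard monotonicity'' is not a citable step, although (i) is harmless compared with the failure of (ii).

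Structurally, this is not a removable technicality: any Rayleigh-quotient proof of $\rho(D')>\rho(D)$ must use a test vector for $D'$, and the only canonical choice is the Perron vector of $D$; the computation above shows that with your bijection this choice genuinely fails in the symmetric case $k=l$ (which the lemma explicitly includes, with strict inequality). This is consistent with Lemma~\ref{godsil} in the paper, whose conclusion for $k=l$ is only the weaker disjunction $\rho(G_{k,l})<\rho(G_{k+1,l-1})$ or $\rho(G_{k,l})<\rho(G_{k-1,l+1})$ --- precisely because this style of argument cannot by itself break the symmetry. So the proposal, as a plan, has a genuine gap at the decisive step: the case $k=l$ (and nearby cases) requires a different idea, e.g.\ a finer analysis of the Perron entries along the two legs or the argument of the original Stevanović--Ili\'c paper, rather than a sharper bound on $x(b_l)$, and your fallback suggestions (the companion identity, or induction on $\min(k,l)$) do not address this, since a single exchange step cannot be decomposed into other instances of the same lemma.
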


\begin{lemma}\cite{godsil}\label{godsil}
Let $u$ and $v$ be two adjacent vertices of a connected graph $G$ where $|E(G)|\geq 2$. For positive integers $k$ and $l$, let $G_{k,l}$ denote the graph obtained from $G$ by attaching paths of length $k$ at $u$ and length $l$ at $v$ with $u$ and $v$ being leaf vertices of the corresponding paths. If $k>l\geq 1$, then $\rho(G_{k,l})<\rho(G_{k+1,l-1})$; if $k=l\geq 1$, then $\rho (G_{k,l})<\rho (G_{k+1, l-1})$ or $\rho (G_{k,l})<\rho(G_{k-1,l+1})$.
\end{lemma}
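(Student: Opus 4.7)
The plan is to establish the lemma by a Rayleigh-quotient comparison through a natural bijection of vertex sets. Let $x$ denote the unit positive Perron eigenvector of $D(G_{k,l})$, label the two pendant paths as $u = u_0, u_1, \ldots, u_k$ and $v = v_0, v_1, \ldots, v_l$, and define the bijection $\phi : V(G_{k,l}) \to V(G_{k+1,l-1})$ that sends $v_l \mapsto u_{k+1}$ and fixes every other vertex. A key structural observation is that the leaf-removed graphs $G_{k,l} \setminus v_l$ and $G_{k+1,l-1} \setminus u_{k+1}$ are isomorphic (both equal $G_{k, l-1}$), so pairs of vertices not involving $v_l$ retain identical distances under $\phi$. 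Setting $y(\phi(w)) = x(w)$ yields a unit vector on $V(G_{k+1,l-1})$, so by Rayleigh's inequality it suffices to show
\[
y^T D(G_{k+1,l-1}) y - \rho(G_{k,l}) = 2\,x(v_l)\,\Sigma > 0,
\]
where $\Sigma = \sum_{w \neq v_l}\bigl[d_{G_{k+1,l-1}}(u_{k+1}, \phi(w)) - d_{G_{k,l}}(v_l, w)\bigr] x(w)$.

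A direct distance computation yields the coefficient of $x(w)$ inside $\Sigma$: it equals $(k+1-l) + d_G(u,w) - d_G(v,w)$ for $w \in V(G)$, which is at least $k - l > 0$ by the triangle inequality $|d_G(u,w) - d_G(v,w)| \leq d_G(u,v) = 1$; it equals $(k-l) + 2 + 2j$ for $w = v_j$ with $1 \leq j \leq l-1$, strictly positive; and it equals $k - l - 2i$ for $w = u_i$ with $1 \leq i \leq k$, of mixed sign. The hypothesis $|E(G)| \geq 2$ is essential here: it guarantees $V(G) \setminus \{u, v\} \neq \emptyset$, so that the $V(G)$-sum contributes genuinely positive mass. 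When $G = K_2$ both graphs are isomorphic paths and the conclusion would in fact fail.

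The principal obstacle is controlling the possibly negative $u_i$-contribution. My approach is to exploit monotonicity of the Perron entries along the pendant $u$-path. Subtracting the eigenvector equations $\rho\,x(u_i) = \sum_{w} d(u_i, w)x(w)$ at two consecutive path vertices gives
\[
\rho(G_{k,l})\bigl(x(u_i) - x(u_{i-1})\bigr) = 2\sum_{j=0}^{i-1} x(u_j) - \sum_{j=0}^{k} x(u_j) + \sum_{w \notin \{u_0, \ldots, u_k\}} x(w),
\]
from which positivity of $x$ forces $x(u_i)$ to be nondecreasing past a threshold index, and in particular to grow toward the leaf. An Abel-summation style estimate then shows that the negative $u_i$-mass (concentrated on small $i$ where $x(u_i)$ is smallest) is outweighed by the uniformly positive $V(G)$ and $v_j$ contributions, yielding $\Sigma > 0$ and hence $\rho(G_{k+1,l-1}) > \rho(G_{k,l})$.

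For the equality case $k = l$, the configuration is symmetric under interchanging the roles of $u$ and $v$, so at least one of $x(u_k) \geq x(v_l)$ or $x(v_l) \geq x(u_k)$ must hold. In the first case, the argument above applied to the shift $(k,l) \to (k+1, l-1)$ yields $\rho(G_{k+1,l-1}) > \rho(G_{k,l})$; in the second case the symmetric shift $(k,l) \to (k-1, l+1)$ gives the strict increase, establishing the disjunction claimed in the lemma.
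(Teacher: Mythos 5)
First, note that the paper does not prove this lemma at all: it is quoted verbatim from the cited reference of Zhang and Godsil, so there is no in-paper argument to compare yours against. Judged on its own terms, your proposal has the right general framework (transplanting the Perron vector of $D(G_{k,l})$ along the bijection $v_l\mapsto u_{k+1}$ and invoking the Rayleigh quotient), and your distance bookkeeping is correct: the coefficient of $x(w)$ in $\Sigma$ is indeed $(k+1-l)+d_G(u,w)-d_G(v,w)\geq k-l$ for $w\in V(G)$, $(k-l)+2+2j$ for $w=v_j$, and $k-l-2i$ for $w=u_i$. The observation about $|E(G)|\geq 2$ and the failure for $G=K_2$ is also correct.

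The gap is in the one step that actually matters, the positivity of $\Sigma$, and your stated reason for it is backwards. The coefficients $k-l-2i$ are \emph{most negative at large} $i$ (down to $-(k+l)$ at $i=k$), and your own convexity identity $\rho\bigl(x(u_i)-x(u_{i-1})\bigr)=2\sum_{j<i}x(u_j)-\sum_{j\leq k}x(u_j)+\sum_{w\notin\{u_0,\dots,u_k\}}x(w)$ shows $x(u_i)$ is eventually increasing toward the leaf, so the Perron mass on the long pendant path is \emph{largest} precisely where the coefficients are most negative --- not ``concentrated on small $i$ where $x(u_i)$ is smallest,'' as you assert. The Abel-summation estimate is never carried out, and it is exactly the confrontation between the term $-(k+l)x(u_k)$ (with $x(u_k)$ typically the largest coordinate) and the positive contributions from $V(G)$ and the $v_j$ that needs a quantitative argument; nothing you have written controls it. The $k=l$ case is in even worse shape: there the $V(G)$-coefficients are only $\geq 0$ (zero at $w=u$) and every $u_i$-coefficient $-2i$ is strictly negative, the dichotomy ``$x(u_k)\geq x(v_l)$ or $x(v_l)\geq x(u_k)$'' is a triviality rather than a consequence of symmetry, and you never explain how that comparison enters the estimate. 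To close the argument you would need either a genuine lower bound on $\sum_{w\in V(G)}x(w)$ relative to the path entries (this is where $|E(G)|\geq 2$ must be used quantitatively, not just to guarantee a nonempty sum), or the more delicate strategy used in the literature for such path-shifting lemmas, which compares entries of the Perron vector across the two pendant paths via the eigen-equations rather than transplanting a single test vector.
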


The \textit{Saw graph} $S(p,q;l)$ is the cactus graph of order $2p+2q+l+1$ and size $3p+3q+l$ which is obtained from a path graph on $p+q+l$ edges by replacing $p$ consecutive edges including one pendant edge by $p$ triangles and replacing $q$ consecutive edges including the other pendant edge by $q$ triangles, see Fig~\ref{saw_picture}. The following result follows from the proof of Theorem 5.3 in \cite{cacti}.

\begin{theorem}\cite{cacti}\label{cacti}
If $G$ is a graph with maximal distance spectral radius in $\mathcal{C}(n,k)$, then $G\cong S(p,q;l)$ where $p+q=k$ and $l=n-2k-1$.
\end{theorem}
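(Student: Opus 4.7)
The plan is to start with an extremal graph $G\in\mathcal{C}(n,k)$ and funnel it through three structural reductions, each of which strictly increases $\rho$ unless the local configuration is already as required. Since $|E(G)|=n-1+k$ is fixed by $(n,k)$, Theorem~\ref{subgraph} cannot be applied directly to drop edges; all transformations will be vertex/edge rearrangements that preserve both the order and the cycle count.

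\textbf{Stage 1: every cycle of $G$ is a triangle.} Suppose some cycle $C=v_0v_1\cdots v_{r-1}v_0$ of $G$ has length $r\ge 4$. Because $G$ is a cactus, deleting $E(C)$ splits $G$ into blocks $H_0,\dots,H_{r-1}$ with $v_i\in H_i$. I would form $G'$ from $G$ by removing the edge $v_0v_{r-1}$ and adding the edge $v_0v_2$; the result is a triangle on $\{v_0,v_1,v_2\}$ with a pendant path $v_2v_3\cdots v_{r-1}$ carrying the same blocks $H_i$, so $G'\in\mathcal{C}(n,k)$. Comparing $D(G)$ and $D(G')$ entrywise on pairs $(x,y)$ with $x\in H_i$, $y\in H_j$ shows that every distance weakly increases (cycle geodesics shrink only when they used the short arc of $C$, but the new triangle still provides that shortcut for $v_0,v_1,v_2$), and at least one distance, e.g.\ $d(v_0,v_{r-1})$, strictly increases. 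A Rayleigh-quotient argument with the Perron vector of $G$ then gives $\rho(G')>\rho(G)$, a contradiction.

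\textbf{Stage 2: the $k$ triangles sit on a common spine.} Now every cycle is a triangle, and $G$ is a tree of triangles and bridges glued at cut vertices. I would apply Lemma~\ref{stevan} iteratively: at any cut vertex $v$ from which two disjoint pendant paths of lengths $k\ge l\ge 1$ emanate (paths possibly including triangle chains treated as "attached" branches), rebalancing to $(k+1,l-1)$ increases $\rho$. Repeating until no branching remains forces all vertices of degree $\ge 3$ to lie on a single path $P$ (the spine), and each of the $k$ triangles to be attached by sharing one edge with $P$ or by identifying one vertex with a consecutive triangle in a chain.

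\textbf{Stage 3: concentrate the triangles at both ends.} With a spine and $k$ triangles attached along it, I would use Lemma~\ref{godsil} as a swap tool on adjacent edges of the spine: given two consecutive spine-edges $uv,vw$ where $vw$ carries a triangle and $uv$ does not, an argument in the style of Lemma~\ref{godsil} (attaching the two pendant sides of the configuration to $u$ and $w$ respectively) shows that shifting the triangle outward strictly increases $\rho$ unless the outside is already occupied. Iterating pushes all triangles to the two pendant ends of the spine, leaving exactly $p$ triangles at one end, $q$ at the other with $p+q=k$, and a plain path of the remaining $l=n-2k-1$ edges in between. This is precisely $S(p,q;l)$.

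\textbf{Main obstacle.} The genuinely nontrivial step is Stage~1, because replacing a long cycle by a triangle-plus-path is a global change rather than a local edge swap, and the distance comparison must be carried out for all pairs of vertices simultaneously; the $D(G')\ge D(G)$ inequality is intuitive but needs careful casework on which geodesics in $G$ used each arc of $C$. Stages~2 and~3 are largely routine applications of Lemmas~\ref{stevan} and~\ref{godsil}, with the bookkeeping complication that the ``pendant paths'' in those lemmas must be replaced by their triangle-chain analogues, which requires small generalizations of the two lemmas (or a reduction of the chain configuration to the pure-path case by an auxiliary swap).
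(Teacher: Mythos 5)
First, note that the paper does not prove this statement at all: it is imported from the cacti reference, where it ``follows from the proof of Theorem 5.3'' there. So your proposal has to stand on its own, and Stage~1 contains a genuine error rather than just a hard technical step. After your surgery (delete $v_0v_{r-1}$, add the chord $v_0v_2$) the distance between $v_0$ and $v_2$ drops from $2$ to $1$, and consequently every pair of vertices with one vertex in $H_0$ and one in $H_2$ gets strictly closer; the claimed entrywise inequality $D(G')\ge D(G)$ is therefore false. This is not repairable by a different choice of surgery: since $|E(G)|=n-1+k$ is forced by membership in $\mathcal{C}(n,k)$, any transformation that stays inside the class must add an edge for every edge it removes, and an added edge strictly decreases at least one distance. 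So no Stage~1 move can be entrywise monotone, and the Perron--Frobenius/Rayleigh-quotient argument cannot be run off a domination $D(G')\ge D(G)$. One is forced to show $x^{T}\bigl(D(G')-D(G)\bigr)x>0$ for a difference matrix with entries of both signs, bounding the negative contributions via the eigenequation $D(G)x=\rho(G)x$ --- exactly the technique used in Lemmas~\ref{broom_kite_lemma}, \ref{saw_3_0}, \ref{saw_2_1}, \ref{G1_lemma}--\ref{myciel_lem2} of this paper and in the source of the theorem. Your proposal identifies Stage~1 as the main obstacle but misdiagnoses it as delicate casework on geodesics; the premise itself fails.

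Stages~2 and~3 have the weakness you already concede, but it is larger than ``bookkeeping'': Lemma~\ref{stevan} and Lemma~\ref{godsil} are proved only for pendant \emph{paths}, and the ``triangle-chain analogues'' you invoke are statements of essentially the same nature and difficulty as the theorem being proved (shifting a triangle along a spine again changes distances in both directions, so it again requires a signed Perron-vector estimate, not a corollary of the path lemmas). As written, the argument reduces the theorem to several unproved comparison lemmas plus one false inequality, so it does not constitute a proof.
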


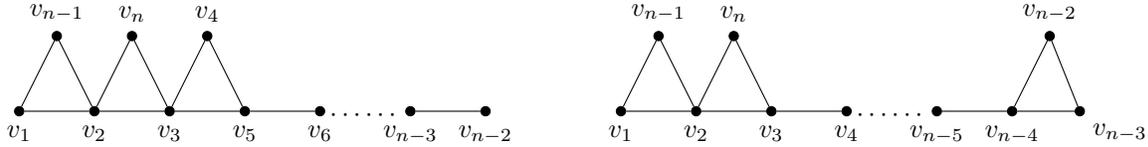
\begin{figure}[h!]
\begin{tikzpicture}
[scale=1, vertices/.style={draw, fill=black, circle, minimum size = 10pt, inner sep=2pt}, another/.style={draw, fill=black, circle, minimum size = 3.5pt, inner sep=0.1pt}]
\node[another, label=below:{$v_1$}] (v1) at (0,0) {};
\node[another, label=below:{$v_2$}] (v2) at (1,0) {};
\node[another, label=below:{$v_3$}] (v3) at (2,0) {};
\node[another, label=below:{$v_5$}] (v5) at (3,0) {};
\node[another, label=below:{$v_6$}] (v6) at (4,0) {};
\node[another, label=above:{$v_{n-1}$}] (a) at (0.5,1) {};
\node[another, label=above:{$v_{n}$}] (b) at (1.5,1) {};
\node[another, label=above:{$v_{4}$}] (v4) at (2.5,1) {};
\node[another, label=below:{$v_{n-3}$}] (c) at (5.2,0) {};
\node[another, label=below:{$v_{n-2}$}] (d) at (6.2,0) {};
\foreach \to/\from in {v1/v2, v2/v3, v1/a, v3/b, v2/a, v2/b, c/d, v3/v5, v4/v3, v4/v5, v5/v6}
\draw [-] (\to)--(\from);
\node[label=above:{$\cdots \cdots$}](1) at (4.6,-.38){};
\node[another, label=below:{$v_1$}] (u1) at (8,0) {};
\node[another, label=below:{$v_2$}] (u2) at (9,0) {};
\node[another, label=below:{$v_3$}] (u3) at (10,0) {};
\node[another, label=below:{$v_4$}] (u4) at (11,0) {};
\node[another, label=below:{$v_{n-5}$}] (n5) at (12.2,0) {};
\node[another, label=above:{$v_{n-1}$}] (a') at (8.5,1) {};
\node[another, label=above:{$v_{n}$}] (b') at (9.5,1) {};
\node[another, label=below:{$v_{n-4}$}] (c') at (13.2,0) {};
\node[another, label=below right:{$v_{n-3}$}] (d') at (14.1,0) {};
\node[another, label=above:{$v_{n-2}$}] (e') at (13.7,1) {};
\foreach \to/\from in {u1/u2, u2/u3, u1/a', u3/b', u2/a', u3/u4, u2/b', c'/d', c'/e', d'/e', n5/c'}
\draw [-] (\to)--(\from);
\node[label=above:{$\cdots \cdots$}](1) at (11.6,-.38){};
\end{tikzpicture}
\caption{The Saw graphs $S(3,0;n-7)$ (left) and $S(2,1;n-7)$ (right).}
\label{saw_picture}
\end{figure}

The {\it Broom tree} $B_{\Delta}^{(n)}$ is the tree obtained from a path graph $P_{n-\Delta +1}$ by attaching $\Delta -1$ pendant edges at an arbitrary leaf vertex of the path $P_{n-\Delta +1}$, see Fig~\ref{broom_picture}. In \cite{stevan}, it was shown that for every $\Delta > 2$,
\begin{equation}\label{broom_chain}
\rho (B_{\Delta}^{(n)})<\rho (B_{\Delta -1}^{(n)}).
\end{equation}

\begin{theorem}\cite{stevan}\label{broom_thm}
Let $T$ be a tree of order $n$ with $\Delta (T)=\Delta$ and $T\ncong B_{\Delta}^{(n)}$.  Then, $$\rho (T)< \rho (B_{\Delta}^{(n)}).$$ 
\end{theorem}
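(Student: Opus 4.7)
The plan is to show that for any tree $T$ of order $n$ with $\Delta(T) = \Delta$ and $T \not\cong B_\Delta^{(n)}$, one can produce a finite sequence of trees $T = T_0, T_1, \ldots, T_m = B_\Delta^{(n)}$, each of order $n$ and maximum degree $\Delta$, with $\rho(T_i) < \rho(T_{i+1})$ for all $i$. Each step will be a single application of Lemma~\ref{stevan}, which supplies the strict inequality. Fix a vertex $v$ with $\deg_{T_0}(v) = \Delta$ and view every $T_i$ as rooted at $v$. The reduction proceeds in two phases. (The case $\Delta = 2$ is trivial since then $T \cong P_n \cong B_2^{(n)}$, so assume $\Delta \geq 3$.)

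\emph{Phase 1: destroy branching away from $v$.} While $T_i$ contains a vertex $w \neq v$ with $\deg_{T_i}(w) \geq 3$, choose $w$ so that no descendant of $w$ in the rooted tree is branching; then every child subtree of $w$ is a path, since a tree in which every vertex has degree at most $2$ is a path. Let these child subtrees be $P_{\ell_1}, \ldots, P_{\ell_t}$ with $\ell_1 \geq \cdots \geq \ell_t \geq 1$ and $t = \deg_{T_i}(w) - 1 \geq 2$. Take $G$ to be $T_i$ with the two child subtrees $P_{\ell_1}, P_{\ell_2}$ removed together with their attaching edges; since $G$ still contains $v$ and $w$, it is nontrivial. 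Applying Lemma~\ref{stevan} at $w$ with $(k,l) = (\ell_1, \ell_2)$ gives a new tree whose two modified branches at $w$ have vertex counts $\ell_1 + 1, \ell_2 - 1$, and $\rho$ strictly increases. Iterating $\ell_2$ times absorbs the branch $P_{\ell_2}$ entirely into $P_{\ell_1}$, lowering $\deg(w)$ by one; further iterations absorb $\ell_3, \ldots, \ell_t$ in the same way, until $w$ has only one child path and $\deg(w) = 2$. Throughout Phase 1 we never touch an edge incident to $v$, so $\deg(v) = \Delta$ is preserved, and no other vertex's degree is increased. After finitely many such reductions we reach a tree $T'$ in which $v$ is the unique vertex of degree at least $3$, so $T'$ is $v$ with $\Delta$ paths attached.

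\emph{Phase 2: unbalance the branches at $v$.} Let the branch vertex counts in $T'$ be $\ell_1 \geq \cdots \geq \ell_\Delta \geq 1$. While $\ell_2 \geq 2$, apply Lemma~\ref{stevan} at $v$ with $(k,l) = (\ell_1, \ell_2)$, replacing these two branches by paths of vertex counts $\ell_1 + 1$ and $\ell_2 - 1 \geq 1$. Since $\ell_2 - 1 \geq 1$, no branch at $v$ disappears and $\deg(v) = \Delta$ is preserved. The quantity $\sum_i \ell_i^2$ strictly increases at every step while the sum $\sum_i \ell_i = n-1$ is fixed, so the process terminates; at termination at most one branch has length $\geq 2$, and balancing the vertex count forces one branch of length $n - \Delta$ together with $\Delta - 1$ pendant edges at $v$. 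This is precisely $B_\Delta^{(n)}$. Composing the strict inequalities from the two phases yields $\rho(T) < \rho(B_\Delta^{(n)})$, and at least one step is actually performed because $T \not\cong B_\Delta^{(n)}$.

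The main obstacle is not any single invocation of Lemma~\ref{stevan} but the bookkeeping of the reduction: one must verify that the chosen ``deepest'' branching vertex really has only path-subtrees below it (so the lemma applies, with the relevant attaching vertex automatically being a leaf of each $P_{\ell_i}$), that no reduction step lifts some other vertex's degree above $\Delta$, and that Phase 2 never accidentally erases a branch of $v$. Each of these is guaranteed by the choices described: the deepest-branching selection in Phase 1, leaving $v$ and its incident edges untouched in Phase 1, and the constraint $\ell_2 \geq 2$ in Phase 2. Combined with the strict monotonicity provided by Lemma~\ref{stevan}, this establishes the theorem.
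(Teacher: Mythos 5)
Your argument is correct. Note that the paper does not prove Theorem~\ref{broom_thm} at all --- it is imported by citation from the reference containing Lemma~\ref{stevan} --- so there is no in-paper proof to compare against; your two-phase reduction (first collapsing every branching vertex other than a fixed maximum-degree vertex $v$ by repeatedly sliding the deepest pendant paths into one another, then unbalancing the resulting spider at $v$) is essentially the argument of that cited source. The bookkeeping you flag as the main risk is handled correctly: choosing a deepest branching vertex guarantees its child subtrees are pendant paths so Lemma~\ref{stevan} applies, no step raises any degree above $2$ except at $w$ itself (whose degree only decreases), $\deg(v)=\Delta$ is never touched in Phase~1, the condition $\ell_2\geq 2$ keeps all $\Delta$ branches alive in Phase~2, and the potential $\sum_i \ell_i^2$ certifies termination at $B_\Delta^{(n)}$, with at least one strict inequality forced by $T\ncong B_\Delta^{(n)}$.
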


\begin{figure}[h!]
\begin{center}
\begin{tikzpicture}
[scale=1, vertices/.style={draw, fill=black, circle, minimum size = 10pt, inner sep=2pt}, another/.style={draw, fill=black, circle, minimum size = 3.5pt, inner sep=0.1pt}]
\node[another, label=below:{$v_1$}] (v1) at (0,-.5) {};
\node[another, label=below right:{$v_3$}] (v3) at (1,0) {};
\node[another, label=below right:{$v_4$}] (v4) at (2,0) {};
\node[another, label=above:{$v_{n-1}$}] (a) at (0,.5) {};
\node[another, label=right:{$v_{n}$}] (b) at (1,1) {};
\node[another, label=below:{$v_{n-3}$}] (c) at (3.6,0) {};
\node[another, label=below right:{$v_{n-2}$}] (d) at (4.6,0) {};
\node[another, label=right:{$v_2$}] (v2) at (1,-1) {};
\foreach \to/\from in {v2/v3, v3/b, v3/a, v1/v3, v3/v4, c/d}
\draw [-] (\to)--(\from);
\node[label=above:{$\, \cdots \cdots$}](1) at (2.8,-.36){};
\end{tikzpicture}
\caption{The broom tree $B_{5}^{(n)}$.}
\label{broom_picture}
\end{center}
\end{figure}
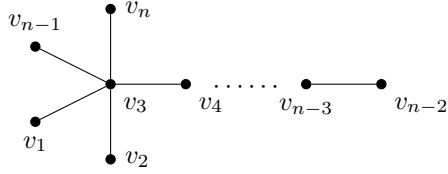

The {\it Perron vector} of a graph $G$ is the unique normalized eigenvector with positive coordinates which corresponds to the eigenvalue $\rho (G)$. In \cite{chromDist}, it was shown that  the coordinates of the Perron vector of a graph corresponding to its twin vertices are equal.

\section{Reduction to critical graphs with attached paths}

\begin{lemma}\label{k_critical_lemma}
If $G$ is a graph maximizing the distance spectral radius in $\mathcal{G}_{k,n}$, then $G$ has a $k$-critical subgraph $H$ and $G$ is obtained from $H$ by attaching paths at vertices of $H$. Moreover, if $S$ is the subset of vertices of $H$ at which nontrivial paths are attached, then $S$ is an independent set of $H$.
\end{lemma}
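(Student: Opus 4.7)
The plan proceeds in three stages. \emph{Stage 1: Existence of $H$.} Since $\chi(G)=k$, a standard argument—remove edges of $G$ one at a time as long as the chromatic number stays at $k$, then delete isolated vertices—yields a $k$-critical subgraph $H\subseteq G$.

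\emph{Stage 2: Tree-attached structure.} I claim that every edge $e\in E(G)\setminus E(H)$ is a bridge of $G$. Otherwise $G\setminus e$ is connected, and since $H\subseteq G\setminus e$ we have $G\setminus e\in\mathcal{G}_{k,n}$; Theorem~\ref{subgraph} then gives $\rho(G\setminus e)>\rho(G)$, contradicting the maximality of $G$. Two consequences follow immediately: each component $C$ of $G\setminus H$ is a tree (any cycle in $C$ would supply a non-bridge edge outside $E(H)$), and each such component attaches to $H$ through exactly one edge (two attaching edges, combined with a path inside $C$ and a path inside $H$, would form a cycle containing non-bridge non-$H$ edges). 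Hence $G$ decomposes as $H$ with rooted trees hanging off some vertices of $H$.

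\emph{Stage 3: Paths, and independence of $S$.} If some vertex $v$ in the tree part has two pendant paths hanging off it, Lemma~\ref{stevan} produces a graph of strictly larger $\rho$ that still lies in $\mathcal{G}_{k,n}$; iterating at the deepest branching vertex collapses each attached subtree into a single pendant path, and the same move applied at a vertex $v\in V(H)$ merges multiple paths attached at $v$ into one. Finally, to establish that $S$ is independent, suppose for contradiction that $u,v\in S$ are adjacent in $H$ with attached nontrivial paths; then Lemma~\ref{godsil} (with the rest of $G$ serving as the connected base graph, which contains the edge $uv$ and at least two edges) yields a graph of strictly larger $\rho$ in $\mathcal{G}_{k,n}$, the required contradiction.

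The main point of care throughout is verifying that each transformation keeps the graph inside $\mathcal{G}_{k,n}$, i.e.\ preserves both connectedness and $\chi=k$. Connectedness is automatic because each move is either the deletion of a non-bridge or a redistribution of pendant-path lengths, and $\chi=k$ is automatic because $H$ is preserved in every step (pinning $\chi\geq k$) while the attached paths/trees are bipartite (keeping $\chi\leq k$).
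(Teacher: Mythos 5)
Your proposal is correct and follows essentially the same route as the paper: take a $k$-critical subgraph $H$, use Theorem~\ref{subgraph} to exclude any edge of $G$ outside $E(H)$ lying on a cycle (so $G$ is $H$ with attached trees), then apply Lemma~\ref{stevan} to force each attached tree to be a single pendant path and Lemma~\ref{godsil} to force the attachment vertices to be pairwise non-adjacent. Your bridge formulation and the explicit check that each transformation stays in $\mathcal{G}_{k,n}$ are just slightly more detailed versions of the same argument.
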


\begin{proof}
It is well known that every $k$-chromatic graph contains a $k$-critical subgraph. Let $H$ be a $k$-critical subgraph of an extremal graph $G$. If $G$ contains a cycle $C$ having an edge $e$ such that $e\in E(C)\setminus E(H)$, then $G\setminus e$ is a connected, $k$-chromatic graph on $n$ vertices. By Theorem~\ref{subgraph}, we get $\rho (G\setminus e)>\rho (G)$ but the latter contradicts with $G$ being an extremal graph in $\mathcal{G}_{k,n}$. Thus, an extremal graph $G$ must be obtained from a $k$-critical graph $H$ by attaching trees at some vertices of $H$. Let $T$ be a tree attached to $H$ at a vertex $v$. We shall show that $T$ must be a path graph and $v$ is a leaf vertex of $T$. Suppose on the contrary that either $T$ is  not a path graph or $deg_T(v)\geq 2$. Now, there exist a vertex $u$  of $T$ such that $T\setminus u$ has at least two components which are paths, say $P_k$ and $P_l$ where $k\geq l\geq 1$. Let $G'=G\setminus \left(V(P_k)\cup V(P_l)\right)$, then $G=G'(u,k,l)$. By Lemma~\ref{stevan}, we have $\rho (G'(u,k+1,l-1))> \rho (G'(u,k,l))$. Note that $G'(u,k+1,l-1)$ is a graph in $\mathcal{G}_{k,n}$ and the latter inequality contradicts with $G$ being an extremal graph. 

If $H$ has two adjacent vertices $x$ and $y$ such that two nontrivial paths are attached to $H$ at each of $x$ and $y$, then we can apply Lemma~\ref{godsil} to obtain another graph in $\mathcal{G}_{k,n}$ which has larger distance spectral radius than $G$. Thus, if $S$ is the subset of vertices of $H$ at which nontrivial paths are attached, then all vertices in $S$ are non-adjacent to each other, that is, $S$ is an independent set of $H$.
\end{proof}

\begin{corollary}\label{3chromatic}
	The kite graph $K_{3}^{(n)}$ is the unique graph maximizing the distance spectral radius in the family $\mathcal{G}_{3,n}$.
\end{corollary}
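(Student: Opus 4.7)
The plan is to combine Lemma~\ref{k_critical_lemma} with Theorem~\ref{cacti}. First I would recall the classical fact that the $3$-critical graphs are exactly the odd cycles $C_{2k+1}$ for $k\geq 1$: indeed, a $3$-critical graph $H$ satisfies $\delta(H)\geq 2$ and must contain an odd cycle, and one checks that no proper odd cycle can be a subgraph without violating criticality, so $H$ itself is an odd cycle.

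With that in hand, Lemma~\ref{k_critical_lemma} says that any extremal graph $G\in\mathcal{G}_{3,n}$ is obtained from some $C_{2k+1}$ by attaching paths at an independent subset $S$ of its vertices, with the paths being vertex-disjoint except for their roots on the cycle. Such a graph is a connected cactus with exactly one cycle, so $G\in\mathcal{C}(n,1)$. Conversely, $K_3^{(n)}\in\mathcal{C}(n,1)\cap\mathcal{G}_{3,n}$ because it is built from the triangle $C_3$ (which is $3$-chromatic) with a single path attached.

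Next I would invoke Theorem~\ref{cacti} with $k=1$: the unique graph maximizing $\rho$ in $\mathcal{C}(n,1)$ is $S(p,q;l)$ with $p+q=1$ and $l=n-3$, which is precisely $K_3^{(n)}$. Since $G\in\mathcal{C}(n,1)$ and the extremal graph of $\mathcal{C}(n,1)$ lies inside $\mathcal{G}_{3,n}$, the maxima over the two families coincide. Uniqueness in Theorem~\ref{cacti} then forces $G\cong K_3^{(n)}$.

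There is no real obstacle here: the only point that could trip one up is verifying that the extremal cactus supplied by Theorem~\ref{cacti} happens to sit inside $\mathcal{G}_{3,n}$, which is immediate because its unique cycle is $C_3$. Hence the reduction in Lemma~\ref{k_critical_lemma} together with the extremal cactus result gives the corollary with essentially no further computation.
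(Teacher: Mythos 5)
Your proposal is correct and follows essentially the same route as the paper: identify $3$-critical graphs as odd cycles, use Lemma~\ref{k_critical_lemma} to place the extremal graph in $\mathcal{C}(n,1)$, and then apply Theorem~\ref{cacti} with $k=1$, noting $S(1,0;n-3)\cong K_3^{(n)}\in\mathcal{G}_{3,n}$. The extra checks you mention (the attached paths forming a cactus, and the extremal cactus lying in $\mathcal{G}_{3,n}$) are exactly the implicit steps in the paper's argument, so there is nothing to add.
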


\begin{proof}
Let $G$ be an extremal graph in $\mathcal{G}_{3,n}$. It is well known that $3$-critical graphs are precisely the odd cycles. By Lemma~\ref{k_critical_lemma}, the graph $G$ belongs to $\mathcal{C}(n,1)$. By Theorem~\ref{cacti}, $\rho (S(1,0;n-3)) \geq \rho (G)$ with equality if and only if $G\cong S(1,0;n-3)$. Now the result follows since $S(1,0;n-3)\cong K_3^{(n)}$.
\end{proof}

\section{$4$-chromatic graphs $G$ with $\Delta (G)\geq 5$}

\begin{lemma}\label{broom_kite_lemma}
Let $n$ be an integer with $n\geq 6$, then $\rho(B_{5}^{(n)})<\rho (K_4^{(n)})$.
\end{lemma}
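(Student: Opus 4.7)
My proof plan centers on the Rayleigh quotient characterization. I will construct a positive vector $y$ on $V(K_4^{(n)})$ such that $y^{\top}D(K_4^{(n)})y > \rho(B_5^{(n)})\cdot y^{\top}y$, which by variational characterization of the Perron eigenvalue forces $\rho(K_4^{(n)}) > \rho(B_5^{(n)})$. To build $y$, start from the positive Perron eigenvector $x$ of $D(B_5^{(n)})$; by the twin-vertex property recorded in the preliminaries, the four pendants $v_1, v_2, v_{n-1}, v_n$ of $B_5^{(n)}$ share a common coordinate $\alpha$. Let $\beta$ denote the coordinate at the center $v_3$, and $\gamma_1,\dots,\gamma_{n-5}$ the coordinates along the path $v_4,\dots,v_{n-2}$. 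Define $y$ on $V(K_4^{(n)})$ via the bijection that sends the four pendants of $B_5^{(n)}$ to the $K_4$-cluster $\{v_1,v_2,v_{n-1},v_n\}$ of $K_4^{(n)}$ (each cluster vertex receives value $\alpha$), sends the center $v_3$ of $B_5^{(n)}$ to $v_3$ of $K_4^{(n)}$ (with value $\beta$), and identifies the remaining path vertices with their values $\gamma_j$; the $\ell^2$-norms agree, so $\|y\|^2 = \|x\|^2$.

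The main calculation is the pair-by-pair evaluation of $y^{\top}D(K_4^{(n)})y - x^{\top}D(B_5^{(n)})x$ as a sum of $(d_K - d_B)\,x_i x_j$ over unordered pairs. Most terms vanish: pairs inside the shared path, the pair from the $B_5^{(n)}$-center to the single pendant that maps to $v_2$ of $K_4^{(n)}$, and the center-to-path pairs all contribute zero. The surviving families are (i) the six pairs within the $K_4$-cluster, whose distance drops from $2$ to $1$ and contribute $-6\alpha^2$; (ii) the three pairs from $v_3$ of $K_4^{(n)}$ to the three $K_4$-twins $\{v_1,v_{n-1},v_n\}$, whose distance rises from $1$ to $2$ and contribute $+3\alpha\beta$; (iii) the $3(n-5)$ pairs between those three twins and the path vertices, each distance rising by $1$, contributing $+3\alpha S_0$, where $S_0 := \sum_{j=1}^{n-5}\gamma_j$. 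Collecting and doubling for ordered pairs, the total difference equals $6\alpha(\beta + S_0 - 2\alpha)$, so positivity of the Rayleigh difference reduces to the single inequality $\beta + S_0 > 2\alpha$.

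To establish this inequality for $n \ge 7$, I exploit the Perron equations of $B_5^{(n)}$. From $\rho\alpha = 6\alpha + \beta + S_0 + S_1$ and $\rho\beta = 4\alpha + S_1$ (where $S_1 := \sum_j j\gamma_j$), elimination yields $\beta + S_0 = (\rho-2)\alpha - \rho\beta$, so the desired inequality is equivalent to $(\rho-4)\alpha > \rho\beta$, i.e., $S_1 < (\rho-8)\alpha$. Since $\rho(B_5^{(n)})$ is increasing in $n$ and $\rho(B_5^{(6)}) = 4+\sqrt{21} > 8$, the factor $\rho - 8$ is positive; coupled with the eigenequation for $\gamma_1$, which after substitution reads $\rho\gamma_1 = (6-\rho)\alpha + 2(\rho+1)\beta$, and the corresponding equation for the far endpoint $\gamma_{n-5}$ (available because $n \ge 7$ forces at least two path vertices), a short manipulation delivers $S_1 < (\rho-8)\alpha$. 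The base case $n = 6$ is handled separately: here $B_5^{(6)} \cong K_{1,5}$, so $\rho(B_5^{(6)}) = 4+\sqrt{21}$ is exact, and the twin-reduced $4\times 4$ distance matrix of $K_4^{(6)}$ has characteristic polynomial $p(\lambda) = \lambda^4 - 2\lambda^3 - 48\lambda^2 - 76\lambda - 28$; using the relation $\mu^2 = 8\mu + 5$ satisfied by $\mu = 4+\sqrt{21}$, one computes $p(\mu) = -(6\mu + 3) < 0$, so the largest root of $p$ strictly exceeds $\mu$, giving $\rho(K_4^{(6)}) > \rho(B_5^{(6)})$.

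The main obstacle is verifying $(\rho-4)\alpha > \rho\beta$ uniformly for $n \ge 7$: the margin is narrow and the analogous bound genuinely reverses at $n = 6$, so the argument cannot rely on any crude Perron bound and must use the presence of at least two path vertices in $B_5^{(n)}$ in an essential way. This is also why the base case $n = 6$ must be handled by a separate explicit characteristic polynomial evaluation rather than folded into the general argument.
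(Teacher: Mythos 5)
Your setup is sound and, in fact, coincides with the paper's: transplanting the Perron vector of $B_5^{(n)}$ onto $K_4^{(n)}$, using the four-fold twin symmetry, and reducing everything to the single inequality $\beta+S_0>2\alpha$ (in the paper's notation, $-2x_1+\sum_{i=3}^{n-2}x_i>0$) is exactly the paper's argument, and your bookkeeping of which distances change is correct. Your $n=6$ base case is also correct: the quotient matrix of $D(K_4^{(6)})$ on the twin classes does have characteristic polynomial $\lambda^4-2\lambda^3-48\lambda^2-76\lambda-28$, and with $\mu^2=8\mu+5$ one indeed gets $p(\mu)=-(6\mu+3)<0$; and you are right that $\beta+S_0>2\alpha$ genuinely fails at $n=6$, so this case must be split off.

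The gap is the crux: your claim that for all $n\ge 7$ ``a short manipulation delivers $S_1<(\rho-8)\alpha$'' from the eigenequations at $\gamma_1$ and $\gamma_{n-5}$ is asserted, not proved, and it is not a routine consequence of those equations. The identities you list are correct (I verified $\rho\gamma_1=(6-\rho)\alpha+2(\rho+1)\beta$), but any combination of the equations at the leaves, the center and $\gamma_1$ only reproduces restatements of the target: e.g.\ substituting $S_1=\rho\beta-4\alpha$ into the $\gamma_1$-equation gives $\rho\gamma_1=4\alpha+(\rho+1)\beta-S_0$, and plugging this back turns ``$\beta+S_0>2\alpha$'' into itself — the argument is circular unless new quantitative input enters. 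Bringing in the far-endpoint equation $\rho\gamma_m=4(m+1)\alpha+m\beta+mS_0-S_1$ (with $m=n-5$) transforms the goal into the equivalent statement $\rho(\beta+\gamma_m)>(6m+8)\alpha$, which is not at all automatic: at $n=7$ it requires $\beta+\gamma_2>\tfrac{20}{\rho}\alpha\approx 1.69\,\alpha$ while the true value is only about $1.80\,\alpha$, so one needs sharp lower bounds on $\beta+\gamma_m$ relative to $\alpha$ (or on $\rho$ in terms of $n$) precisely in the range $7\le n\le 12$. This is exactly the range the paper does not handle by algebra — its coefficient-positivity computation works only for $n\ge 13$, and $6\le n\le 12$ are settled numerically in Table~1. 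So as written your plan is missing its main step; to complete it you must either supply the quantitative estimate $\rho(\beta+\gamma_{n-5})>(6(n-5)+8)\alpha$ for all $n\ge 7$ (which appears to require real work, not a short manipulation), or restrict the algebraic argument to large $n$ and verify the remaining small cases separately, as the paper does. (A minor further point: the monotonicity of $\rho(B_5^{(n)})$ in $n$, used to get $\rho>8$, should be justified, e.g.\ by noting that $K_{1,5}$ is an isometric subgraph of $B_5^{(n)}$ so its distance matrix is a principal submatrix.)
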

\begin{proof}
If $n\leq 12$, then the result follows from direct calculations in Table~\ref{table_saw_kite}. We may assume that $n\geq 13$. We consider the vertex labellings of $K_4^{(n)}$ and $B_{5}^{(n)}$ in Figures~\ref{kite_picture} and \ref{broom_picture} respectively.   Let $D=D(K_4^{(n)})$, $\rho(K_4^{(n)})=\rho$ and $D'=D(B_{5}^{(n)})$, $\rho(B_5^{(n)})=\rho '$. For the Perron vector $x$ of $B_5^{(n)}$, we have $$\frac{1}{2}x^T(D-D')x=-x_n(x_1+x_2+x_{n-1})-x_{n-1}(x_1+x_2)-x_1x_2+(x_n+x_{n-1}+x_1)\sum_{i=3}^{n-2}x_i.$$
Note that $x_1=x_2=x_{n-1}=x_n$, since  $v_1, v_2, v_{n-1}, v_n$ are twin vertices of $B_{5}^{(n)}$. So,
$$\frac{1}{2}x^T(D-D')x=-6x_1^2+3x_1\sum_{i=3}^{n-2}x_i=3x_1\left(-2x_1+\sum_{i=3}^{n-2}x_i\right).$$
By the eigenequation $D'x=\rho 'x$, we obtain
$\displaystyle \rho ' x_1=6x_1+\sum_{j=3}^{n-2}(j-2)x_j$ and
$$\rho ' x_i=4(i-2)x_1+\sum_{j=3}^{n-2}|i-j|\,x_j \quad  \text{for} \quad 3\leq i\leq n-2.$$
Now we find that 
\begin{eqnarray*}
\rho ' \left[-2x_1+\sum_{i=3}^{n-2}x_i \right] &=& \left[ -12+4\sum_{i=3}^{n-2}(i-2)\right]x_1+\sum_{j=3}^{n-2}\left[-2(j-2)+\sum_{i=3}^{n-2}|i-j| \right]x_j\\
&=&2(n-1)(n-6)x_1\, +\, \sum_{j=3}^{n-2}\left[\frac{1}{2}n^2-\left(\frac{3}{2}+j\right)n-3j+8+j^2 \right]x_j.
\end{eqnarray*}
It suffices to check that $p_j(n)=\frac{1}{2}n^2-\left(\frac{3}{2}+j\right)n-3j+8+j^2>0$ for every $n\geq 13$ and $3\leq j\leq n-2$. The roots of the quadratic $p_j(n)$ are 
$$\frac{3}{2}+j+\frac{1}{2}\sqrt{-55+36j-4j^2}\quad \text{and} \quad \frac{3}{2}+j-\frac{1}{2}\sqrt{-55+36j-4j^2}.$$
Observe that $p_j(n)$ has non-real roots when $j\geq 7.05$. Since $p_j(n)$ has positive leading coefficient, we obtain that $p_j(n)>0$ for all $n\geq 13$ when $j\geq 8$. Now we only need to consider the range $3\leq j\leq 7$. The maximum value of $f(j)=-55+36j-4j^2$ is achieved at the unique critical point of $j=9/2$ and it is equal to  $26$. So, the largest root of $p_j(n)$ is at most $\frac{3}{2}+j+\frac{\sqrt{26}}{2}$. It is easy to see that $\frac{3}{2}+j+\frac{\sqrt{26}}{2}<n$  because $3\leq j\leq 7$ and $n\geq 13$. It follows that $p_j(n)>0$ for all $n\geq 13$ and $3\leq j\leq n-2$. Thus, $\rho - \rho ' \geq x^T(D-D')x>0$.
\end{proof}

\begin{table}[H]
\begin{tabular}{ |c|c|c|c|c|c| }
\hline
$n$ & $\rho (S(3,0;n-7))$ & $\rho (S(2,1;n-7))$ & $\rho (B_5^{(n)})$ &$\rho (K_4^{(n)})$ \\
\hline
\hline
$6$ &          &          & $8.582$ &  $8.627$      \\
$7$ & $10.830$ & $10.830$ & $11.828$ & $12.727$ \\ 
$8$ & $14.462$ & $15.404$ & $16.090$ & $17.599$ \\ 
$9$ & $19.177$ & $20.784$ & $21.238$ & $23.219$ \\ 
$10$ &  $24.808$ & $26.940$ & $27.206$ & $29.575$ \\ 
$11$ & $31.279$ & $33.850$ & $33.959$ & $36.657$ \\ 
$12$ & $38.550$ & $41.503$ & $41.475$ & $44.460$ \\ 
\hline
\end{tabular}
\caption{Maximal distance spectral radii of certain saw, broom  and kite graphs limited to three decimals.}
\label{table_saw_kite}
\end{table}

\begin{lemma}\label{Delta5}
If $G$ is a connected graph of order $n$ with $\Delta (G)=\Delta \geq 5$, then $\rho (G) < \rho (K_4^{(n)})$.
\end{lemma}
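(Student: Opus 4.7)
The plan is to compare $G$ to a spanning tree with the same maximum degree and then chain the broom/kite inequalities already established in the preliminaries.

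First, pick a vertex $v$ realizing $\deg_G(v)=\Delta\geq 5$ and take a breadth-first-search spanning tree $T$ of $G$ rooted at $v$. Since all neighbors of $v$ in $G$ lie at BFS distance $1$, the root retains its full neighborhood in $T$, so $\deg_T(v)=\Delta$ and hence $\Delta(T)=\Delta$. Applying Theorem~\ref{subgraph} repeatedly to delete the edges in $E(G)\setminus E(T)$ one at a time (each deletion preserves connectivity) yields $\rho(G)\leq \rho(T)$, with equality only if $G=T$.

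Next, since $T$ is a tree of order $n$ with $\Delta(T)=\Delta$, Theorem~\ref{broom_thm} gives $\rho(T)\leq \rho(B_\Delta^{(n)})$. Iterating inequality~(\ref{broom_chain}) a total of $\Delta-5\geq 0$ times then produces $\rho(B_\Delta^{(n)})\leq \rho(B_5^{(n)})$. Finally, because $n\geq \Delta+1\geq 6$, Lemma~\ref{broom_kite_lemma} supplies the strict inequality $\rho(B_5^{(n)})<\rho(K_4^{(n)})$. Concatenating gives
\[
\rho(G)\,\leq\,\rho(T)\,\leq\,\rho(B_\Delta^{(n)})\,\leq\,\rho(B_5^{(n)})\,<\,\rho(K_4^{(n)}),
\]
and the strictness at the last step is enough to conclude.

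There is no genuine obstacle in this argument: every ingredient has already been recorded in the preliminaries, and the lemma is essentially an assembly statement. The only point requiring a moment of care is producing a spanning tree that preserves the maximum degree, which is why I would use the BFS tree rooted at a vertex of degree $\Delta$; rooting there guarantees the root keeps all of its neighbors as children.
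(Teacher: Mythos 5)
Your proof is correct and follows essentially the same route as the paper: reduce to a spanning tree that keeps the maximum degree at a vertex of degree $\Delta$, then chain Theorem~\ref{subgraph}, Theorem~\ref{broom_thm}, inequality~\eqref{broom_chain} and Lemma~\ref{broom_kite_lemma}. The only cosmetic difference is that you build the degree-preserving spanning tree via a BFS rooted at the maximum-degree vertex, while the paper takes a minimal connected spanning subgraph containing the star at that vertex; both devices serve the identical purpose.
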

\begin{proof}
Let $u$ be a vertex of maximum degree and $v_1,\dots , v_{\Delta}$ be the neighbors of $u$ in $G$. A minimal connected spanning subgraph of $G$ containing all edges $uv_i$ for $i\in \{1,\dots , \Delta \}$ is a spanning tree of $G$ with maximum degree $\Delta$. For such spanning tree $T$, we have $\rho (G)\leq \rho (T)$ by Theorem~\ref{subgraph} and $\rho (T)\leq \rho (B_{\Delta}^{(n)})$ by Theorem~\ref{broom_thm}. Also, $\rho (B_{\Delta}^{(n)})\leq \rho (B_5^{(n)})$ by the inequality in \eqref{broom_chain} and $\rho (B_5^{(n)}) < \rho (K_4^{(n)})$ by Lemma~\ref{broom_kite_lemma}. Thus, we obtain that $\rho (G) \leq \rho (T) \leq \rho (B_{\Delta}^{(n)})\leq \rho (B_5^{(n)}) < \rho (K_4^{(n)})$ and the result follows.
\end{proof}

\section{$4$-chromatic graphs with three cactus-type cycles}

\begin{lemma}\label{saw_3_0} For every $n\geq 7$, $\rho (S(3,0;n-7)) < \rho (K_4^{(n)}) $.
\end{lemma}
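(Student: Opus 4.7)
The plan is to follow the template of Lemma~\ref{broom_kite_lemma}. For $n \leq 12$ the desired inequality is directly witnessed by Table~\ref{table_saw_kite}, so only the range $n \geq 13$ requires argument. Label the vertices of $K_4^{(n)}$ and $S(3,0;n-7)$ as in Figures~\ref{kite_picture} and \ref{saw_picture}, set $D = D(K_4^{(n)})$, $D' = D(S(3,0;n-7))$, $\rho' = \rho(S(3,0;n-7))$, and let $x$ be the Perron vector of $S(3,0;n-7)$. By the Rayleigh-quotient characterization of the largest eigenvalue of a real symmetric matrix, it is enough to prove $x^T(D-D')x > 0$, since then $\rho(K_4^{(n)})\cdot x^Tx \geq x^T D x > x^T D' x = \rho'\cdot x^Tx$.

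With these two labellings the graphs agree on the edge set $\{v_1v_2, v_1v_{n-1}, v_2v_{n-1}, v_2v_3, v_2v_n, v_3v_4, v_4v_5\} \cup \{v_iv_{i+1}: 5 \leq i \leq n-3\}$, while $K_4^{(n)}$ carries the additional edges $v_1v_n, v_{n-1}v_n$ and $S(3,0;n-7)$ carries $v_3v_5, v_3v_n$. A routine shortest-path inspection then shows that $D-D'$ takes value $+1$ on each pair $(v_k, v_i)$ with $k \in \{1,2,3\}$ and $5 \leq i \leq n-2$; value $+2$ on each $(v_i, v_n)$ with $5 \leq i \leq n-2$; value $+1$ on $(v_3, v_n)$ and $(v_4, v_n)$; value $-1$ on $(v_1, v_n)$ and $(v_{n-1}, v_n)$; and vanishes elsewhere. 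Because $v_1$ and $v_{n-1}$ are twins in $S(3,0;n-7)$ (both with closed neighborhood $\{v_1, v_2, v_{n-1}\}$), their Perron coordinates agree, $x_1 = x_{n-1}$. Collecting the contributions above then yields
\[
\frac{1}{2}\, x^T(D-D')x \;=\; (x_1 + x_2 + x_3 + 2x_n)\sum_{i=5}^{n-2} x_i \;+\; x_n\bigl(x_3 + x_4 - 2x_1\bigr).
\]

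To handle the large sum, I would exploit the eigenequations $(D'x)_3 = \rho' x_3$ and $(D'x)_n = \rho' x_n$. Subtracting them (and again using $x_1 = x_{n-1}$) produces the clean identity
\[
(\rho' + 1)(x_n - x_3) \;=\; x_4 + \sum_{i=5}^{n-2} x_i,
\]
which in particular forces $x_n > x_3$. Substituting $\sum_{i=5}^{n-2} x_i = (\rho' + 1)(x_n - x_3) - x_4$ back into the previous display isolates a leading positive term of the form $(\rho' + 1)(x_n - x_3)(x_1 + x_2 + x_3 + 2x_n)$ against a residual whose only potentially negative contribution is $-2x_1x_n$ together with a few products involving $x_4$.

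The remaining and principal obstacle is to render this positivity unconditional. In the spirit of the polynomial bound on $p_j(n)$ appearing in the proof of Lemma~\ref{broom_kite_lemma}, I would use one further eigenequation, most naturally the one at $v_1$, to bound $x_1$ from above in terms of $x_n$ together with the path entries $x_3, x_4$ and $\sum_{i=5}^{n-2} x_i$. Combining this with the fact that $\rho'$ grows without bound as $n \to \infty$ should reduce the entire question to a polynomial-in-$n$ inequality with coefficients that are bounded ratios of Perron entries; this inequality will hold comfortably for $n \geq 13$, while the small values $n \in \{7,\dots,12\}$ are already handled by Table~\ref{table_saw_kite}.
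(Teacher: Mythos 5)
Your setup (Rayleigh quotient with the Perron vector of $S(3,0;n-7)$, small cases from Table~\ref{table_saw_kite}, twin vertices giving $x_1=x_{n-1}$) is the same as the paper's, but there are two problems. First, your entrywise description of $D-D'$ is incomplete: the row of $v_{n-1}$ also carries entries $+1$ against every path vertex $v_i$, $5\le i\le n-2$ (in $K_4^{(n)}$ the distance from $v_{n-1}$ to $v_i$ is $i-1$, while in the saw it is $i-2$). Consequently, using $x_1=x_{n-1}$, the correct coefficient of $\sum_{i=5}^{n-2}x_i$ in $\tfrac12 x^T(D-D')x$ is $2x_1+x_2+x_3+2x_n$ (as in the paper), not $x_1+x_2+x_3+2x_n$. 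The slip is benign in direction, since you only underestimate the quadratic form, but your displayed identity is false as stated.

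Second, and decisively, the positivity of the expression is never actually proved. Your identity $(\rho'+1)(x_n-x_3)=x_4+\sum_{i=5}^{n-2}x_i$ is correct, but substituting it back into the quadratic form is circular: the ``leading positive term'' $(\rho'+1)(x_n-x_3)(x_1+x_2+x_3+2x_n)$ is literally the same quantity $\bigl(x_4+\sum_{i=5}^{n-2}x_i\bigr)(x_1+x_2+x_3+2x_n)$ you started from, so nothing has been gained toward dominating the negative contribution $-2x_1x_n$. What is really needed is a comparison such as $\sum_{i=5}^{n-2}x_i>x_1$, which suffices both for your expression and for the correct one; that is exactly where the work lies, and your proposal only gestures at it. The plan ``use one further eigenequation at $v_1$, reduce to a polynomial-in-$n$ inequality with coefficients that are bounded ratios of Perron entries, which will hold comfortably for $n\ge 13$'' is a hope, not an argument --- in particular the boundedness of those ratios is itself unjustified. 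The paper closes precisely this gap: it expands $\rho'\bigl(-x_1+\sum_{j=5}^{n-2}x_j\bigr)=\sum_k f_k(n)\,x_k$ using the eigenequations at $v_1$ and at every path vertex $v_j$ with $5\le j\le n-2$, computes each coefficient explicitly (for path indices $f_k(n)=\tfrac12 n^2-(\tfrac32+k)n+k^2-4k+13$), and verifies $f_k(n)>0$ for all $n\ge 11$ by locating the real roots. Until you carry out an estimate of this kind, your proof of the key inequality is missing, so the argument is incomplete.
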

\begin{proof}
If $7\leq n\leq 10$, then the result follows from direct calculations in Table~\ref{table_saw_kite}. We may suppose that $n\geq 11$.
Let $D$ and $D'$ be the distance matrices of $K_4^{(n)}$ and $S(3,0;n-7)$ with respect to vertex orderings given in Figures~\ref{kite_picture} and \ref{saw_picture} respectively. Let also $x=(x_1, \dots , x_n)$ be the Perron vector of $D'$ and $\rho '=\rho (S(3,0;n-7))$. Note that
\[ D-D'=\begin{bmatrix}
    0 & 0 & 0 & 0 & 1 & \dots  & 1 & 0 & -1 \\
    0 & 0 & 0 & 0 & 1 & \dots  & 1 & 0 & 0 \\
    0 & 0 & 0 & 0 & 1 & \dots  & 1 & 0 & 1 \\
    0 & 0 & 0 & 0 & 0 & \dots  & 0 & 0 & 1 \\
    1 & 1 & 1 & 0 & 0 & \dots  & 0 & 1 & 2 \\
    \vdots & \vdots & \vdots & \vdots & \vdots & \ddots & \vdots & \vdots & \vdots \\
    1 & 1 & 1 & 0 & 0 & \dots & 0 & 1 & 2 \\
    0 & 0 & 0 & 0 & 1 & \dots & 1 & 0 & -1 \\
    -1 & 0 & 1 & 1 & 2 & \dots & 2 & -1 & 0
\end{bmatrix}.
\]
The vertices $v_1$ and $v_{n-1}$ of $S(3,0;n-7)$ are twin vertices, so $x_1=x_{n-1}$. Now we find that 
$$\frac{1}{2}x^T(D-D')x=(2x_1+x_2+x_3)\left(\sum_{j=5}^{n-2}x_j\right)+2x_n\left(-x_1+\sum_{j=5}^{n-2}x_j\right)+x_n(x_3+x_4).$$
 It suffices to check that $-x_1+\sum_{j=5}^{n-2}x_j>0$, as the Perron vector $x$ has positive coordinates. By the eigenequation $D'x=\rho ' x$, we get
 \begin{eqnarray*}
 \rho ' x_1 &=& x_1+x_2+2x_3+3x_4+2x_n+\sum_{k=5}^{n-2}(k-2)x_k\\
 \rho ' x_j &=& 2(j-2)x_1+(j-3)(x_2+x_n)+(j-4)(x_3+x_4)+\sum_{k=5}^{n-2}|k-j|x_k
 \end{eqnarray*}
 where $5\leq j\leq n-2$. Let us write $\displaystyle \rho ' \left(-x_1+\sum_{j=5}^{n-2}x_j\right)=\sum_{k\in\{1,\dots ,n-2,n\}}f_k(n)\,x_k$. Note that 
$f_1(n)=-1+2\sum_{j=5}^{n-2}(j-2)$, $f_2(n)=-1+\sum_{j=5}^{n-2}(j-3)$, $f_3(n)=-2+\sum_{j=5}^{n-2}(j-4)$, $f_4(n)=-1+f_3(n)$, $f_n(n)=-1+f_2(n)$. It is clear that $f_k(n)>0$ for $k\in\{1,2,3,4,n\}$ as $n\geq 11$. Also, for $5\leq k\leq n-2$, 
$$f_k(n)=-(k-2)+\sum_{j=5}^{n-2}|k-j|=\frac{1}{2}n^2-\left(\frac{3}{2}+k\right)n+k^2-4k+13.$$
Also, $f_k(n)$ is a quadratic polynomial in $n$ with a positive leading coefficient and the roots of $f_k(n)$ are 
$$r_1= \frac{3}{2}+k+\frac{1}{2}\sqrt{-4k^2+44k-95} \quad \quad \text{and} \quad \quad r_2=\frac{3}{2}+k-\frac{1}{2}\sqrt{-4k^2+44k-95}.$$
 Observe that $r_1$ and $r_2$ are non-real numbers unless $2.95\leq k\leq 8.05$, so $f_k(n)>0$ for all $n$ when $k\geq 9$. Now, we may suppose that $5\leq k\leq 8$. In this case it is easy to check that $n>r_1$ and $n>r_2$ because $n\geq 11$ by the assumption. Therefore, we get $f_k(n)>0$ for all $n\geq 11$ when $5\leq k\leq 8$.
\end{proof}

\begin{lemma}\label{saw_2_1} For every $n\geq 7$, $\rho (S(2,1;n-7) < \rho (K_4^{(n)})$.
\end{lemma}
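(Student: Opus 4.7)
I would follow the exact template of Lemma~\ref{saw_3_0}. The small cases $7 \leq n \leq 12$ are in Table~\ref{table_saw_kite}, so assume $n \geq 13$. Let $D = D(K_4^{(n)})$, $D' = D(S(2,1;n-7))$, and let $x = (x_1,\ldots,x_n)$ be the Perron vector of $D'$ with $\rho' = \rho(S(2,1;n-7))$, using the labellings of Figures~\ref{kite_picture} and~\ref{saw_picture} (right). The key preliminary observation is that $S(2,1;n-7)$ has \emph{two} pairs of closed twin vertices, namely $\{v_1, v_{n-1}\}$ and $\{v_{n-3}, v_{n-2}\}$; hence $x_1 = x_{n-1}$ and $x_{n-3} = x_{n-2}$, and both identities will be needed to collapse the cross-terms below.

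Next I would tabulate $D - D'$ entrywise. The nonzero entries of the upper triangle turn out to be sparse: on the clique side, $(1,n) = (n-1,n) = -1$ (the two lost $K_4$-edges through $v_n$); on the $v_{n-2}$ column, $(k,n-2) = 1$ for $k \in \{1,2\} \cup \{3,\dots,n-4\} \cup \{n-1\}$ (reflecting the shift of $v_{n-2}$ from path-end in $K_4^{(n)}$ to a twin of $v_{n-3}$ in $S$); and on the $v_n$ column, $(k,n) = 1$ for $3 \leq k \leq n-4$ together with $(n-3,n) = 1$ and $(n-2,n) = 2$ (reflecting that $v_n$ is a degree-$2$ triangle vertex adjacent to $v_3$ in $S$). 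Substituting into $\tfrac12 x^T(D-D')x$ and applying both twin identities, the expression collapses to
\[
\tfrac12\,x^T(D-D')x \;=\; x_{n-2}\Bigl(2x_1 + x_2 + \sum_{k=3}^{n-4} x_k\Bigr) + x_n\Bigl(-2x_1 + 3x_{n-2} + \sum_{k=3}^{n-4} x_k\Bigr).
\]
The first parenthesis is manifestly positive, so the whole task reduces to verifying
\[
T \;:=\; -2x_1 + 3x_{n-2} + \sum_{k=3}^{n-4} x_k \;>\; 0.
\]

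To settle $T > 0$ I would mimic the last step of Lemma~\ref{saw_3_0}: compute $\rho' T$ by applying $D'x = \rho'x$ to each of the three groups of coordinates in $T$ and regroup as $\rho' T = \sum_\ell h_\ell(n)\,x_\ell$. For $3 \leq \ell \leq n-4$ the coefficient simplifies to the quadratic
\[
h_\ell(n) \;=\; \ell^2 - (n+4)\ell + \tfrac12(n^2 - n + 4),
\]
whose discriminant in $\ell$ is $-n^2 + 10n + 8$, negative for $n \geq 11$; hence $h_\ell(n) > 0$ for every admissible $\ell$ once $n \geq 13$. The remaining coefficients (of $x_1$, $x_2$, $x_n$, and of the combined $x_{n-3}+x_{n-2}$) depend only on $n$ and are each easily checked to be positive for $n \geq 13$. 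Therefore $\rho' T > 0$, so $T > 0$, and consequently $x^T D x > x^T D' x = \rho'\,x^T x$; the Rayleigh characterisation of $\rho(K_4^{(n)})$ then yields $\rho(K_4^{(n)}) > \rho'$.

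The main obstacle is the bookkeeping for $D - D'$, which is noticeably fussier than in Lemma~\ref{saw_3_0}: both ends of the graph now differ between $K_4^{(n)}$ and $S(2,1;n-7)$, since the two left-hand triangles of $S$ share only the vertex $v_2$ (rather than forming a full $K_4$) and the terminal two path vertices of $K_4^{(n)}$ are replaced on the right by a triangle. Consequently many rows and columns of $D - D'$ shift simultaneously, and one has to be vigilant both about signs and about the order in which the two twin substitutions are applied before invoking the eigenequation. Once the quadratic form is in the displayed closed form, the rest is a routine quadratic analysis parallel to that in Lemma~\ref{saw_3_0}.
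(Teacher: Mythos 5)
Your proposal is correct and follows essentially the same route as the paper: the identical small-case table, the same twin-vertex identities, the same collapsed quadratic form and key quantity $T=3x_{n-2}-2x_1+\sum_{k=3}^{n-4}x_k$, and the same coefficient polynomial (your $h_\ell(n)$ equals the paper's $f_k(n)$), with only a cosmetic difference in the last step, where you show positivity via the discriminant in $\ell$ while the paper bounds the roots in $n$ (and you should also list the coefficient of $x_{n-1}$, which is just as easily seen to be positive or folded into that of $x_1$).
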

\begin{proof}
If $n\leq 12$, then the result follows from direct calculations in Table~\ref{table_saw_kite}. We may assume that $n\geq 13$.
Let $D$ and $D'$ be the distance matrices of $K_4^{(n)}$ and $S(2,1;n-7)$ with respect to vertex orderings given in Figures~\ref{kite_picture} and \ref{saw_picture} respectively. Let also $x=(x_1, \dots , x_n)$ be the Perron vector of $D'$ and $\rho '=\rho (S(2,1;n-7))$. Note that

\[ D-D'=\begin{bmatrix}
    0 & 0 & 0 & \dots  & 0 & 0 & 1 & 0 & -1 \\
    0 & 0 & 0 & \dots  & 0 & 0 & 1 & 0 & 0 \\
    0 & 0 & 0 & \dots  & 0 & 0 & 1 & 0 & 1 \\
    \vdots & \vdots & \vdots & \ddots & \vdots & \vdots & \vdots & \vdots & \vdots \\
    0 & 0 & 0 & \dots & 0 & 0 & 1 & 0 & 1\\
    0 & 0 & 0 & \dots & 0 & 0 & 0 & 0 & 1\\
    1 & 1 & 1 & \dots & 1 & 0 & 0 & 1 & 2\\
    0 & 0 & 0 & \dots & 0 & 0 & 0 & 0 & -1\\
    -1 & 0 & 1 & \dots & 1 & 1 & 2 & -1 & 0\\
\end{bmatrix}.
\]
 Observe that the pairs $v_1$, $v_{n-1}$ and $v_{n-2}$, $v_{n-3}$ of $S(2,1;n-7)$ are twin vertices, so $x_1=x_{n-1}$ and $x_{n-2}=x_{n-3}$. Now, 
$$\frac{1}{2}x^T(D-D')x=x_{n-2}\left( x_{n-1}+\sum_{j=1}^{n-4}x_j\right)+x_n\left(3x_{n-2}-2x_1+\sum_{j=3}^{n-4}x_j\right).$$
It suffices to only show that $\displaystyle 3x_{n-2}-2x_1+\sum_{j=3}^{n-4}x_j>0$.  By the eigenequation $D'x=\rho ' x$, 
\begin{eqnarray*}
\rho ' x_{n-2} &=& x_{n-3}+(n-4)x_{n-1}+(n-5)x_n +\sum_{k=1}^{n-4}(n-3-k)x_k \\
\rho ' x_1 &=& (n-4)x_{n-2}+x_{n-1}+2x_n +\sum_{k=2}^{n-3}(k-1)x_k \\
\rho ' x_j &=& (n-3-j)x_{n-2}+(j-1)x_{n-1}+(j-2)x_n +\sum_{k=1}^{n-3}|j-k|\, x_k\\
\end{eqnarray*}
where $3\leq j\leq n-4$. Let us write 
$$\rho ' \left( 3x_{n-2}-2x_1+\sum_{j=3}^{n-4}x_j \right)=\sum_{k=1}^n f_k(n) \, x_k.$$
Since $\rho ' >0$ and all $x_k$'s positive, it suffices to show that $f_k(n) >0$ for every $k\in \{1, \dots , n\}$. It is straightforward to check that $f_k(n) >0$ when $k\in\{1, n-3, n-2, n-1, n\}$, as $n\geq 13$. Suppose that $2\leq k\leq n-4$, then
$$
f_k(n) = 3(n-3-k)-2(k-1)+\sum_{j=3}^{n-4}|k-j|
= \frac{1}{2}n^2-\left( \frac{1}{2}+k\right)n-4k+2+k^2.
$$
The roots of the quadratic $f_k(n)$ are 
$$r_1=\frac{1}{2}+k+\frac{1}{2}\sqrt{-15+36k-4k^2} \quad \text{and}\quad r_2=\frac{1}{2}+k-\frac{1}{2}\sqrt{-15+36k-4k^2}. $$

Observe that $f_k(n)$ has no real roots when $k\geq 9$ because $-15+36k-4k^2<0$ for $k\geq 9$. Also,  when $2\leq k\leq 8$, it is straightforward to verify that $n>r_1$ and $n>r_2$, as $n\geq 13$. Thus, $f_k(n)>0$ when $n\geq 13$ and $k\geq 2$, and the result follows.
\end{proof}

\begin{lemma}\label{cactus_not_extr} If $G$ is a connected graph of order $n$ having three cactus-type cycles, then $\rho(G)< \rho (K_4^{(n)})$.
\end{lemma}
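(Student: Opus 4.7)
The plan is to reduce to the cactus case already handled in Lemmas~\ref{saw_3_0} and \ref{saw_2_1}. Denote the three cactus-type cycles by $G_1, G_2, G_3$. I would first exhibit a spanning connected subgraph $C$ of $G$ that belongs to $\mathcal{C}(n,3)$, and in fact whose three cycles are exactly $G_1, G_2, G_3$. Granting such a $C$, repeated application of Theorem~\ref{subgraph} gives $\rho(G)\leq\rho(C)$ (since edges in $E(G)\setminus E(C)$ can be removed one at a time while preserving connectivity via $C$), Theorem~\ref{cacti} yields $\rho(C)\leq\max\{\rho(S(3,0;n-7)),\rho(S(2,1;n-7))\}$, and Lemmas~\ref{saw_3_0} and \ref{saw_2_1} push this right-hand side strictly below $\rho(K_4^{(n)})$, which finishes the proof.

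To construct $C$, I would let $H$ be the edge-induced subgraph of $G$ on $E(G_1)\cup E(G_2)\cup E(G_3)$. The hypothesis that $H$ has exactly three simple cycles forces $G_i$ and $G_j$ to share at most one vertex for each pair: two common vertices together with internally disjoint pieces of $G_i$ and $G_j$ would form a theta subgraph, immediately producing at least a fourth simple cycle in $H$. I would then choose a spanning forest $F$ of $H$ by deleting one edge $e_i$ from each $G_i$, extend $F$ to a spanning tree $T$ of $G$ (possible because $G$ is connected and $F$ is a forest in $G$), and set $C:=T\cup\{e_1,e_2,e_3\}$, a spanning connected subgraph of $G$ with $n+2$ edges.

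The step I expect to be the main obstacle is verifying that $C$ really lies in $\mathcal{C}(n,3)$. Since $G_i-e_i\subseteq F\subseteq T$, the unique path in $T$ between the endpoints of $e_i$ is $G_i-e_i$, so the fundamental cycle of $e_i$ with respect to $T$ inside $C$ is precisely $G_i$; hence $G_1,G_2,G_3$ are edge-disjoint cycles forming a basis of the three-dimensional cycle space of $C$. Any other simple cycle of $C$ would be an edge symmetric difference of at least two of these basis cycles, and such a symmetric difference cannot be a simple cycle: a vertex shared by two of the $G_i$'s has degree four in the union, and if two $G_i$'s share no vertex their union is disconnected. Consequently $G_1,G_2,G_3$ are the only simple cycles of $C$, they pairwise share at most one vertex, and $C$ is a cactus in $\mathcal{C}(n,3)$. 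A brief sanity check also confirms $n\geq 7$, since the smallest cactus with three cycles already has seven vertices, so the saw-graph lemmas indeed apply across the whole range.
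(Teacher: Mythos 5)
Your proposal is correct and follows essentially the same route as the paper: pass to a connected spanning subgraph in $\mathcal{C}(n,3)$ whose cycles are the given cactus-type cycles, bound $\rho(G)$ by its distance spectral radius via Theorem~\ref{subgraph}, and then invoke Theorem~\ref{cacti} together with Lemmas~\ref{saw_3_0} and \ref{saw_2_1}. The only difference is that you spell out in detail (spanning tree plus fundamental cycles, pairwise intersections in at most one vertex) what the paper compresses into the one-line assertion that a minimal spanning subgraph containing the three cycles is a cactus in $\mathcal{C}(n,3)$.
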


\begin{proof}
Let $H$ a minimal spanning subgraph of $G$ containing three cactus-type cycles in $G$. So, $H$ is a cactus subgraph of $G$ which belongs to $\mathcal{C}(n,3)$.  By Theorem~\ref{subgraph}, we have $\rho (H)\geq \rho (G)$. By Theorem~\ref{cacti}, we have $\rho(H)\leq \rho (S(2,1;n-7)$ and $\rho (H)\leq \rho (S(3,0;n-7))$. Also, $\rho (K_4^{(n)})$ is larger than each of $\rho (S(3,0;n-7))$ and $\rho (S(2,1;n-7))$ by Lemmas~\ref{saw_3_0} and \ref{saw_2_1}. Thus, $\rho (G)\leq \rho (H)<\rho (K_4^{(n)})$ and the result follows.
\end{proof}

We say that a graph $G$ satisfies the {\it property} $\mathcal{P}$ if either $\Delta (G)\geq 5$ or $G$ contains three cactus-type cycles. By Lemmas~\ref{Delta5} and \ref{cactus_not_extr} we have the following:

\begin{corollary}\label{property_P} If $G$ is a graph of order $n$ satisfying the property $\mathcal{P}$, then $\rho(G)<\rho (K_4^{(n)})$
\end{corollary}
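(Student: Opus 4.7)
The plan is a one-line case analysis, since Corollary~\ref{property_P} is essentially a repackaging of the two preceding lemmas. By definition, a graph $G$ satisfying property $\mathcal{P}$ falls into (at least) one of two cases: either $\Delta(G) \geq 5$, or $G$ contains three cactus-type cycles. I would simply split on these cases and invoke the appropriate lemma in each.

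In the first case, when $\Delta(G) \geq 5$, Lemma~\ref{Delta5} directly yields $\rho(G) < \rho(K_4^{(n)})$. In the second case, when $G$ contains three cactus-type cycles, Lemma~\ref{cactus_not_extr} directly yields $\rho(G) < \rho(K_4^{(n)})$. Combining the two, we conclude $\rho(G) < \rho(K_4^{(n)})$ whenever $G$ satisfies $\mathcal{P}$.

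There is no real obstacle to overcome here: all of the technical work (the Perron-vector quadratic-form comparisons in Lemmas~\ref{broom_kite_lemma}, \ref{saw_3_0}, \ref{saw_2_1}, together with the spanning-subgraph reductions via Theorem~\ref{subgraph}, Theorem~\ref{broom_thm}, inequality~\eqref{broom_chain}, and Theorem~\ref{cacti}) has already been absorbed into Lemmas~\ref{Delta5} and \ref{cactus_not_extr}. The only thing left is to remark that the hypothesis of property $\mathcal{P}$ is precisely the disjunction of the hypotheses of those two lemmas, so the proof is a two-line appeal to them.
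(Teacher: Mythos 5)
Your proposal is correct and matches the paper exactly: the corollary is stated there as an immediate consequence of Lemmas~\ref{Delta5} and \ref{cactus_not_extr}, with no further argument needed, which is precisely your two-case appeal (connectedness of $G$ being implicit throughout, as the paper assumes for all distance-matrix statements).
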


\section{$4$-critical planar graphs with exactly four triangles}

The well known Gr{\"u}nbaum-Aksenov Theorem \cite{aksenov,grunbaum} says that every $4$-chromatic planar graph contains at least four triangles. In \cite{lidicky}, a characterization of planar $4$-critical graphs with exactly four triangles was given. In this section we will give this characterization and we follow the definitions given in \cite{lidicky}. Let diamond and tailed diamond graphs be as shown in  Fig.~\ref{diamonds}. We say that an edge $e$ is a {\it diamond edge} of a graph $G$ if $e$ belongs to exactly two triangles in $G$. A {\it diamond expansion} of a graph $G$ is defined as follows: delete some diamond edge $xy$ from $G$, and identify $x$ with a leaf vertex of the tailed diamond,  and identify $y$ with a degree $2$ vertex of the tailed diamond graph.

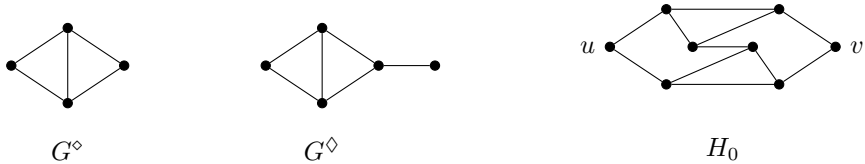
\begin{figure}[h!]
\begin{center}
\begin{tikzpicture}
[scale=1, vertices/.style={draw, fill=black, circle, minimum size = 10pt, inner sep=2pt}, another/.style={draw, fill=black, circle, minimum size = 3.5pt, inner sep=0.1pt}]
\node[another, label=right:{}] (u1) at (0,.5) {};
\node[another, label=right:{}] (u2) at (0,-.5) {};
\node[another, label=right:{}] (v1) at (-.75,0) {};
\node[another, label=right:{}] (v2) at (.75,0) {};
\foreach \to/\from in {u1/v2, u1/v1, u2/v2, u2/v1, u1/u2}
\draw [-] (\to)--(\from);
\node[label=above:{$G^{\diamond}$}](G1) at (0,-1.5){};
\end{tikzpicture}
\hspace{.5in}
\begin{tikzpicture}
[scale=1, vertices/.style={draw, fill=black, circle, minimum size = 10pt, inner sep=2pt}, another/.style={draw, fill=black, circle, minimum size = 3.5pt, inner sep=0.1pt}]
\node[another, label=right:{}] (u1) at (0,.5) {};
\node[another, label=right:{}] (u2) at (0,-.5) {};
\node[another, label=right:{}] (v1) at (-.75,0) {};
\node[another, label=right:{}] (v2) at (.75,0) {};
\node[another, label=right:{}] (t) at (1.5,0) {};
\foreach \to/\from in {u1/v2, u1/v1, u2/v2, u2/v1, u1/u2,v2/t}
\draw [-] (\to)--(\from);
\node[label=above:{$G^{\Diamond}$}](G2) at (0,-1.5){};
\end{tikzpicture}
\hspace{.5in}
\begin{tikzpicture}
[scale=1, vertices/.style={draw, fill=black, circle, minimum size = 10pt, inner sep=2pt}, another/.style={draw, fill=black, circle, minimum size = 3.5pt, inner sep=0.1pt}]
\node[another, label=left:{$u$}] (a) at (-1.5,0) {};
\node[another, label=right:{}] (b) at (-.75,-.5) {};
\node[another, label=right:{}] (c) at (.75,-.5) {};
\node[another, label=right:{}] (d) at (.4,0) {};
\node[another, label=right:{}] (e) at (-.4,0) {};
\node[another, label=right:{}] (f) at (-.75,.5) {};
\node[another, label=right:{}] (g) at (.75,.5) {};
\node[another, label=right:{$v$}] (h) at (1.5,0) {};
\foreach \to/\from in {a/b,a/f, b/c, f/g, b/d, d/c, e/f, e/g, d/e, c/h, h/g}
\draw [-] (\to)--(\from);
\node[label=above:{$H_0$}](G3) at (0,-1.75){};
\end{tikzpicture}
\caption{Diamond $G^{\diamond}$, tailed diamond $G^{\Diamond}$ and Havel's quasi-edge $H_0$.}
\label{diamonds}
\end{center}
\end{figure}

Let $\mathcal{TW}$ denote the family of graphs that can be obtained from $K_4$ by diamond expansions. Observe that every edge of $K_4$ is a diamond edge and a diamond expansion of $K_4$ using any edge of $K_4$ is isomorphic to the Moser spindle graph $M$ shown in Fig.\ref{diam_exp_1}. The Moser spindle has two diamond edges, namely $aa'$ and $bb'$, and end-vertices of each diamond edge are twin vertices. Using the symmetry of the graph, it is easy to see that any diamond expansion of $M$ is isomorphic to the graph $T$ in Fig.~\ref{diam_exp_1}. Note that $T$ has two diamond edges, namely, $\alpha \alpha '$ and $\beta \beta '$. Also, the graph $T$ contains a disjoint union $2K_3\cup C_4$ where each triangle $K_3$ 
in the disjoint union contains a diamond edge of $T$ (the thick edges of $T$ in Fig.~\ref{diam_exp_1} form a $2K_3\cup C_4$). Therefore, all diamond expansions of $T$ must contain a $2K_3\cup C_4$ as well.

\begin{figure}[h!]
\begin{center}
\begin{tikzpicture}
[scale=1, vertices/.style={draw, fill=black, circle, minimum size = 10pt, inner sep=2pt}, another/.style={draw, fill=black, circle, minimum size = 3.5pt, inner sep=0.1pt}]
\node[another, label=right:{}] (a) at (0,1.25) {};
\node[another, label=right:{}] (b) at (-.5,0) {};
\node[another, label=right:{}] (c) at (.5,0) {};
\node[another, label=right:{}] (d) at (0,-1.25) {};
\foreach \to/\from in {a/b, a/c, b/c, b/d, c/d}
\draw [-] (\to)--(\from);
\draw (d) to [out=170,in=190,looseness=1.5] (a);
\node[label=above:{$K_4$}](G1) at (0,-2){};
\end{tikzpicture}
\hspace{.1in}
\begin{tikzpicture}
[scale=1, vertices/.style={draw, fill=black, circle, minimum size = 10pt, inner sep=2pt}, another/.style={draw, fill=black, circle, minimum size = 3.5pt, inner sep=0.1pt}]
\node[another, label=left:{$e$}] (e) at (0,1.25) {};
\node[another, label=right:{$a$}] (a) at (-.75,0) {};
\node[another, label=above:{$a'$}] (a') at (-1.5,.25) {};
\node[another, label=left:{$b$}] (b) at (.75,0) {};
\node[another, label=above:{$b'$}] (b') at (1.5,.25) {};
\node[another, label=left:{$d$}] (d) at (-.75,-1.25) {};
\node[another, label=right:{$c$}] (c) at (.75,-1.25) {};
\foreach \to/\from in {e/a, e/a', e/b, e/b', a/a', b/b', a/d, a'/d, b/c, b'/c, c/d}
\draw [-] (\to)--(\from);
\node[label=above:{$M$}](M) at (0,-2){};
\end{tikzpicture}
\hspace{.1in}
\begin{tikzpicture}
[scale=1, vertices/.style={draw, fill=black, circle, minimum size = 10pt, inner sep=2pt}, another/.style={draw, fill=black, circle, minimum size = 3.5pt, inner sep=0.1pt}, another2/.style={draw, circle, minimum size = 3.5pt, inner sep=0.1pt}]
\node[another2, label=left:{}] (e) at (0,1.25) {};
\node[another2, label=right:{}] (a) at (-.25,0) {};
\node[another2, label=above:{}] (a') at (-2.25,0) {};
\node[another, label=left:{$\beta$}] (b) at (.75,0) {};
\node[another, label=right:{$\beta '$}] (b') at (1.5,.25) {};
\node[another2, label=left:{}] (d) at (-.75,-1.25) {};
\node[another2, label=right:{}] (c) at (.75,-1.25) {};
\node[another, label=above:{$\alpha$}] (x) at (-1,.25) {};
\node[another, label=below:{$\alpha '$}] (y) at (-1,-.25) {};
\node[another2, label=right:{}] (z) at (-1.75,0) {};
\foreach \to/\from in {e/b, e/b', c/d, a/x, a/y,z/a'}
\draw [-] (\to)--(\from);
\draw [-, line width=1.5pt] (a') to [out=60,in=190,looseness=1] (e); 
\draw [-, line width=1.5pt] (d) to [out=160,in=270,looseness=1] (a'); 
\foreach \to/\from in {e/a, d/a, x/y, x/z, y/z, b/b', b'/c, b/c}
\draw [-, line width=1.5pt] (\to)--(\from);
\node[label=above:{$T$}](T) at (0,-2){};
\end{tikzpicture}
\caption{$K_4$, the Moser spindle $M$ and the graph $T$.}
\label{diam_exp_1}
\end{center}
\end{figure}
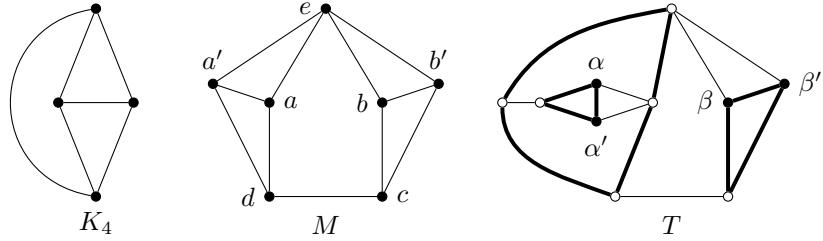

{\it Havel's quasi-edge} is the graph denoted by $H_0$ in Fig.~\ref{diamonds}. Let $u$ and $v$ be two vertices of degree $2$ in $H_0$. A {\it Havel's quasi-edge $H_0$ expansion} of a graph $G$ is defined as follows: delete some diamond edge $xy$ from $G$, and identify $x$ with the vertex $u$ of $H_0$ and identify $y$ with the vertex $v$ of $H_0$. Let $\mathcal{TW}_1$ denote the family of graphs that can be obtained from a graph in $\mathcal{TW}$ by a Havel's quasi-edge $H_0$ expansion.  Let $\mathcal{TW}_2$ denote the family of graphs that can be obtained from a graph in $\mathcal{TW}_1$ by a Havel's quasi-edge $H_0$ expansion. It is clear that every graph in $\mathcal{TW} _1\cup \mathcal{TW} _2$ contains a $3K_3$ because every graph in $\mathcal{TW}$ has two non-adjacent diamond edges and $H_0$ contains a $2K_3$ which contains neither of the vertices $u$ and $v$.

\begin{figure}[h!]
\begin{center}
\begin{tikzpicture}
[scale=1.25, vertices/.style={draw, fill=black, circle, minimum size = 10pt, inner sep=2pt}, another/.style={draw, fill=black, circle, minimum size = 3.5pt, inner sep=0.1pt}]
\node[another, label=left:{$x$}] (x) at (-.5,.5) {};
\node[another, label=right:{$z'$}] (z') at (.5,.5) {};
\node[another, label=left:{$y'$}] (y') at (-1,0) {};
\node[another, label=right:{$y$}] (y) at (1,0) {};
\node[another, label=left:{$z$}] (z) at (-.5,-.5) {};
\node[another, label=right:{$x'$}] (x') at (.5,-.5) {};
\node[another, label=right:{$r$}] (u) at (0,0) {};
\foreach \to/\from in {x/z', x/y',z'/y, y'/z, z/x', y/x', u/z', u/x', u/y'}
\draw [-] (\to)--(\from);
\node[label=above:{$Q_1$}](Q1) at (0,.6){};
\end{tikzpicture}
\begin{tikzpicture}
[scale=1.25, vertices/.style={draw, fill=black, circle, minimum size = 10pt, inner sep=2pt}, another/.style={draw, fill=black, circle, minimum size = 3.5pt, inner sep=0.1pt}]
\node[another, label=left:{$x$}] (x) at (-.5,.5) {};
\node[another, label=right:{$z'$}] (z') at (.5,.5) {};
\node[another, label=left:{$y'$}] (y') at (-1,0) {};
\node[another, label=right:{$y$}] (y) at (1,0) {};
\node[another, label=left:{$z$}] (z) at (-.5,-.5) {};
\node[another, label=right:{$x'$}] (x') at (.5,-.5) {};
\node[another, label=above:{$r$}] (r) at (-.25,0) {};
\node[another, label=right:{$s$}] (s) at (.25,0) {};
\foreach \to/\from in {x/z', x/y',z'/y, y'/z, z/x', y/x', r/y', r/z', s/x', s/z'}
\draw [-] (\to)--(\from);
\draw [-] (y') to [out=340,in=250,looseness=1] (s); 
\node[label=above:{$Q_2$}](Q2) at (0,.6){};
\end{tikzpicture}
\begin{tikzpicture}
[scale=1.25, vertices/.style={draw, fill=black, circle, minimum size = 10pt, inner sep=2pt}, another/.style={draw, fill=black, circle, minimum size = 3.5pt, inner sep=0.1pt}]
\node[another, label=left:{$x$}] (x) at (-.5,.5) {};
\node[another, label=right:{$z'$}] (z') at (.5,.5) {};
\node[another, label=left:{$y'$}] (y') at (-1,0) {};
\node[another, label=right:{$y$}] (y) at (1,0) {};
\node[another, label=left:{$z$}] (z) at (-.5,-.5) {};
\node[another, label=right:{$x'$}] (x') at (.5,-.5) {};
\node[another, label=left:{}] (r) at (0,.25) {};
\node[another, label=right:{}] (s) at (.3,-.1) {};
\node[another, label=right:{}] (t) at (-.25,-.15) {};
\foreach \to/\from in {x/z', x/y',z'/y, y'/z, z/x', y/x', t/y', r/z', r/s, t/s,t/x',s/y,r/y'}
\draw [-] (\to)--(\from);
\node[label=above:{$Q_3$}](Q3) at (0,.6){};
\end{tikzpicture}
\caption{Several quadrangulations of the interior of $C_6$.}
\label{quadr}
\end{center}
\end{figure}
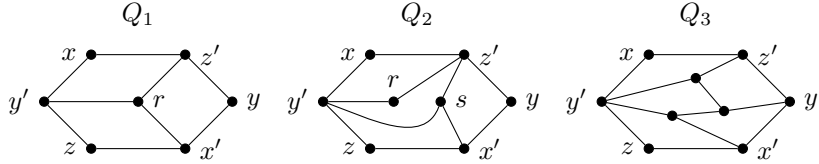

Let $G$ be a plane graph. We say that a subgraph $P$ is a {\it patch} in $G$ if $P$ is a quadrangulation of the interior of a $C_6$ shown by $xz'yx'zy'$, such that all neighbors of $x', y'$ and $z'$ are inside of $P$.  The patch $P$ is called {\it critical} if $x'$, $y'$ and $z'$ have at least three neighbors and every cycle of length four in $P$ bounds a face. Let $v$ be a vertex of $G$ with $N_G(v)=\{x,y,z\}$. We write $G_v$ for the graph obtained from $G \setminus v$ by inserting a critical patch $P$ whose boundary is a $C_6$ shown by $xz'yx'zy'$, with $x',y',z'$ being new vertices, and we say that $G_v$ is a {\it critical patch P expansion} of $G$ at the vertex $v$. We refer the reader to \cite{lidicky} for further details of the notions mentioned here. Critical patch expansions of $K_4$ using the quadrangulations $Q_1$ and $Q_2$ in Fig.~\ref{quadr} is shown in Fig.~\ref{myciel}. The graph $M^{'}$ in Fig.~\ref{myciel} is also known as the {\it Mycielskian of a triangle}.

\begin{figure}[h!]
\begin{center}
\begin{tikzpicture}
[scale=1.5, vertices/.style={draw, fill=black, circle, minimum size = 10pt, inner sep=2pt}, another/.style={draw, fill=black, circle, minimum size = 3.5pt, inner sep=0.1pt}]
\node[another, label=below:{$v$}] (v) at (0,0) {};
\node[another, label=below:{$z$}] (z) at (-.75,-.5) {};
\node[another, label=below:{$y$}] (y) at (.75,-.5) {};
\node[another, label=above:{$x$}] (x) at (0,.5) {};
\foreach \to/\from in {x/y, x/z, y/z, v/y, v/x, v/z}
\draw [-] (\to)--(\from);
\node[label=above:{$K_4$}](K4) at (0,-1.25){};
\end{tikzpicture}
\begin{tikzpicture}
[scale=1.5, vertices/.style={draw, fill=black, circle, minimum size = 10pt, inner sep=2pt}, another/.style={draw, fill=black, circle, minimum size = 3.5pt, inner sep=0.1pt}]
\node[another, label=above:{$x$}] (x) at (-.25,.5) {};
\node[another, label=right:{$z'$}] (z') at (.25,.5) {};
\node[another, label=left:{$y'$}] (y') at (-.45,0) {};
\node[another, label=right:{$y$}] (y) at (.75,0) {};
\node[another, label=below:{$z$}] (z) at (-.25,-.5) {};
\node[another, label=right:{$x'$}] (x') at (.25,-.5) {};
\node[another, label=right:{$r$}] (u) at (.1,0) {};
\foreach \to/\from in {x/z', x/y',z'/y, y'/z, z/x', y/x', u/z', u/x', u/y'}
\draw [-] (\to)--(\from);
\draw [-] (x) to [out=45,in=70,looseness=1.6] (y); 
\draw [-] (x) to [out=160,in=200,looseness=2] (z); 
\draw [-] (z) to [out=320,in=280,looseness=1.7] (y); 
\node[label=above:{$M^{'}$}](Mic) at (0,-1.3){};
\end{tikzpicture}
\begin{tikzpicture}
[scale=1.5, vertices/.style={draw, fill=black, circle, minimum size = 10pt, inner sep=2pt}, another/.style={draw, fill=black, circle, minimum size = 3.5pt, inner sep=0.1pt}]
\node[another, label=above:{$x$}] (x) at (-.25,.5) {};
\node[another, label=right:{$z'$}] (z') at (.25,.5) {};
\node[another, label=left:{$y'$}] (y') at (-.5,0) {};
\node[another, label=right:{$y$}] (y) at (.75,0) {};
\node[another, label=below:{$z$}] (z) at (-.25,-.5) {};
\node[another, label=right:{$x'$}] (x') at (.25,-.5) {};
\node[another, label=above:{$r$}] (u) at (-.05,0) {};
\node[another, label=right:{$s$}] (v) at (.3,0) {};
\foreach \to/\from in {x/z', x/y',z'/y, y'/z, z/x', y/x', u/y', u/z',v/z',v/x'}
\draw [-] (\to)--(\from);
\draw [-] (x) to [out=45,in=70,looseness=1.6] (y); 
\draw [-] (x) to [out=160,in=200,looseness=2] (z); 
\draw [-] (z) to [out=320,in=280,looseness=1.7] (y); 
\draw [-] (y') to [out=330,in=230,looseness=1.2] (v); 
\node[label=above:{$M^{''}$}](M2) at (0,-1.3){};
\end{tikzpicture}
\caption{Some critical patch expansions of $K_4$ at $v$.}
\label{myciel}
\end{center}
\end{figure}
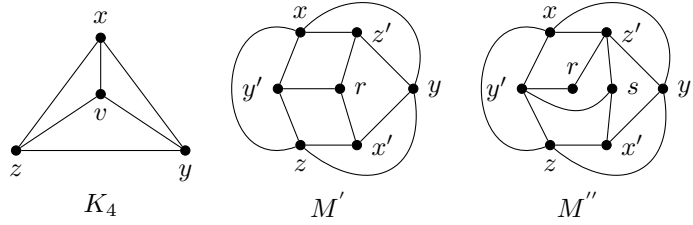

\begin{figure}[h!]
\begin{center}
\begin{tikzpicture}
[scale=1, vertices/.style={draw, fill=black, circle, minimum size = 10pt, inner sep=2pt}, another/.style={draw, fill=black, circle, minimum size = 3.5pt, inner sep=0.1pt}]
\node[another, label=left:{$a$}] (a) at (0,1.25) {};
\node[another, label=right:{$y$}] (y) at (-.75,0) {};
\node[another, label=above:{$x$}] (x) at (-1.5,.25) {};
\node[another, label=left:{$b$}] (b) at (.75,0) {};
\node[another, label=above:{$c$}] (c) at (1.5,.25) {};
\node[another, label=left:{$v$}] (v) at (-.75,-1.25) {};
\node[another, label=right:{$z$}] (z) at (.75,-1.25) {};
\foreach \to/\from in {a/y, a/x, a/b, a/c, y/x, b/c, y/v, x/v, b/z, c/z, z/v}
\draw [-] (\to)--(\from);
\node[label=above:{$M$}](M) at (0,-2.1){};
\end{tikzpicture}
\hspace{.2in}
\begin{tikzpicture}
[scale=1, vertices/.style={draw, fill=black, circle, minimum size = 10pt, inner sep=2pt}, another/.style={draw, fill=black, circle, minimum size = 3.5pt, inner sep=0.1pt}]
\node[another, label=right:{$a$}] (a) at (0,1.25) {};
\node[another, label=right:{$y$}] (y) at (-.5,0) {};
\node[another, label=right:{$x'$}] (x') at (-.25,-.75) {};
\node[another, label=left:{$x$}] (x) at (-2.5,0) {};
\node[another, label=left:{$y'$}] (y') at (-2.25,-.75) {};
\node[another, label=above:{$z'$}] (z') at (-1.75,-.25) {};
\node[another, label=below:{}] (u) at (-1.25,-.95) {};
\node[another, label=left:{$b$}] (b) at (.75,0) {};
\node[another, label=above:{$c$}] (c) at (1.5,.25) {};
\node[another, label=right:{$z$}] (z) at (.7,-1.3) {};
\foreach \to/\from in {a/y, a/b, a/c, b/c, b/z, c/z, x/z',z'/y,x/y',u/z',y/x'}
\draw [-] (\to)--(\from);
\draw [-] (x) to [out=70,in=180,looseness=1] (a); 
\draw [-, line width=1.5pt] (x) to [out=60,in=100,looseness=.8] (y); 
\draw[-,line width=1.5pt] (a) to (b);
\draw[-,line width=1.5pt] (a) to (c);
\draw[-,line width=1.5pt] (c) to (b);
\draw[-,line width=1.5pt] (z') to (y);
\draw[-,line width=1.5pt] (x) to (z');
\draw[-,line width=1.5pt] (u) to (y');
\draw[-,line width=1.5pt] (u) to (x');
\draw [-, line width=1.5pt] (z) to [out=160,in=280,looseness=1] (x'); 
\draw [-, line width=1.5pt] (z) to [out=180,in=320,looseness=1] (y'); 
\node[label=above:{$M_1$}](M1) at (0,-2.1){};
\end{tikzpicture}
\hspace{.2in}
\begin{tikzpicture}
[scale=1, vertices/.style={draw, fill=black, circle, minimum size = 10pt, inner sep=2pt}, another/.style={draw, fill=black, circle, minimum size = 3.5pt, inner sep=0.1pt}]
\node[another, label=right:{$a$}] (a) at (0,1.25) {};
\node[another, label=right:{$y$}] (y) at (-.5,0) {};
\node[another, label=right:{$x'$}] (x') at (-.2,-.65) {};
\node[another, label=left:{$x$}] (x) at (-2.5,0) {};
\node[another, label=left:{$y'$}] (y') at (-2.25,-.75) {};
\node[another, label=above:{$z'$}] (z') at (-1.75,-.25) {};
\node[another, label=below:{}] (u) at (-1,-.5) {};
\node[another, label=below:{}] (v) at (-1.25,-1) {};
\node[another, label=left:{$b$}] (b) at (.75,0) {};
\node[another, label=above:{$c$}] (c) at (1.5,.25) {};
\node[another, label=right:{$z$}] (z) at (.7,-1.3) {};
\foreach \to/\from in {a/y, b/z, c/z,x/y',y/x',u/x',u/z', v/z'}
\draw [-] (\to)--(\from);
\foreach \to/\from in {a/c,a/b,b/c,x/z',z'/y,y'/v,v/x'}
\draw [-, line width=1.5pt] (\to)--(\from);
\draw [-] (x) to [out=70,in=180,looseness=1] (a); 
\draw [-, line width=1.5pt] (x) to [out=60,in=100,looseness=.8] (y); 
\draw [-, line width=1.5pt] (z) to [out=180,in=310,looseness=1] (y'); 
\draw[-,line width=1.5pt] (x') to [out=300,in=170,looseness=1](z);
\node[label=above:{$M_2$}](M2) at (0,-2.1){};
\end{tikzpicture}
\caption{Some critical patch expansions of $M$ at $v$. The graphs $M_1$ and $M_2$ contain a $2K_3\cup C_4$ shown by bold edges.}
\label{moser_expansions}
\end{center}
\end{figure}
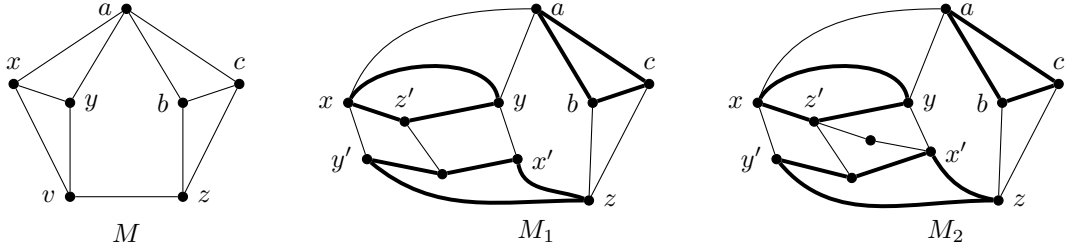

\begin{theorem} \cite{lidicky} A planar $4$-critical graph has exactly four triangles if and only if it is obtained from a graph in  $\mathcal{F}=\mathcal{TW}\cup \mathcal{TW}_1\cup \mathcal{TW}_2$ by replacing several (possibly zero) non-adjacent vertices of degree $3$ with critical patches.
\end{theorem}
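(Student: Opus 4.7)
The plan is to prove both directions of the characterization separately. For sufficiency, I would verify that each of the three operations --- diamond expansion, Havel's quasi-edge $H_0$ expansion, and critical patch expansion at a degree-$3$ vertex --- preserves the class of planar $4$-critical graphs with exactly four triangles. Planarity is immediate because each gadget is plane and is inserted along a prescribed edge or vertex of a plane embedding. The triangle count is preserved by design: a diamond expansion destroys the two triangles meeting at the deleted diamond edge $xy$ but creates exactly two new triangles inside the tailed diamond; an $H_0$ expansion similarly balances the triangles around its two degree-$2$ attachment vertices; and critical patches are quadrangulations, so they contribute no new triangles. Finally, $4$-criticality can be verified by a direct coloring argument that lifts proper $3$-colorings across the gadget and uses the twin-like structure at the attachment vertices to match colors.

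For necessity I would use induction on $|V(G)|$ with base case $G\cong K_4$. Given a $4$-critical planar graph $G$ with exactly four triangles on more than four vertices, I would exhibit a \emph{reducible configuration} whose contraction yields a smaller graph $G'$ in the same class; by induction $G'$ is obtained from $\mathcal{F}$ by critical patch expansions, and hence so is $G$. The main structural tool is a discharging argument on a fixed plane embedding, starting from initial charges $\mathrm{ch}(v)=\deg_G(v)-4$ and $\mathrm{ch}(f)=\ell(f)-4$ which sum to $-8$ by Euler's formula. Since $G$ is $4$-critical we have $\delta(G)\geq 3$, and since there are only four triangles, at most four faces are triangular while every other face has length at least $4$. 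I would design discharging rules that push charge from long faces and high-degree vertices toward degree-$3$ vertices incident to triangles, and then argue that the residual negative charge can only be concentrated on one of three local patterns: a degree-$3$ vertex whose closed neighbourhood forms the boundary of a critical patch (reversing a patch expansion), a tailed diamond attached along a diamond edge (reversing a diamond expansion), or a Havel quasi-edge $H_0$ attached along a diamond edge (reversing an $H_0$ expansion). In each case, the contraction gives a smaller planar graph $G'$ with exactly four triangles, and $4$-criticality of $G'$ follows from a short coloring pullback: any proper $3$-coloring of $G'\setminus e'$ extends uniquely (up to the patch/gadget) to one of $G\setminus e$ for a corresponding edge $e$.

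The main obstacle is the structural step --- showing that one of the three reducible configurations must always be present. This requires a careful combinatorial analysis of how the four triangles can be arranged in a plane $4$-critical graph: triangles either share edges (creating diamond edges) or are disjoint, and the $4$-chromaticity must be witnessed by a Grötzsch-type obstruction that relates them. The delicate part is ruling out hybrid configurations in which a triangle participates in both a patch boundary and a diamond/quasi-edge in incompatible ways; here one uses that critical patches have no $4$-cycles other than facial ones together with the degree condition on $x',y',z'$ in the patch definition to force a clean separation. Once this case analysis is in place, the two directions of the equivalence fit together in a uniform inductive framework rooted at $K_4$.
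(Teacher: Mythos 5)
This statement is not proved in the paper at all: it is quoted verbatim from the cited work of Borodin, Dvo\v{r}\'ak, Kostochka, Lidick\'y and Yancey, and the author uses it as a black box. So there is no internal proof to compare yours against; what you have written is an attempt to reprove a substantial external theorem, and as such it has a genuine gap. The entire difficulty of the result lies in the necessity direction, and there your argument is a plan rather than a proof. You set up charges $\deg(v)-4$ and $\ell(f)-4$ summing to $-8$ and then assert that, after suitable discharging rules, the negative charge ``can only be concentrated'' on one of three reducible configurations (a critical patch, a tailed diamond, or a Havel quasi-edge). That assertion \emph{is} the theorem; no rules are specified, no final-charge analysis is performed, and you yourself flag this step as ``the main obstacle.'' Moreover, the known proof of this characterization does not proceed by such a naive discharging induction: it relies on the potential-function machinery developed by Kostochka and Yancey for $4$-critical graphs, together with a lengthy case analysis, precisely because $4$-criticality of the contracted graph $G'$ is not preserved by a ``short coloring pullback'' --- contracting a gadget in a $4$-critical graph can destroy criticality, and controlling this is where the real work lives. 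Your sketch also never recovers the specific shape of the statement: why the construction starts from $K_4$, why at most \emph{two} Havel quasi-edge expansions occur (the cap $\mathcal{TW}_2$), why the patched degree-$3$ vertices must be pairwise non-adjacent, and why the patches must be critical (attachment vertices of degree at least three, all $4$-cycles facial). None of these constraints falls out of the proposed induction.

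The sufficiency direction is also thinner than it looks. Preservation of planarity and of the triangle count is plausible and your bookkeeping for the diamond expansion is essentially right, but ``$4$-criticality can be verified by a direct coloring argument'' hides the two nontrivial claims: that the expanded graph is not $3$-colorable (the gadgets must genuinely behave as quasi-edges, which requires an argument about their $3$-colorings), and that deleting \emph{any} edge, including one deep inside a patch or quasi-edge, yields a $3$-colorable graph. For arbitrary critical patches this is exactly where the hypotheses ``$x',y',z'$ have at least three neighbors'' and ``every $4$-cycle bounds a face'' are needed, and your proposal does not use them. In short, the proposal correctly identifies the statement's architecture but leaves its substance unproved; for the purposes of this paper the correct move is simply to cite the source, as the author does.
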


\begin{figure}[h!]
\begin{center}
\begin{tikzpicture}
[scale=1.5, vertices/.style={draw, fill=black, circle, minimum size = 10pt, inner sep=2pt}, another/.style={draw, fill=black, circle, minimum size = 3.5pt, inner sep=0.1pt}]
\node[another, label=left:{$u_1$}] (u1) at (0,0) {};
\node[another, label=below:{$u_2$}] (u2) at (-1,-1) {};
\node[another, label=below:{$u_3$}] (u3) at (1,-1) {};
\node[another, label=below:{$v_1$}] (v1) at (0,-1) {};
\node[another, label=right:{$v_2$}] (v2) at (.5,-.5) {};
\node[another, label=left:{$v_3$}] (v3) at (-.5,-.5) {};
\foreach \to/\from in {u1/v2, u1/v3, v2/v3, v2/u3, v1/u3,v1/u2, u2/v3, v3/v1, v1/v2}
\draw [-] (\to)--(\from);
\end{tikzpicture}
\caption{The triangular grid graph $T^{*}$.}
\label{trian_grid_picture}
\end{center}
\end{figure}
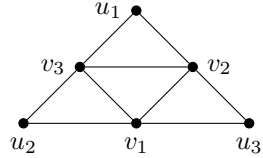

\begin{lemma}\label{trian_grid_lemma} Let $G$ be a $4$-critical planar graph. If $G$ contains a triangular grid graph, then either $G$ is the Mycielskian of a triangle $M^{'}$ or $G$ satisfies property $\mathcal{P}$.
\end{lemma}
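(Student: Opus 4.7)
The plan is to argue by contradiction: assume $G$ is a $4$-critical planar graph containing $T^{*}$ with $G\not\cong M'$ and $G$ not satisfying property $\mathcal{P}$, so in particular $\Delta(G)\le 4$ and $G$ has no three cactus-type cycles. Since $G$ is $4$-critical, $\delta(G)\ge 3$. The vertices $v_1,v_2,v_3$ already have degree $4$ in $T^{*}$, so they are saturated in $G$, and in particular no vertex outside $\{u_1,u_2,u_3\}$ can be adjacent to any $v_i$. Each of $u_1,u_2,u_3$ has degree $2$ in $T^{*}$, so must pick up at least one extra neighbor in $\{u_j:j\neq i\}\cup(V(G)\setminus V(T^{*}))$. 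I would then split on what these extra neighbors look like.

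\textbf{Case 1:} some pair $u_iu_j$ is an edge of $G$, say $u_1u_2\in E(G)$. Then $u_1u_2v_3$ is a triangle (both endpoints share $v_3$), and it is vertex-disjoint from the $T^{*}$-triangle $u_3v_1v_2$, already giving two edge-disjoint cycles in cactus position. Since $\deg_G(u_3)\ge 3$ and the $v_i$'s are saturated, $u_3$ has an extra neighbor $w_3$ that is either some $u_j$ or lies outside $V(T^{*})$. In each configuration I would track the additional edges forced by the minimum-degree condition on $u_1,u_2,u_3$ and exhibit a third edge-disjoint cycle meeting each of the previous two in at most one vertex in a tree-like way, producing three cactus-type cycles, contradicting the hypothesis.

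\textbf{Case 2:} $\{u_1,u_2,u_3\}$ is independent in $G$, so each $u_i$ has all its extra neighbors in $V(G)\setminus V(T^{*})$. In \emph{Subcase 2a} some vertex $w\in V(G)\setminus V(T^{*})$ is adjacent to all of $u_1,u_2,u_3$. Then the subgraph of $G$ on $V(T^{*})\cup\{w\}$ is precisely $M'$, so $M'\subseteq G$. Every proper subgraph of a $4$-critical graph is $3$-colorable, and $\chi(M')=4$, so any extra vertex or edge of $G$ outside $M'$ would give a proper subgraph still containing $M'$ and hence $4$-chromatic; this forces $G\cong M'$, contradicting $G\not\cong M'$. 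In \emph{Subcase 2b} no single vertex outside $T^{*}$ is a common neighbor of all three $u_i$'s, so I can choose extra neighbors $a$ of $u_1$, $b$ of $u_2$, $c$ of $u_3$ which are not all equal. Picking one triangle $u_iv_jv_k$ from $T^{*}$ together with two further cycles that leave $T^{*}$ through distinct outside vertices, I would assemble three edge-disjoint cycles whose union is a cactus with exactly three cycles; planarity of $G$ and the saturation of the $v_i$'s prevent any extra cycle from appearing in the union.

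The main obstacle I expect is the cactus verification in Subcase 2b and in the harder configurations of Case 1: the obvious choice of the three outer triangles $u_iv_jv_k$ of $T^{*}$ itself is \emph{not} cactus-type, because the pairwise shared vertices $v_1,v_2,v_3$ themselves form the inner triangle of $T^{*}$, producing a fourth cycle in the union. One is therefore forced to route at least one of the three cycles through extra neighbors of the $u_i$'s outside $T^{*}$, and the heart of the argument is showing that such a routing is always possible whenever $G\neq M'$.
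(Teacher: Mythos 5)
Your reduction to $\Delta(G)\le 4$, the observation that $v_1,v_2,v_3$ are saturated, and Subcase 2a (a common outside neighbor of $u_1,u_2,u_3$ forces the induced subgraph $M'$, and $4$-criticality of $G$ then forces $G\cong M'$) are all sound and match the paper's treatment of that situation. But the core of the lemma --- actually exhibiting three cactus-type cycles in every remaining configuration --- is left as a plan, and the plan you sketch is the wrong one. In Case 1 and Subcase 2b you propose to take \emph{one} triangle of $T^{*}$ together with \emph{two} cycles that ``leave $T^{*}$ through distinct outside vertices,'' but you never show such a pair of external cycles exists, and in general it need not: when $G\setminus T^{*}$ is a small tree (say a single edge or short path), every cycle through an outside vertex must pass through at least two of $u_1,u_2,u_3$ or through the shared outside edge, and any two such cycles either share an edge or share two vertices, so their union with a $T^{*}$-triangle acquires a fourth cycle (the same phenomenon you yourself flag for the three outer triangles of $T^{*}$). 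The construction that actually works is the opposite balance: take \emph{two} triangles of $T^{*}$, e.g.\ $u_1v_2v_3$ and $u_2v_1v_3$, which share only $v_3$, plus \emph{one} further cycle. The paper produces that one cycle by splitting on the structure of $G\setminus T^{*}$: if it contains a cycle, that cycle is vertex-disjoint from the two chosen triangles and we are done; if it is a forest with no isolated vertex, then each leaf of a component has (by $\delta(G)\ge 3$, $\Delta(G)\le 4$ and the saturation of the $v_i$'s) at least two neighbors among $u_1,u_2,u_3$, so by pigeonhole two leaves of one component share a common neighbor $u_i$, and the two pendant edges at $u_i$ together with the tree path between the leaves form the third cycle, which meets the two triangles in at most the single vertex $u_i$. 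Your case split (edges among the $u_i$'s versus not) never isolates this mechanism, so the ``routing'' you defer is exactly the missing proof.

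Two smaller but real omissions: you never rule out $V(G)=V(T^{*})$ (the paper does so by noting the only $4$-critical graph on six vertices is the wheel $K_1\vee C_5$, which contains no triangular grid); without this, your Case 1 can land on six-vertex configurations such as $T^{*}+u_1u_2+u_1u_3$ in which no three cactus-type cycles exist at all, and your promised ``third edge-disjoint cycle'' cannot be produced without an additional (chromatic or counting) argument. Likewise, Case 1 in its entirety is deferred (``I would track the additional edges\ldots''), whereas in the paper no such case distinction is needed: the argument on $G\setminus T^{*}$ is uniform and indifferent to possible edges among the $u_i$'s.
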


\begin{proof} Let $T^{*}$ be a triangular grid in $G$ with vertices labelled as in Fig.~\ref{trian_grid_picture}. First, note that $V(G)\setminus V(T^{*})\neq \emptyset$ because the unique $4$-critical graph on six vertices is the wheel graph $K_1\vee C_5$ which clearly does not contain a $T^{*}$.  Let us suppose that $\Delta (G)=4$. If $G\setminus T^{*}$ has a cycle $C$ in it, then this cycle $C$ and the triangles $u_1v_2v_3$ and $v_1u_2v_3$ are three cactus-type cycles in $G$. So, for the rest, we may assume that $G\setminus T^{*}$ is a forest. We consider two cases.

\noindent {\it Case 1:} $G\setminus T^{*}$ has an isolated vertex $v$. Since $\Delta (G)=4$ and $\delta (G)\geq 3$, the vertex $v$ has no neighbor in $\{v_1,v_2,v_3\}$ and it must be adjacent to all of $u_1,u_2,u_3$. Now, $T^{*}$ together with $v$ yields the graph $M^{'}$ which is a $4$-critical graph. Note that a $4$-critical graph cannot contain a $4$-critical graph as a proper subgraph. Therefore, $G$ must indeed be isomorphic to $M^{'}$.

\noindent {\it Case 2:} $G\setminus T^{*}$ has no isolated vertices. Let $w_1$ and $w_2$ be two leaves of a connected component of $G\setminus T^{*}$. Since  $\Delta (G)=4$ and $\delta (G)\geq 3$, the vertices $w_1$ and $w_2$ have no neighbors in $\{v_1,v_2,v_3\}$ and have at least two neighbors in $\{u_1,u_2,u_3\}$. Hence, some vertex $u_i$ must be adjacent to both of $w_1$ and $w_2$. The edges $u_iw_1$ and $u_iw_2$ together with the unique path joining $v_1$ to $v_2$ in $G\setminus T^{*}$ form a cycle $C$. Now, the cycle $C$ together with the triangles $u_1v_2v_3$ and $v_1u_2v_3$ form three cactus-type cycles in $G$.
\end{proof}

\begin{corollary}\label{lidicky_corollary} Every planar $4$-critical graph with exactly four triangles which is different from $K_4$, the graph $M$ (the Moser spindle) and the graph $M^{'}$ (the Mycielskian of a triangle) satisfies the property $\mathcal{P}$.
\end{corollary}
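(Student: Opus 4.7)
The plan is to invoke the Lidický characterization theorem stated just above and case-split on the base graph $H \in \mathcal{F} = \mathcal{TW} \cup \mathcal{TW}_1 \cup \mathcal{TW}_2$ from which $G$ is obtained by replacing an independent set $S$ of degree-$3$ vertices of $H$ by critical patches. The hypothesis $G \notin \{K_4, M, M'\}$ is used to exclude a handful of small graphs along the way.

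If $S = \emptyset$ then $G = H$; the hypothesis rules out $H = K_4$ and $H = M$, while the text has already observed that every remaining $H$ contains either a $3K_3$ (in case $H \in \mathcal{TW}_1 \cup \mathcal{TW}_2$) or a $2K_3 \cup C_4$ (in case $H \in \mathcal{TW}\setminus \{K_4, M\}$, using the structure of $T$ and its preservation under further diamond expansions), and each of these substructures directly supplies three cactus-type cycles. When $S \neq \emptyset$, I would treat the four choices of $H$ separately. For $H = K_4$, since $K_4$ is a clique we have $|S| = 1$, and for any critical patch inserted at the chosen vertex $v$ the subgraph of $G$ induced by $\{x, y, z, x', y', z'\}$ (the three original neighbours of $v$ and the three outer-$C_6$ patch vertices) realises the triangular grid $T^*$, because the three edges $xy, yz, xz$ (from $K_4 \setminus v$) plus the six outer-$C_6$ edges coincide with the nine edges of $T^*$; Lemma~\ref{trian_grid_lemma} then forces $G \cong M'$ (excluded) or $G$ satisfies $\mathcal{P}$. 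For $H = M$, I would argue case-by-case on the degree-$3$ vertex $u \in S$ being replaced, using two triangles of $M$ avoiding $u$ when such a vertex-disjoint pair exists (as in $M_1, M_2$ of Figure~\ref{moser_expansions}) and otherwise using the two new triangles created by the patch from edges of $M$ among $u$'s neighbours, combined with an internal $C_4$ of the patch. For $H \in \mathcal{TW}\setminus\{K_4, M\}$ or $H \in \mathcal{TW}_1 \cup \mathcal{TW}_2$, the plan is to exhibit a $2K_3 \cup C_4$ (respectively a $3K_3$) in $H$ whose vertices all have degree $\geq 4$ in $H$, so that no vertex of this substructure can lie in $S$ and the substructure transfers to $G$ unchanged.

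The main obstacle will be the degree-count step in the last two subcases: identifying, for each $H \in \mathcal{TW}\setminus\{K_4, M\}$ and $H \in \mathcal{TW}_1 \cup \mathcal{TW}_2$, an explicit copy of $2K_3 \cup C_4$ or $3K_3$ whose vertices all have degree $\geq 4$. This is essentially a structural bookkeeping exercise that requires unpacking the diamond expansion and the Havel's quasi-edge $H_0$ expansion carefully. In $T$, for instance, the two triangles of the $2K_3$ sit on diamond edges and their endpoints have degree $\geq 4$, but the $C_4$ involves a vertex of degree $3$, so the argument may need to either rotate to a different $C_4$ or use an internal $C_4$ of the inserted patch as a substitute whenever the offending vertex lies in $S$; such a substitute is always available because critical patches are quadrangulations of a $C_6$ in which $x', y', z'$ each have at least three neighbours and every $C_4$ bounds a face.
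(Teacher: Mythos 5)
Your skeleton (invoke the characterization and split on the base graph $H\in\mathcal{F}$) matches the paper, and your $H=K_4$ case is exactly the paper's: a critical patch expansion of $K_4$ contains $T^{*}$ on $\{x,y,z,x',y',z'\}$ and Lemma~\ref{trian_grid_lemma} finishes. The gap is in the step you yourself call the main obstacle, and it is not just bookkeeping: your plan to find a $2K_3\cup C_4$ (resp.\ $3K_3$) in $H$ \emph{all of whose vertices have degree at least $4$} cannot be carried out. Already for the member of $\mathcal{TW}_1$ obtained from $K_4$ by one Havel quasi-edge expansion, the only vertices of degree $\geq 4$ are the two vertices identified with $u$ and $v$ of $H_0$, so no triangle, let alone a $3K_3$, avoids degree-$3$ vertices; in particular $S$ may hit the triangles themselves (not only the $C_4$ of $T$, which you noticed), so your fallback of substituting an internal $C_4$ of a patch does not address the real problem. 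The paper's proof uses a different and much simpler observation that your proposal is missing: a critical patch expansion preserves the existence of three vertex-disjoint cycles, because at most one of the three cycles passes through the replaced vertex $v$, and its subpath $x\,v\,y$ can be rerouted as $x\,z'\,y$ along the boundary hexagon of the patch, using a brand-new vertex, so disjointness is preserved. Since every graph in $\mathcal{F}\setminus\{K_4,M\}$ contains three vertex-disjoint cycles ($3K_3$ or $2K_3\cup C_4$), \emph{all} of their patch expansions satisfy $\mathcal{P}$ at once, with no degree counting.

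Your treatment of $H=M$ also has a concrete flaw. For a patched end-vertex of a diamond edge you propose ``the two new triangles created by the patch from edges of $M$ among $u$'s neighbours, combined with an internal $C_4$ of the patch,'' but this fails for the smallest critical patch $Q_1$ of Fig.~\ref{quadr} (the very patch that produces $M'$): in a critical patch every $4$-cycle bounds a face, $Q_1$ has only three faces, each uses two boundary edges of the hexagon, and each shares an edge with one of your two triangles; moreover those two triangles share a vertex, so even with an edge-disjoint $C_4$ the ``exactly three cycles'' condition would still need verification. The paper's argument for this subcase is immediate: the unique degree-$4$ vertex $e$ of $M$ is adjacent to every diamond-edge endpoint, so the patch replacement gives $e$ two new neighbours in exchange for one, hence $\deg_G(e)\geq 5$ and $\Delta(G)\geq 5$, and later patches cannot lower the maximum degree. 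Finally, your per-vertex analysis of $M$ does not handle $|S|\geq 2$ (e.g.\ $S=\{a,c\}$ in the labelling of Fig.~\ref{diam_exp_1}), where the triangles of $M$ you intend to reuse are destroyed by the other patch; the paper's split --- if $S$ meets $\{a,a',b,b'\}$ then $\Delta(G)\geq 5$, otherwise $S\subseteq\{c,d\}$ forces $|S|=1$ since $cd\in E(M)$ and then $G$ contains a $2K_3\cup C_4$ as in Fig.~\ref{moser_expansions} --- covers this cleanly. So the statement is true and your outline reaches the right cases, but the two devices you rely on (degree-$\geq 4$ substructures and the patch-internal $C_4$) do not work as stated and must be replaced by the rerouting and maximum-degree arguments.
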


\begin{proof} As we already observed earlier, every graph in $\mathcal{F}\setminus \{K_4, M\}$ contains three vertex disjoint cycles. It is easy to see that if a graph contains three vertex disjoint cycles, then so does every critical patch expansion of the graph. Therefore, we only need to consider critical patch expansions of $K_4$ and $M$. Let $G\ncong K_4$ be a critical patch expansion of $K_4$. Then, $G$ must contain a triangular grid $T^{*}$. Moreover, replacing degree $3$ vertices of a graph containing a triangular grid $T^{*}$ with critical patches yields a graph which also contains triangular grid. Now, by Lemma~\ref{trian_grid_lemma}, all graphs obtained from a $K_4$ by such critical patch expansions, except $K_4$ itself and the graph $M^{'}$, satisfy the property $\mathcal{P}$.

Suppose that $G$ is a graph obtained from $M$ by replacing an end-vertex of a diamond edge of $M$ by a critical patch. Let $u$ be the unique vertex of degree four in $M$. Observe that $deg_G(u)\geq 5$ because $u$ is adjacent to two new vertices in $G$ besides the three end-vertices of the two diamond edges. It is also clear that a critical patch expansion cannot decrease the maximum degree of the graph. Lastly, let us consider a critical patch expansion $G$ obtained from $M$ by replacing a degree-three vertex which does not belong to a diamond edge. It is straightforward to check that in this case $G$ has three vertex disjoint cycles, indeed $G$ contains a $2K_3\cup C_4$, and the result follows.
\end{proof}

\section{Planar $4$-critical graphs with at least five triangles}

\begin{lemma}\label{fan_lemma} Let  $G$ be a $4$-critical planar graph. If $G$ has at least five triangles  and $G$ contains the fan graph $K_1\vee P_4$, then $G$ satisfies $\mathcal{P}$.
\end{lemma}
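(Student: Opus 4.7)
The plan is to suppose $\Delta(G)\le 4$ and exhibit three cactus-type cycles in $G$. Let $u$ be the apex of the copy of $K_1\vee P_4$ that $G$ contains, with path $v_1v_2v_3v_4$, and write $U=\{u,v_1,v_2,v_3,v_4\}$. Since $\deg_G(u)\ge 4$ already, the bound $\Delta(G)\le 4$ forces $\deg_G(u)=4$ and $N_G(u)=\{v_1,v_2,v_3,v_4\}$. The fan triangles $T_1:=uv_1v_2$ and $T_2:=uv_3v_4$ are then edge-disjoint and share only $u$, so the edge-induced subgraph on $E(T_1)\cup E(T_2)$ already contains exactly two cycles. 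A cyclomatic-number calculation shows that any additional cycle $C$ which is edge-disjoint from $T_1\cup T_2$ and satisfies $|V(C)\cap U|\le 1$ will complete $T_1,T_2,C$ into three cactus-type cycles; the task reduces to producing such a $C$.

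Next I would rule out troublesome chords. Neither $v_1v_3$ nor $v_2v_4$ can belong to $E(G)$: each would make $\{u,v_1,v_2,v_3\}$ (respectively $\{u,v_2,v_3,v_4\}$) induce a $K_4$, which, since $|V(G)|\ge 5$, would be a proper $4$-chromatic subgraph of $G$, contradicting $4$-criticality. So the only possible chord among $v_1,\dots,v_4$ is $v_1v_4$, and consequently $G$ contains at most $3+1=4$ triangles with all vertices in $U$. The hypothesis of at least five triangles therefore guarantees an ``external'' triangle $\tau$ that uses some vertex $w\in V(G)\setminus U$.

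The easy case is $|V(\tau)\cap U|\le 1$: the edges of $\tau$ are then disjoint from $E(T_1)\cup E(T_2)$ (which only involve vertices of $U$), and $\tau$ meets $U$ in at most one vertex, so I take $C=\tau$. Otherwise $|V(\tau)\cap U|=2$ and $\tau=v_iv_jw$ for some edge $v_iv_j\in\{v_1v_2,v_2v_3,v_3v_4,v_1v_4\}$. The favourable sub-case is when two such external triangles are supported by the ``outer'' fan edges, say $v_1v_2w_a$ and $v_3v_4w_c$ with $w_a\ne w_c$; then the edge-disjoint triangles $v_1v_2w_a$, $uv_2v_3$, $v_3v_4w_c$ pairwise share exactly one distinct fan vertex (with empty triple intersection), and a cyclomatic count of $9-7+1=3$ certifies that they form three cactus-type cycles.

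The main obstacle is the remaining sub-case, where every external triangle lies over the same fan edge, most painfully $v_2v_3$ (since both its endpoints are in $U$). To handle it I would invoke $\delta(G)\ge 3$ at $w$, using that $u\notin N_G(w)$ because $N_G(u)\subseteq U$, to obtain a further neighbour $w'\notin U$, and combine this with the extra neighbours of $v_1$ and $v_4$ forced by $\delta(G)\ge 3$ and their fan-degree of two, to stitch together an external path that closes into a cycle meeting $U$ in a single vertex. Planarity together with the earlier chord exclusion keep this construction feasible; alternatively, rerunning the triangle count shows that the five-triangle assumption actually forces a second external triangle on a different fan edge, which reduces to the favourable symmetric configuration just handled.
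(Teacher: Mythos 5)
Your reductions at the start are sound and match the paper's in spirit: assume $\Delta(G)\le 4$, exclude the chords $v_1v_3,v_2v_4$ by $K_4$-freeness, observe that at most four triangles live inside $U$, and dispose of an external triangle meeting $U$ in at most one vertex, or of two external triangles sitting on $v_1v_2$ and $v_3v_4$ with distinct apexes, by exhibiting three cactus-type cycles. But the case you yourself call the main obstacle is exactly where the real content of the lemma lies, and your treatment of it has a genuine gap. First, your fallback claim that ``rerunning the triangle count forces a second external triangle on a different fan edge'' is false as stated: if $v_1v_4\in E(G)$ then four triangles already lie inside $U$, and a single external triangle $wv_2v_3$ brings the count to five, so nothing forces a second external triangle. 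Second, your main plan relies on ``the extra neighbours of $v_1$ and $v_4$ forced by $\delta(G)\ge 3$ and their fan-degree of two,'' but when the chord $v_1v_4$ is present these vertices already have degree three inside $U$ and need not have any neighbour outside $U$, so the proposed stitching has nothing to stitch with. Third, even when outside neighbours exist, the part of $G$ outside $U\cup\{w\}$ may be a forest, so an ``external path that closes into a cycle meeting $U$ in a single vertex'' need not exist at all; one must then argue, using $\delta\ge 3$ and $\Delta\le 4$ and an analysis of leaves and isolated vertices of that forest, that one either finds such a cycle, or finds a vertex of degree at least five, or lands in the exceptional graph $M'$ (which is excluded here only because it has exactly four triangles). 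That is precisely the content of the paper's Lemma~\ref{trian_grid_lemma}, which the paper invokes whenever an extra triangle contains $v_2v_3$, an edge $uv_i$, or sits on the configuration arising from $v_1v_4$; you neither prove nor cite any substitute for it, so the hard case is not closed.

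A smaller but related issue: your residual case is described as ``every external triangle lies over the same fan edge,'' which is not the complement of your favourable case. Configurations such as external triangles only over $v_1v_2$ and $v_2v_3$, triangles over $v_1v_4$, or triangles over $v_1v_2$ and $v_3v_4$ sharing the same external apex $w$ (note that the last forces the triangle $wv_2v_3$, i.e.\ a triangle on $v_2v_3$, so it again leads to the triangular-grid situation rather than to your $9-7+1$ count) are not covered by the two cases you handle explicitly. The paper's proof closes all of these by the triangle-sharing case analysis together with Lemma~\ref{fan_lemma}'s companions (Lemma~\ref{trian_grid_lemma} and, in the surrounding section, Lemma~\ref{3triangles_overlap}); your proposal would need an argument of comparable depth in place of the final sketched paragraph before it could be considered a proof.
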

\begin{proof}
Let $T_1,T_2,T_3$ be three triangles in a fan subgraph $K_1\vee P_4$ of $G$ and let $V(T_i)=\{u,v_i,v_{i+1}\}$ for $i\in\{1,2,3\}$. Note that $v_1v_3, v_2v_4\notin E(G)$ because otherwise $G$ would contain a $K_4$ which contradicts with $G$ being $4$-critical. We consider two cases.

\noindent {\it Case 1}: $v_1v_4\in E(G)$. Let $T$ be the triangle $uv_1v_4$ and let $T'$ be a triangle different from $T_1,T_2,T_3$ and $T$. If $T'$ does not share an edge with any of the triangles $T_1,T_2,T_3$ and $T$, then $T_1$, $T_3$ and $T'$ are three cactus-type triangles in $G$. If $T'$ shares an edge $uv_i$ for some $i$, then $\Delta (G)\geq deg_G(u)\geq 5$ and we are done. If $T'$ contains the edge $v_2v_3$, then the triangles $T_1,T_2,T_3$ and $T'$ form a triangular grid. By Lemma~\ref{trian_grid_lemma}, $G$ must satisfy property $\mathcal{P}$ because $G$ has at least five triangles and the graph $M^{'}$ has exactly four triangles. If $T'$ contains the edge $v_3v_4$, then the triangles $T,T',T_2,T_3$ form a triangular grid and we are done by Lemma~\ref{trian_grid_lemma} again. Similar argument works if $T'$ contains the edge $v_1v_2$ too. Lastly, suppose that $T'$ contains the edge $v_1v_4$. Now, the triangles $T,T',T_1,T_3$ form a triangular grid and the result follows from Lemma~\ref{trian_grid_lemma}.

\noindent {\it Case 2:} $v_1v_4\notin E(G)$. If there exists a triangle $T\notin\{T_1,T_2,T_3\}$ which does not share an edge with any of $T_1,T_2$ or $T_3$, then $T,T_1$ and $T_3$ are three cactus-type triangles. We may assume that every triangle $G$ shares an edge with some triangle $T_i$. If there exists a triangle $T\notin\{T_1,T_2,T_3\}$ and $T$ contains some edge $uv_i$ or $v_2v_3$, we proceed as in previous case.  Let $T$ and $T'$ be two distinct triangles different from $T_1,T_2,T_3$. We may assume that $T$ and $T'$ do not contain any of the edges $uv_i$ and $v_2v_3$ where $i\in \{1,2,3,4\}$. If both of $T$ and $T'$ contains the edge $v_1v_2$ or $v_3v_4$, then we get $deg_G(v_2)\geq 5$ or $deg_G(v_3)\geq 5$ respectively. Without loss of generality, suppose that $T$ contains the edge $v_1v_2$ and $T'$ contains the edge $v_3v_4$. Now, $T,T'$ and $T_2$ are three cactus-type triangles and we are done.

\end{proof}

\begin{figure}[h!]
\begin{center}
\begin{tikzpicture}
[scale=.75, vertices/.style={draw, fill=black, circle, minimum size = 10pt, inner sep=2pt}, another/.style={draw, fill=black, circle, minimum size = 3.5pt, inner sep=0.1pt}]
\node[another, label=right:{$u_1$}] (u1) at (0,1) {};
\node[another, label=right:{$u_2$}] (u2) at (0,-1) {};
\node[another, label=right:{$v_1$}] (v1) at (-1,0) {};
\node[another, label=right:{$v_2$}] (v2) at (1,0) {};
\node[another, label=left:{$v_3$}] (v3) at (-2,0) {};
\foreach \to/\from in {u1/u2, u1/v2, u1/v1, u1/v3, u2/v2, u2/v1, u2/v3}
\draw [-] (\to)--(\from);
\end{tikzpicture}
\hspace{.5in}
\begin{tikzpicture}
[scale=.75, vertices/.style={draw, fill=black, circle, minimum size = 10pt, inner sep=2pt}, another/.style={draw, fill=black, circle, minimum size = 3.5pt, inner sep=0.1pt}]
\node[another, label=right:{$u_1$}] (u1) at (0,1) {};
\node[another, label=right:{$u_2$}] (u2) at (0,-1) {};
\node[another, label=right:{$v_1$}] (v1) at (-.75,0) {};
\node[another, label=above:{$v_2$}] (v2) at (-3,0) {};
\node[another, label=left:{$v_3$}] (v3) at (-1.65,0) {};
\foreach \to/\from in {u1/u2, u1/v2, u1/v1, u1/v3, u2/v2, u2/v1, u2/v3}
\draw [-] (\to)--(\from);
\end{tikzpicture}
\caption{Two planar drawings of $K_2\vee \overbar{K}_3$.}
\label{3triangles_overlap_picture}
\end{center}
\end{figure}
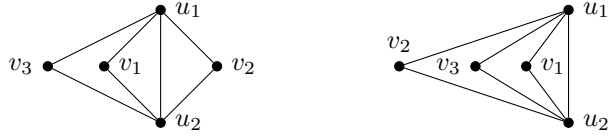

\begin{lemma}\label{3triangles_overlap} If $G$ is a $4$-critical planar graph containing $K_2\vee \overbar{K}_3$ as a subgraph, then $G$ satisfies property $\mathcal{P}$.
\end{lemma}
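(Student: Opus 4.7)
The plan is to reduce to the case $\Delta(G)\leq 4$ and then exhibit three cactus-type cycles. If $\Delta(G)\geq 5$ then property $\mathcal{P}$ holds at once, so assume $\Delta(G)\leq 4$. Since each of $u_1,u_2$ already has degree $4$ inside $K_2\vee\overbar{K}_3$, this forces $\deg_G(u_1)=\deg_G(u_2)=4$ with no neighbors outside $\{v_1,v_2,v_3\}$, while $\delta(G)\geq 3$ (as $G$ is $4$-critical) implies that each $v_i$ has one or two neighbors outside $\{u_1,u_2\}$. Two planarity restrictions are used repeatedly: no vertex $w$ outside $\{u_1,u_2,v_1,v_2,v_3\}$ can be adjacent to all of $v_1,v_2,v_3$, since otherwise $\{u_1,u_2,w\}$ and $\{v_1,v_2,v_3\}$ realize a $K_{3,3}$; and not all three of $v_1v_2,v_1v_3,v_2v_3$ can lie in $E(G)$, as that would force a $K_5$ on $\{u_1,u_2,v_1,v_2,v_3\}$.

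Within $K_2\vee\overbar{K}_3$ itself no three cactus-type cycles exist: the three triangles $u_1u_2v_i$ pairwise share the edge $u_1u_2$, and the triangle $u_1u_2v_k$ together with the $4$-cycle $u_1v_iu_2v_j$ (with $\{i,j,k\}=\{1,2,3\}$) are edge-disjoint but share both $u_1$ and $u_2$, so their union is a theta subgraph rather than a cactus. Therefore external structure must enter, and I would split on whether some $v_iv_j$ is an edge.

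\emph{Case A} (some $v_iv_j\in E(G)$, say $v_1v_2$). Take $T_1=u_1u_2v_3$ and $T_2=u_1v_1v_2$; these are edge-disjoint triangles meeting only at $u_1$. For the third cycle I would close the path $v_1u_2v_2$ (using the still-unused edges $u_2v_1,u_2v_2$) by a $v_1$-to-$v_2$ path whose internal vertices lie outside $\{u_1,u_2,v_3,v_1,v_2\}$. Such a path exists because if both $v_1$ and $v_2$ lacked external neighbors, then every external vertex would be attached to $K_2\vee\overbar{K}_3$ only via $v_3$, making $v_3$ a cut vertex and contradicting the $2$-connectivity of $4$-critical graphs. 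The planarity restrictions and $2$-connectivity then let us choose such a path so that the resulting cycle shares only $u_2$ with $T_1$ and at most one of $v_1,v_2$ with $T_2$. \emph{Case B} (no $v_iv_j$ is an edge). Each $v_i$ has at least one external neighbor, and no external vertex is adjacent to all three of them. Using $2$-connectivity I would find two internally disjoint external paths between a suitable pair of $v_i$'s, and combine them with the triangle $u_1u_2v_k$ (for the remaining index $k$) to obtain three edge-disjoint cycles: the triangle, a cycle through $u_1$ and one external path, and a cycle through $u_2$ and the other external path, with pairwise intersections of size one.

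\textbf{Main obstacle.} The delicate point is guaranteeing that the three chosen cycles pairwise share at most one vertex, so that their edge-induced union is a cactus with exactly three cycles. This becomes awkward in subcases where the external structure is minimal, for example when some $v_i$ has degree exactly three and its unique external neighbor lies on every available external path between the other two $v_j$'s. Handling such degenerate configurations requires combining $2$-connectivity with the planarity restrictions above and, if necessary, rerouting the third cycle through external neighbors of $v_3$ (in Case A) or choosing a different pair $\{i,j\}$ for the two disjoint external paths (in Case B).
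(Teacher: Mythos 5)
Your proposal has a genuine gap, and it is exactly the point you flag yourself as the ``main obstacle.'' The definition of three cactus-type cycles requires the edge-induced union of the three cycles to contain \emph{exactly} three cycles, i.e.\ the cycles must pairwise meet in at most one vertex. In your Case B the construction does not achieve this: if you take two internally disjoint external paths joining the \emph{same} pair $v_i,v_j$ and close one through $u_1$ and the other through $u_2$, the two resulting cycles share both $v_i$ and $v_j$, so their union is a theta-type graph containing more than three cycles (indeed the two paths together already form a further cycle, as does $v_iu_1v_ju_2v_i$). The claim ``with pairwise intersections of size one'' is therefore not delivered by the construction, and your closing paragraph explicitly defers the degenerate configurations rather than resolving them; so what is written is a plan, not a proof. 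A smaller point: Case A is vacuous, since $G$ is $4$-critical and properly contains the five-vertex subgraph, so it is $K_4$-free and $v_iv_j\in E(G)$ is impossible (it would create a $K_4$ on $u_1,u_2,v_i,v_j$); your planarity exclusion of ``all three $v_iv_j$'' is weaker than what criticality already gives.

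The missing idea, which is how the paper argues, is a planar \emph{separation} statement much stronger than your $K_{3,3}$/$K_5$ observations: in any plane embedding, the configuration $u_1,u_2,v_3$ separates the ``territory'' grown from the external neighbors of $v_1$ from the territory grown from the external neighbors of $v_2$, so the two induced subgraphs $H_1,H_2$ (closures of $N(v_i)\setminus\{u_1,u_2\}$ avoiding $u_1,u_2,v_3$, together with $v_3$) intersect in at most $\{v_3\}$. Since $u_1,u_2$ have no neighbors outside the configuration (else $\Delta(G)\geq 5$) and $\delta(G)\geq 3$, every vertex of $H_i$ other than $v_i,v_3$ has all of its at least three neighbors inside $H_i$, so $H_i$ is not a forest and contains a cycle $C^{(i)}$. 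Then $C^{(1)}$, $C^{(2)}$ and the triangle $u_1u_2v_1$ pairwise meet in at most one vertex ($v_3$, $v_1$, or not at all), giving the three cactus-type cycles directly, with no case analysis on paths and no rerouting. You would need to supply an argument of this separating kind to control the intersections; two-connectivity plus your two planarity restrictions do not suffice.
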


\begin{proof}
Let $u_1,u_2,v_1,v_2,v_3$ be the vertices of a $K_2\vee \overbar{K}_3$ subgraph such that $u_1u_2\in E(G)$ and $u_iv_j\in E(G)$ for $1\leq i\leq 2$ and $1\leq j\leq 3$. We may assume that $N_G(u_1)=\{u_2,v_1,v_2,v_3\}$ and $N_G(u_2)=\{u_1,v_1,v_2,v_3\}$ because otherwise we would have $deg_G(u_1)\geq 5$ or $deg_G(u_2)\geq 5$ both of which yields $\Delta (G)\geq 5$. Since $G$ is $4$-critical, $G$ is $K_4$-free. So, $v_iv_j\notin E(G)$ for every $i$ and $j$. For $i\in\{1,2\}$ we recursively define a sequence of  vertex subsets as follows:

\ \ $A_1^{(i)}=N_G(v_i)\setminus \{u_1,u_2\}$ and

\ \ $A_k^{(i)}=N_G[A_{k-1}^{(i)}]\setminus \{u_1,u_2,v_3\}$ for $k\geq 2$.

Let $l_i$ be the largest integer such that $A_{l_i}^{(i)}=A_{l_i+1}^{(i)}$. Let $H_i$ be the subgraph of $G$ induced by the vertex subset $A_{l_i}^{(i)}\cup \{v_3\}$ for $i\in \{1,2\}$. There are two different planar drawings of  $K_2\vee \overbar{K}_3$ (see Fig.~\ref{3triangles_overlap_picture}). The existence of a vertex $v$ such that $v\in V(H_1)\cap V(H_2)$ and $v\neq v_3$ violates the planarity of the graph. Hence, $V(H_1)\cap V(H_2) \subseteq \{v_3\}$. Note that $\delta (G)\geq 3$, as $G$ is $4$-critical. So the vertex $v_i$ has at least one neighbor in $V(H_i)\setminus \{v_3\}$ and every vertex $v \in V(H_i)\setminus \{v_i,v_3\}$ has at least three neighbors in $H_i$ for $i\in \{1,2\}$. Now, $H_i$ is a subgraph with at least three vertices and  $deg_{H_i}(v)\geq 3$ for every vertex $v$ in  $V(H_i)\setminus \{v_i,v_3\}$. The latter shows that $H_i$ cannot be a forest and therefore $H_i$ must have at least one cycle in it. Let $C^{(1)}$ and $C^{(2)}$ be two cycles in $H_1$ and $H_2$ respectively. Let $T$ denote the triangle $v_1u_1u_2$. Note that $V(C^{(1)})\cap V(C^{(2)})\subseteq \{v_3\}$, $V(C^{(1)})\cap V(T)\subseteq \{v_1\}$,$V(C^{(2)})\cap V(T)=\emptyset$. Now, the cycles $C^{(1)}$, $C^{(2)}$ and the triangle $T$ form three cactus-type cycles. 
\end{proof}

\begin{lemma}\label{five_triangles} If $G$ is a $4$-critical planar graph with at least five triangles, then $G$ satisfies $\mathcal{P}$.
\end{lemma}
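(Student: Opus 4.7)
The plan is to argue by contradiction: suppose $G$ violates property $\mathcal{P}$, so $\Delta(G)\le 4$ and $G$ has no three cactus-type cycles. Since $G$ is $4$-critical with at least five triangles, $G\not\cong K_4$ (as $K_4$ has only four triangles), and therefore $G$ is $K_4$-free. Lemmas~\ref{fan_lemma} and~\ref{3triangles_overlap} then let me assume that $G$ contains neither a fan $K_1\vee P_4$ nor a copy of $K_2\vee \overbar{K}_3$, since otherwise $G$ would satisfy $\mathcal{P}$ outright.

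The next step is to establish two local bounds. First, every edge of $G$ lies in at most two triangles: if $uv$ lay in three triangles, then $u$ and $v$ would have three common neighbors, which must be pairwise non-adjacent by $K_4$-freeness, producing a forbidden $K_2\vee \overbar{K}_3$. Second, every vertex lies in at most two triangles. Indeed, for a vertex $u$ the triangles through $u$ correspond to edges of $G[N(u)]$, and $G[N(u)]$ is triangle-free (else $K_4$). Since $|N(u)|\le 4$, a triangle-free graph on these neighbors with three or more edges must contain either $P_4$ or $K_{1,3}$, yielding $K_1\vee P_4$ (apex $u$) or $K_2\vee \overbar{K}_3$ (sides $\{u,\text{center}\}$ and the three leaves) respectively; both are excluded.

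Now consider the auxiliary graph $\mathcal{A}$ on the triangles of $G$ in which two triangles are adjacent when they share an edge. Because any two distinct edges of the same triangle meet at a common vertex, if $T_2$ shared edges with both $T_1$ and $T_3$, some vertex of $T_2$ would lie in all three triangles, violating the vertex bound. Hence every component of $\mathcal{A}$ has size at most two, and the five triangles split into singletons and matched pairs in one of the patterns $(5,0)$, $(3,1)$, or $(1,2)$. In each of these patterns I can select three pairwise edge-disjoint triangles $T, T', T''$ (for instance, taking one representative from each component, preferring singletons when available).

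It remains to show that $T, T', T''$ form three cactus-type cycles, which will contradict the assumption. Each pair among them shares at most one vertex by edge-disjointness; if two of these shared vertices coincided then that vertex would lie in three triangles, and if all three pairwise-shared vertices $a,b,c$ were distinct, then the edges $ab,bc,ca$ provided by the three triangles would form a fourth triangle $abc$, again forcing $a$ into three triangles. Either outcome contradicts the vertex bound, so at least one pair among $T,T',T''$ is vertex-disjoint. A cyclomatic-number computation on the edge-induced union (with $9$ edges, $9-s$ vertices for total shared-vertex count $s\in\{0,1,2\}$, and number of components determined by $s$) then yields exactly three independent cycles in every remaining case, so $T,T',T''$ indeed form three cactus-type cycles. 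The main obstacle is this final verification: ruling out the ``triangular'' vertex-sharing pattern is precisely where the vertex-in-at-most-two-triangles bound, and hence the exclusions of $K_1\vee P_4$ and $K_2\vee\overbar{K}_3$, are indispensable.
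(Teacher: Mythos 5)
Your proposal is correct, but it is organized quite differently from the paper's argument, and the comparison is instructive. The paper anchors its case analysis on a single shared edge: if no two triangles share an edge it immediately declares three cactus-type cycles, and otherwise it takes two triangles $T_1,T_2$ sharing an edge $uv$, uses Lemmas~\ref{3triangles_overlap} and \ref{fan_lemma} (and $\Delta\geq 5$) to force every further triangle to avoid $E(T_1)\cup E(T_2)$ and the vertices $u,v$, and then extracts three pairwise edge-disjoint triangles by a short ad hoc argument on $T_3,T_4,T_5$. You instead convert the same two forbidden-subgraph lemmas into two clean local bounds --- under $\Delta(G)\le 4$ and $K_4$-freeness, every edge lies in at most two triangles and every vertex lies in at most two triangles --- and then the auxiliary graph on triangles (adjacent when sharing an edge) has maximum degree at most one, so among five or more triangles you can always pick three pairwise edge-disjoint ones. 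What your route buys is a more explicit verification of the definition of three cactus-type cycles: you rule out the configuration in which the three pairwise shared vertices are distinct (which would create a fourth triangle and extra cycles), using the vertex bound; the paper's proof handles this only implicitly, since in its setting both intersections with $T_1$ are confined to the single vertex $v_1$, and its very first sentence (five pairwise edge-disjoint triangles yield three cactus-type cycles) is asserted without this check. Two small points to tighten: note explicitly that the fourth triangle $abc$ is distinct from $T,T',T''$ (immediate, since e.g.\ $a\in T$ would force $a=c$), and replace the cyclomatic-number count by the observation that in the surviving configurations every shared vertex is a cut vertex of the union, so each cycle lies in a block and the union has exactly three cycles --- ``three independent cycles'' is weaker than what the definition of cactus-type cycles requires.
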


\begin{proof}
 If there are no two triangles in $G$ sharing a common edge, then $G$ must have three cactus-type cycles, as $G$ has at least five triangles. Now we may assume that there exist two triangles, say $T_1$ and $T_2$, having a common edge. Let $V(T_1)=\{v_1,u,v\}$ and $V(T_2)=\{v_2,u,v\}$. Note that $v_1v_2\notin E(G)$, since $G$ is $4$-critical and hence cannot contain a $K_4$. We may assume that every triangle $T$ in $G$ different from $T_1$ and $T_2$ satisfies the following:

 (i) $T$ does not share an edge with either of the triangles $T_1$ and $T_2$, and

(ii) $T$ does not contain any of the end-vertices of the shared edge $uv$ of $T_1$ and $T_2$

\noindent because if $T$ contains the edge $uv$, then $G$ contains a $K_2\vee \overbar{K}_3$ and the results follow from Lemma~\ref{3triangles_overlap}; if $T$ contains any of the edges $uv_1, uv_2, vv_1, vv_2$, then $G$ contains a $K_1\vee P_4$ and the result  follows from Lemma~\ref{fan_lemma}; if $T$ satisfies (i) and contains any of the vertices $u$ or $v$, then $\Delta (G)\geq 5$.  Let $T_3$ and $T_4$ be two other triangles of $G$ different from $T_1$ and $T_2$. If $T_3$ and $T_4$ are edge-disjoint triangles, then the triangles $T_1,T_3$ and $T_4$ form three cactus-type cycles in $G$.  Now we may assume that the triangles $T_3$ and $T_4$ share a common edge. Let $T_5\notin\{T_1,T_2,T_3,T_4\}$ be another triangle. Repeating the earlier argument, we may assume that $T_5$ does not share an edge with either of the triangles $T_3$ and $T_4$. Now,  $T_1,T_3$ and $T_5$ are three cactus-type cycles.
\end{proof}

\begin{theorem}\label{4crit_prop_P_thm}
Let $G$ be a $4$-critical planar graph different from $K_4$, $M$ and $M^{'}$. Then, $G$ satisfies property $\mathcal{P}$.
\end{theorem}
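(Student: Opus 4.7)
The plan is to combine the Grünbaum--Aksenov Theorem with the two separate structural results established earlier in the paper. Since $G$ is a $4$-critical planar graph, it is in particular $4$-chromatic and planar, so by the Grünbaum--Aksenov Theorem (stated at the opening of Section~7) it must contain at least four triangles. This gives a clean dichotomy: either $G$ has exactly four triangles, or $G$ has at least five.

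First I would handle the case when $G$ has exactly four triangles. Here I invoke Corollary~\ref{lidicky_corollary} directly: it states that every planar $4$-critical graph with exactly four triangles, except $K_4$, $M$, and $M^{'}$, satisfies property $\mathcal{P}$. Since the hypothesis excludes these three exceptional graphs, we are done in this case.

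Next I would handle the case when $G$ has at least five triangles. Here Lemma~\ref{five_triangles} applies verbatim, since its hypotheses (namely, $G$ is a $4$-critical planar graph with at least five triangles) are exactly what we have. The lemma's conclusion is precisely that $G$ satisfies $\mathcal{P}$.

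Since the two cases are exhaustive and neither involves any additional obstruction, the theorem follows immediately. There is no hard step here: all the substantive work has already been done in Corollary~\ref{lidicky_corollary} (which unpacks the Lidický et al.\ classification and handles the diamond/critical-patch expansions) and in Lemma~\ref{five_triangles} (which analyzes possible overlaps of triangles via Lemmas~\ref{fan_lemma} and~\ref{3triangles_overlap}). The theorem is essentially a two-line consequence that consolidates these two results under the single umbrella of property $\mathcal{P}$.
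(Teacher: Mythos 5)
Your proposal matches the paper's proof exactly: invoke the Grünbaum--Aksenov Theorem to guarantee at least four triangles, apply Corollary~\ref{lidicky_corollary} in the case of exactly four triangles (the excluded graphs $K_4$, $M$, $M^{'}$ being ruled out by hypothesis), and apply Lemma~\ref{five_triangles} when there are at least five. The argument is correct and complete.
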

\begin{proof}
By Gr{\"u}nbaum-Aksenov Theorem, $G$ has at least four triangles. If $G$ has exactly four triangles, then the result follows from Corollary~\ref{lidicky_corollary}. If $G$ has at least five triangles, then the result follows from Lemma~\ref{five_triangles}.
\end{proof}

\section{$4$-chromatic graphs containing the Moser spindle}

\begin{figure}[h!]
\begin{tikzpicture}
[scale=1.25, vertices/.style={draw, fill=black, circle, minimum size = 10pt, inner sep=2pt}, another/.style={draw, fill=black, circle, minimum size = 3.5pt, inner sep=0.1pt}]
\node[another, label=left:{$e$}] (e) at (-1,0) {};
\node[another, label=above:{$a$}] (a) at (0,1) {};
\node[another, label=below:{$a'$}] (a') at (0,.5) {};
\node[another, label=above:{$b$}] (b) at (0,-.5) {};
\node[another, label=below:{$b'$}] (b') at (0,-1) {};
\node[another, label=right:{$d$}] (d) at (1,.25) {};
\node[another, label=below:{$c$}] (c) at (1,-.25) {};
\node[another, label=above:{$u_1$}] (u1) at (1,1) {};
\node[another, label=above:{$u_t$}] (ut) at (2,1) {};
\node[another, label=below:{$v_1$}] (v1) at (1.75,-.25) {};
\node[another, label=below:{$v_k$}] (vk) at (2.75,-.25) {};
\foreach \to/\from in {e/a, e/a', e/b, e/b', a/a', b/b', a/d, a'/d, b/c, b'/c, c/d, a/u1, c/v1}
\draw [-] (\to)--(\from);
\node[label=below:{$\cdots$}](1) at (1.5,1.25){};
\node[label=below:{$\cdots$}](1) at (2.25,0){};
\node[label=above:{$G_1$}](G1) at (0,1.5){};
\end{tikzpicture}
\hspace{.7in}
\begin{tikzpicture}
[scale=1.25, vertices/.style={draw, fill=black, circle, minimum size = 10pt, inner sep=2pt}, another/.style={draw, fill=black, circle, minimum size = 3.5pt, inner sep=0.1pt}]
\node[another, label=left:{$e$}] (e) at (-1,0) {};
\node[another, label=above:{$a$}] (a) at (0,1) {};
\node[another, label=below:{$a'$}] (a') at (0,.5) {};
\node[another, label=above:{$b'$}] (b') at (0,-.5) {};
\node[another, label=below:{$b$}] (b) at (0,-1) {};
\node[another, label=right:{$d$}] (d) at (1,.25) {};
\node[another, label=below:{$c$}] (c) at (1,-.25) {};
\node[another, label=above:{$u_1$}] (u1) at (1,1) {};
\node[another, label=above:{$u_t$}] (ut) at (2,1) {};
\node[another, label=below:{$v_1$}] (v1) at (1,-1) {};
\node[another, label=below:{$v_k$}] (vk) at (2,-1) {};
\foreach \to/\from in {e/a, e/a', e/b, e/b', a/a', b/b', a/d, a'/d, b/c, b'/c, c/d, a/u1, b/v1}
\draw [-] (\to)--(\from);
\node[label=below:{$\cdots$}](1) at (1.5,1.25){};
\node[label=below:{$\cdots$}](1) at (1.5,-.75){};
\node[label=above:{$G_2$}](G2) at (0,1.5){};
\end{tikzpicture}
\caption{Graphs $G_1$, $G_2$ of order $n$ where $0\leq k,t\leq n-7$ and $k+t=n-7$.}
\label{two_diamond_figure}
\end{figure}
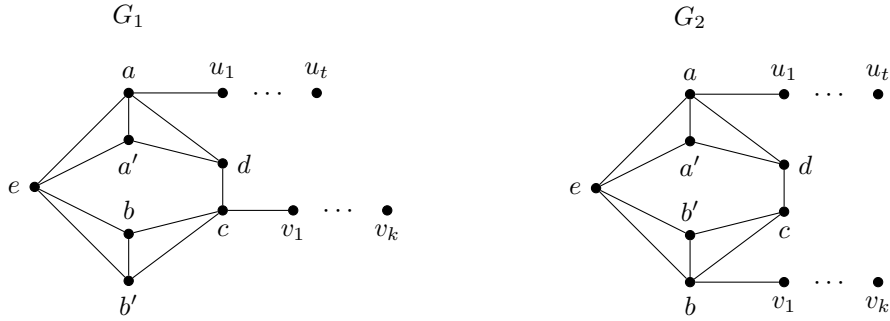

\begin{lemma}\label{G1_lemma} For every $n\geq 7$, 
$\rho(G_1)<\rho (K_4^{(n)})$ where $G_1$ is the graph shown in Fig.~\ref{two_diamond_figure}.
\end{lemma}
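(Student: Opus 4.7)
The plan is to adapt the Perron-vector comparison used in the proofs of Lemmas~\ref{broom_kite_lemma}, \ref{saw_3_0}, and \ref{saw_2_1}. Let $D = D(K_4^{(n)})$, $D' = D(G_1)$, $\rho' = \rho(G_1)$, and let $x$ be the Perron vector of $G_1$. It suffices to show $x^T(D - D')x > 0$, since then the Rayleigh--Ritz inequality yields $\rho(K_4^{(n)}) - \rho' \geq x^T(D-D')x / (x^T x) > 0$.

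First I would identify the twin pair in $G_1$: the vertices $b$ and $b'$ have the same open neighborhood $\{e, c\}$, so $x_b = x_{b'}$. Using the labellings of Figures~\ref{kite_picture} and~\ref{two_diamond_figure}, I would compute the pairwise distances in both graphs and write $D - D'$ entry-by-entry; along the two attached paths the distances essentially match, while near the $K_4$ end the kite has strictly shorter distances than $G_1$ does near its Moser-spindle end. After substituting $x_b = x_{b'}$, the quantity $\tfrac{1}{2}\,x^T(D-D')x$ should collapse to a sum of manifestly positive products of Perron entries, together with one or two ``critical'' subexpressions of the shape $-c\,x_v + \sum_i x_i$ where the sum runs over path vertices and $v$ is a vertex of the Moser spindle.

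To dispatch a critical subexpression, I would apply the eigenequation $D'x = \rho' x$ at the vertices occurring in it and rewrite $\rho'$ times the subexpression as a linear combination $\sum_i f_i(n,t,k)\,x_i$. Positivity then reduces to checking $f_i(n,t,k) \geq 0$ for every index $i$. As in Lemmas~\ref{saw_3_0} and~\ref{saw_2_1}, for interior path vertices the coefficient $f_i$ is expected to be a quadratic polynomial in $n$ with positive leading coefficient, whose real roots can be bounded linearly in the path index, forcing $f_i > 0$ for all $n$ above an explicit threshold; outside a small window of indices the discriminant will be negative and positivity is automatic.

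The main obstacle is that $G_1$ is a two-parameter family with $t + k = n - 7$, so the positivity argument must be uniform in the split $(t, k)$. I expect the coefficient analysis to split naturally according to whether the index $i$ lies in the $u$-path, the $v$-path, or the seven-vertex Moser spindle; the extremal splits $t = 0$ and $k = 0$ produce an additional twin pair $\{a, a'\}$ which only simplifies the computation. Small values of $n$ outside the range of the asymptotic bound will be dispatched by direct numerical calculation in the spirit of Table~\ref{table_saw_kite}.
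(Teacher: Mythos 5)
Your overall framework (Rayleigh quotient with the Perron vector of $G_1$, twin vertices, eigenequation to settle the sign of a critical subexpression) is the right toolbox, but the proposal misses the one idea that makes the paper's proof work, and the step you base everything on is false as stated. You propose a \emph{direct} comparison of $G_1$ with $K_4^{(n)}$ and assert that ``along the two attached paths the distances essentially match.'' They do not: $G_1$ has two paths, of lengths $t$ and $k$ with $t+k=n-7$, attached at two different vertices, while the kite has a single path. Under any bijection of vertex sets, pairs consisting of one vertex from each path of $G_1$ must be placed on the single path of the kite; with the natural concatenation (say the $u$-path beyond the $v$-path), the pair $(u_i,v_j)$ has distance about $k+i-j$ in the kite versus $i+j+2$ in $G_1$, so the entry of $D-D'$ is about $k-2j$, which is negative of order $k$ for $j$ near $k$ --- and these negative entries are weighted by the largest Perron coordinates (those at the deep ends of the paths). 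So $\tfrac12 x^T(D-D')x$ does not collapse to manifestly positive products plus one or two critical subexpressions; the sign analysis would depend on the split $(t,k)$ in an essential and unresolved way, and your planned quadratic-in-$n$ coefficient analysis does not address these cross-path terms at all.

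The paper avoids this entirely by inserting the intermediate graph $K_4^{(t,k)}$: the $K_4$ with the \emph{same} two paths, of lengths $t$ and $k$, attached at two of its vertices. Lemma~\ref{godsil} gives $\rho(K_4^{(t,k)})\leq\rho(K_4^{(n)})$, so it suffices to prove $\rho(G_1)<\rho(K_4^{(t,k)})$; now both graphs have identical path structure, only the distance between $d$ and $e$ decreases (by $1$) while all other distances increase or stay the same, and the whole problem reduces to showing $x_{b'}+3x_c-x_e>0$, which two applications of the eigenequation settle with coefficients that are constants or linear in the path indices --- no quadratic root analysis, no case split over $(t,k)$, and no numerical table for small $n$ is needed. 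If you want to rescue your plan, this reduction (or some equivalent device that equalizes the path structure on both sides before comparing) is the missing ingredient. A minor point: in $G_1$ the vertices $b$ and $b'$ are adjacent, so they are twins in the sense $N[b]=N[b']$, not $N(b)=N(b')=\{e,c\}$; the conclusion $x_b=x_{b'}$ is still correct.
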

\begin{proof}
By Lemma~\ref{godsil}, $\rho (K_4^{(t,k)})\leq \rho (K_4^{(n)})$, so it suffices to show that show that $\rho(G_1)<\rho (K_4^{(t,k)})$. We consider a labelling of the vertices of $K_4^{(t,k)}$ given in Fig.~\ref{G1-comp}.
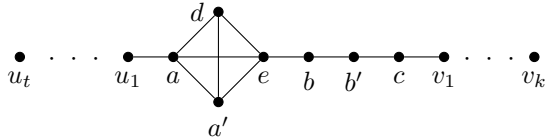
\begin{figure}[h!]
\begin{center}
\begin{tikzpicture}
[scale=1.2, vertices/.style={draw, fill=black, circle, minimum size = 10pt, inner sep=2pt}, another/.style={draw, fill=black, circle, minimum size = 3.5pt, inner sep=0.1pt}]
\node[another, label=left:{$d$}] (d) at (0,1) {};
\node[another, label=below:{$a'$}] (a') at (0,0) {};
\node[another, label=below:{$e$}] (e) at (.5,.5) {};
\node[another, label=below:{$a$}] (a) at (-.5,.5) {};
\node[another, label=below:{$b$}] (b) at (1,.5) {};
\node[another, label=below:{$b'$}] (b') at (1.5,.5) {};
\node[another, label=below:{$c$}] (c) at (2,.5) {};
\node[another, label=below:{$v_1$}] (v1) at (2.5,.5) {};
\node[another, label=below:{$v_k$}] (vk) at (3.5,.5) {};
\node[another, label=below:{$u_1$}] (u1) at (-1,.5) {};
\node[another, label=below:{$u_t$}] (ut) at (-2.2,.5) {};
\foreach \to/\from in {a/a', a/e, a'/e, a/d, a'/d, d/e, e/b, b/b', b'/c, c/v1, a/u1}
\draw [-] (\to)--(\from);
\node[label={$\cdot \  \cdot \  \cdot $}](1) at (3, .2){};
\node[label={$\cdot \  \cdot \  \cdot $}](1) at (-1.6, .2){};
\end{tikzpicture}
\caption{The graph $K_4^{(t,k)}$ with a particular labelling of its vertices to be compared to $G_1$.}
\label{G1-comp}
\end{center}

\end{figure}
 As we pass from $G_1$ to $K_4^{(t,k)}$, the distance between the vertices $d$ and $e$ decrease by $1$ and the distances between all the rest of the vertices increase or stay the same. In particular, the distance between $d$ and $b'$ increases by $1$ and the distance between $d$ and $c$ increases by $3$. Hence, for the Perron vector $x$ of $G_1$, we get
$$x^T\left(D(K_4^{(t,k)})-D(G_1)\right)x>x_d\,\left(x_{b'}+3x_c-x_e\right).$$
Note that  $x_b=x_{b'}$, since $b$ and $b'$ are twin vertices of $G_1$. By  $D(G_1)x=\rho(G_1)x$, 
$$\rho (G_1)\left(x_{b'}+3x_c-x_e\right)=7(x_a+x_{a'})+5x_b-x_c+3x_d+7x_e+\sum_{i=1}^t(3i+7)x_{u_i}+\sum_{i=1}^{k}(3i-1)x_{v_i}.$$ To show that the latter is positive, it suffices to check that $5x_{b}-x_c+7x_e>0$. Using the eigenequation again, we find that $\rho (G_1)(5x_{b}-x_c+7x_e)$ is equal to
$$15(x_a+x_{a'})+17x_b+19x_c+23x_d+3x_e+\sum_{i=1}^t(11i+15)x_{u_i}+\sum_{i=1}^{k}(11i+19)x_{v_i}$$
which is clearly positive. Thus, $x^T\left(D(K_4^{(t,k)})-D(G_1)\right)x>0$ and the result follows.
\end{proof}

\begin{lemma}\label{G2_lemma} For every $n\geq 7$, 
$\rho(G_2)<\rho (K_4^{(n)})$ where $G_2$ is the graph shown in Fig.~\ref{two_diamond_figure}.
\end{lemma}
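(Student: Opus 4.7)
The plan is to mirror the proof of Lemma~\ref{G1_lemma}. First, introduce an intermediate graph $K_4^{(t,k)}$ obtained from $K_4$ (on a chosen four-vertex subset such as $\{a,a',d,e\}$) by attaching the path $u_1,\ldots,u_t$ at $a$ and a second path of length $k+3$ at the adjacent vertex $e$ that absorbs $\{b,b',c\}$ together with the $v$-path. Two successive applications of Lemma~\ref{godsil} at adjacent vertices of $K_4$ then yield $\rho(K_4^{(t,k)}) \leq \rho(K_4^{(n)})$, so it suffices to prove $\rho(G_2) < \rho(K_4^{(t,k)})$.

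For the latter, I would let $x$ be the Perron vector of $G_2$, set $\rho' = \rho(G_2)$, and take $D = D(K_4^{(t,k)})$ and $D' = D(G_2)$ in a common vertex labeling. In passing from $G_2$ to $K_4^{(t,k)}$, the new edge $de$ decreases $d(d,e)$ by $1$, while the removal of the remaining Moser-spindle edges and the redirection of the $v$-path can only increase the other pairwise distances. Hence $\tfrac{1}{2} x^T(D - D')x$ collects into $-x_d x_e$ plus a large sum of products $x_i x_j$ with manifestly positive coefficients. To handle the offending $-x_d x_e$ term, I would apply the eigenequation $D' x = \rho' x$ at the critical coordinate(s) in order to rewrite a quantity of the form $-x_e + \cdots$ as $\rho'^{-1}$ times a non-negative combination of the $x_i$'s. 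If one application is not enough to render every coefficient non-negative, iterate the eigenequation once more, exactly as the $G_1$ proof did to handle $5x_b - x_c + 7x_e$ via a second substitution.

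The main obstacle compared to Lemma~\ref{G1_lemma} is that $G_2$ has \emph{no} twin vertex pair: the paths attached at $a$ and at $b$ break both of the twin relations $\{a,a'\}$ and $\{b,b'\}$ inherited from the Moser spindle. Consequently, no Perron coordinates coincide, so the quadratic form $x^T(D-D')x$ retains more independent variables, and the non-negativity of each coefficient surviving the eigenequation substitution must be verified individually (this is where a second iteration may be necessary). As in the earlier lemmas, small values of $n$ (say $n \leq 12$) should be handled separately by direct numerical computation, analogous to Table~\ref{table_saw_kite}.
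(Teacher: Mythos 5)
Your proposal follows essentially the same route as the paper: reduce to the intermediate graph $K_4^{(t,k)}$ via Lemma~\ref{godsil}, compare $D(K_4^{(t,k)})$ with $D(G_2)$ using the Perron vector of $G_2$, isolate the single negative contribution $-x_dx_e$ against the increased distances from $d$ (the paper bounds the quadratic form below by $x_d\left(2x_b+2x_c-x_e\right)$), and prove positivity of the bracket by the eigenequation. In the paper a single application of the eigenequation already produces an expression with all positive coefficients, so neither the second iteration you allow for nor a separate numerical check for small $n$ is needed.
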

\begin{proof}
As in Lemma~\ref{G1_lemma}, it suffices to show that $\rho(G_2)<\rho (K_4^{(t,k)})$. This time we consider the labelling of the vertices of $K_4^{(t,k)}$ given in Fig.~\ref{G3-comp}.  

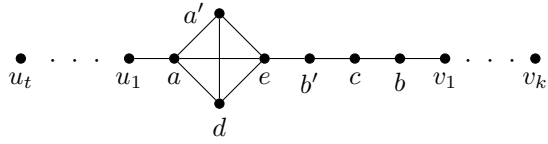
\begin{figure}[h!]
\begin{center}
\begin{tikzpicture}
[scale=1.2, vertices/.style={draw, fill=black, circle, minimum size = 10pt, inner sep=2pt}, another/.style={draw, fill=black, circle, minimum size = 3.5pt, inner sep=0.1pt}]
\node[another, label=left:{$a'$}] (a') at (0,1) {};
\node[another, label=below:{$d$}] (d) at (0,0) {};
\node[another, label=below:{$e$}] (e) at (.5,.5) {};
\node[another, label=below:{$a$}] (a) at (-.5,.5) {};
\node[another, label=below:{$b'$}] (b') at (1,.5) {};
\node[another, label=below:{$c$}] (c) at (1.5,.5) {};
\node[another, label=below:{$b$}] (b) at (2,.5) {};
\node[another, label=below:{$v_1$}] (v1) at (2.5,.5) {};
\node[another, label=below:{$v_k$}] (vk) at (3.5,.5) {};
\node[another, label=below:{$u_1$}] (u1) at (-1,.5) {};
\node[another, label=below:{$u_t$}] (ut) at (-2.2,.5) {};
\foreach \to/\from in {a/a', a/e, a'/e, a/d, a'/d, d/e, e/b', b/b', b'/c, c/v1, a/u1}
\draw [-] (\to)--(\from);
\node[label={$\cdot \  \cdot \  \cdot $}](1) at (3, .2){};
\node[label={$\cdot \  \cdot \  \cdot $}](1) at (-1.6, .2){};
\end{tikzpicture}
\caption{The graph $K_4^{(t,k)}$ with a particular labelling of its vertices to be compared to $G_3$.}
\label{G3-comp}
\end{center}
\end{figure}
As we pass from $G_2$ to $K_4^{(t,k)}$, the distance between the vertices $d$ and $e$ decreases by $1$ and the distances between all the other vertices increase or stay the same. In particular, the distance between the vertices $d$ and $b$, and the distance between $d$ and $c$ increase by $2$. Hence, for the Perron vector $x$ of $G_2$, we get
$$x^T\left(D(K_4^{(t,k)})-D(G_3)\right)x>x_{d}\,\left(2x_{b}+2x_c-x_{e}\right).$$
Now it suffices to check that $\left(2x_{b}+2x_c-x_{e}\right)>0$.  By the eigenequation $D(G_2)x=\rho (G_2)x$, we calculate that $\rho (G_2)(2x_b+2x_c-x_e)$ is equal to
$$7(x_a+x_{a'})+x_b+3x_{b'}+4x_d+6x_e+\sum_{i=1}^k(3i+1)x_{v_i}+\sum_{i=1}^t(3i+7)x_{u_i}$$
which is clearly positive.
\end{proof}

\section {$4$-chromatic graphs containing the Mycielskian of a triangle}

We begin with the graph $M_1^{'}$ in Fig.~\ref{myciel_fig_1} which is obtained from $M^{'}$ by attaching paths at three nonadjacent vertices in  $M^{'}$. Note that the graph $M_1^{'}$ is isomorphic to $M^{'}$ when $r=s=t=1$.
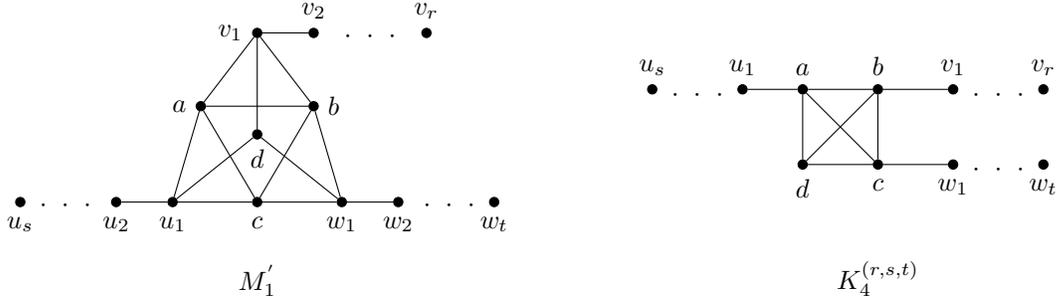
\begin{figure}[h!]
\begin{center}
\begin{tikzpicture}
[scale=1.5, vertices/.style={draw, fill=black, circle, minimum size = 10pt, inner sep=2pt}, another/.style={draw, fill=black, circle, minimum size = 3.5pt, inner sep=0.1pt}]
\node[another, label=left:{$v_1$}] (v1) at (0,1) {};
\node[another, label=above:{$v_2$}] (v2) at (.5,1) {};
\node[another, label=above:{$v_r$}] (vr) at (1.5,1) {};
\node[another, label=left:{$a$}] (a) at (-.5,.35) {};
\node[another, label=right:{$b$}] (b) at (.5,.35) {};
\node[another, label=below:{$d$}] (d) at (0,.1) {};
\node[another, label=below:{$c$}] (c) at (0,-.5) {};
\node[another, label=below:{$u_1$}] (u1) at (-.75,-.5) {};
\node[another, label=below:{$u_2$}] (u2) at (-1.25,-.5) {};
\node[another, label=below:{$u_s$}] (us) at (-2.1,-.5) {};
\node[another, label=below:{$w_1$}] (w1) at (.75,-.5) {};
\node[another, label=below:{$w_2$}] (w2) at (1.25,-.5) {};
\node[another, label=below:{$w_t$}] (wt) at (2.1,-.5) {};
\foreach \to/\from in {a/b, a/v1, b/v1, a/c, b/c, d/v1,d/u1,a/u1,c/u1,c/w1,b/w1,d/w1,v1/v2,w1/w2,u1/u2}
\draw [-] (\to)--(\from);
\node[label={$\cdot \ \cdot \ \cdot$}](1) at (1,.75){};
\node[label={$\cdot \ \cdot \ \cdot$}](1) at (1.7,-.75){};
\node[label={$\cdot \ \cdot \ \cdot$}](1) at (-1.7,-.75){};
\node[label={$M^{'}_1$}](1) at (0,-1.5){};
\end{tikzpicture}
\hspace{.5in}
\begin{tikzpicture}
[scale=2, vertices/.style={draw, fill=black, circle, minimum size = 10pt, inner sep=2pt}, another/.style={draw, fill=black, circle, minimum size = 3.5pt, inner sep=0.1pt}]
\node[another, label=above:{$v_1$}] (v1) at (1,0) {};
\node[another, label=above:{$v_r$}] (vr) at (1.6,0) {};
\node[another, label=above:{$a$}] (a) at (0,0) {};
\node[another, label=above:{$b$}] (b) at (.5,0) {};
\node[another, label=below:{$d$}] (d) at (0,-.5) {};
\node[another, label=below:{$c$}] (c) at (.5,-.5) {};
\node[another, label=above:{$u_1$}] (u1) at (-.4,0) {};
\node[another, label=above:{$u_s$}] (us) at (-1,0) {};
\node[another, label=below:{$w_1$}] (w1) at (1,-.5) {};
\node[another, label=below:{$w_t$}] (wt) at (1.6,-.5) {};
\foreach \to/\from in {a/b,  b/v1, a/c, b/c,a/u1,c/w1,d/a,d/b,d/c}
\draw [-] (\to)--(\from);
\node[label={$\cdot \ \cdot \ \cdot$}](1) at (1.3,-.2){};
\node[label={$\cdot \ \cdot \ \cdot$}](1) at (1.3,-.7){};
\node[label={$\cdot \ \cdot \ \cdot$}](1) at (-.7,-.2){};
\node[label={$K_4^{(r,s,t)}$}](1) at (.5,-1.5){};
\end{tikzpicture}
\caption{The graphs $M^{'}_1$ and $K_4^{(r,s,t)}$ where $ r,s,t\geq 1$ and $r+s+t=n-7$.}
\label{myciel_fig_1}
\end{center}
\end{figure}

\begin{lemma}\label{myciel_lem1} For every $n\geq 7$, $\rho(M_1^{'})<\rho(K_4^{(n)})$.
\end{lemma}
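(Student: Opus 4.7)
The plan is to sandwich $\rho(M_1')$ using the auxiliary graph $K_4^{(r,s,t)}$ of Fig.~\ref{myciel_fig_1}: I will show that $\rho(M_1')<\rho(K_4^{(r,s,t)})$ and that $\rho(K_4^{(r,s,t)})<\rho(K_4^{(n)})$. The second step is routine. The three paths of $K_4^{(r,s,t)}$ are attached at the pairwise adjacent vertices $a,b,c$ of the underlying $K_4$, so by Lemma~\ref{godsil} I may first drain the path at $b$ into the path at $a$ (using the adjacent pair $a,b$) to reach a graph with a single path of length $r+s$ at $a$ and the $c$-path intact, and then drain the $c$-path into the long $a$-path (using the adjacent pair $a,c$) to arrive at a single path of length $r+s+t$ attached to $K_4$, i.e.\ at $K_4^{(n)}$; each merging strictly increases $\rho$.

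For the substantive first step, set $D=D(K_4^{(r,s,t)})$ and $D'=D(M_1')$, let $x$ be the unit Perron vector of $D'$ and $\rho'=\rho(M_1')$. By the Rayleigh principle $\rho(K_4^{(r,s,t)})-\rho' \ge x^T(D-D')x$, so it suffices to prove $x^T(D-D')x>0$. A direct enumeration of pairwise distances shows the only nonzero entries of $D-D'$ are $-1$ at $(a,d),(b,d),(c,d)$ (the distance drops from $2$ to $1$), $+1$ at $(a,v_i)$ and $(d,v_i)$ for $1\le i\le r$, $+1$ at $(c,u_i)$ and $(d,u_i)$ for $1\le i\le s$, $+1$ at $(b,w_i)$ and $(d,w_i)$ for $1\le i\le t$, and $+1$ at every cross-path pair $(v_i,u_j),(v_i,w_j),(u_i,w_j)$. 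Writing $V=\sum_i x_{v_i}$, $U=\sum_i x_{u_i}$, $W=\sum_i x_{w_i}$ and $\Sigma=x_a+x_b+x_c$, these collapse to
$$\tfrac{1}{2}x^T(D-D')x \; = \; x_d\bigl[(V+U+W)-\Sigma\bigr] \,+\, (x_aV+x_cU+x_bW) \,+\, (VU+VW+UW).$$

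The last two groups are manifestly positive, while the first group has ambiguous sign. To resolve it, I will apply the eigenequation $D'x=\rho' x$ at the six vertices $v_1,u_1,w_1,a,b,c$; summing the first three equations and the last three equations and subtracting, the coefficients of every $x_{v_i},x_{u_i},x_{w_i}$ cancel and the clean identity
$$\rho'\bigl[(x_{v_1}+x_{u_1}+x_{w_1})-\Sigma\bigr] \;=\; 2\Sigma - 3x_d$$
emerges. If $2\Sigma\ge 3x_d$, then $V+U+W\ge x_{v_1}+x_{u_1}+x_{w_1}\ge \Sigma$, every term in the displayed quadratic form is nonnegative, and strict positivity follows from $VU>0$. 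If $2\Sigma<3x_d$, then the deficit $\Sigma-(V+U+W)$ is at most $(3x_d-2\Sigma)/\rho'$, and the task reduces to proving $\rho'[x_aV+x_cU+x_bW+VU+VW+UW]>x_d(3x_d-2\Sigma)$; I would verify this by a second application of the eigenequation (now at $a,b,c,d$) and termwise nonnegativity on the resulting polynomial in the Perron coordinates, exactly in the spirit of the nested eigenequation reductions used in the proofs of Lemmas~\ref{G1_lemma} and~\ref{G2_lemma}. The main obstacle I anticipate is this second case: the expanded coefficients depend on the three path-length parameters and on the individual path indices $1\le i\le r$, $1\le j\le s$, $1\le k\le t$, so careful bookkeeping is required to verify that every coefficient in the final polynomial is nonnegative uniformly in $n$; small values of $n$ may need to be handled separately by a finite table of computed spectral radii, as in Lemma~\ref{broom_kite_lemma}.
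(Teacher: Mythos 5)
Your overall route is the same as the paper's: compare $M_1'$ with $K_4^{(r,s,t)}$ through the Rayleigh quotient of the Perron vector of $M_1'$, then pass from $K_4^{(r,s,t)}$ to $K_4^{(n)}$ by Lemma~\ref{godsil} (the $k=l$ ambiguity in that lemma is harmless here, since the two attachment vertices are interchangeable by an automorphism of the base graph at each merging step). Your enumeration of the entries of $D-D'$ is correct, the identity $\rho'\bigl[(x_{v_1}+x_{u_1}+x_{w_1})-\Sigma\bigr]=2\Sigma-3x_d$ with $\Sigma=x_a+x_b+x_c$ is exactly the relation the paper exploits, and your Case~A ($2\Sigma\ge 3x_d$) is complete and correct.

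The gap is Case~B ($2\Sigma<3x_d$), which is precisely where the content of the lemma lies and which you leave as a programme rather than a proof. The inequality you reduce it to, $\rho'\bigl[x_aV+x_cU+x_bW+VU+VW+UW\bigr]>x_d(3x_d-2\Sigma)$, is quadratic in the Perron coordinates, whereas the nested eigenequation device of Lemmas~\ref{G1_lemma} and~\ref{G2_lemma} operates on linear combinations; it gives no direct handle on terms like $x_d^2$ or $VU$, so the plan as described does not obviously close, and the fallback to numerical tables for small $n$ is not carried out either. In fact no case distinction is needed: apply the eigenequation to $2\Sigma-3x_d$ itself. A short computation (with the sums running over the vertices of the three attached paths) gives
\[
\rho'\,(2\Sigma-3x_d)\;=\;-2\Sigma+12x_d+\sum_{j}(3j+2)\bigl(x_{v_j}+x_{u_j}+x_{w_j}\bigr)\;\ge\;5(x_{v_1}+x_{u_1}+x_{w_1})-2\Sigma+12x_d,
\]
and one further application of the eigenequation to the right-hand side produces a combination with only positive coefficients (at least $24(x_{v_1}+x_{u_1}+x_{w_1})+40\Sigma+3x_d$, up to a factor $1/\rho'$). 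Hence $2\Sigma-3x_d>0$ always, your Case~B is vacuous, and your Case~A argument alone finishes the proof; this double iteration of the eigenequation is exactly how the paper settles the sign without any case split.
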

\begin{proof}
Let $\rho =\rho(M_1^{'})$, $\rho '=\rho(K_4^{(r,s,t)})$ and $D=D(M_1^{'})$, $D'=D(K_4^{(r,s,t)})$. By Lemma~\ref{godsil}, $\rho '\leq \rho(K_4^{(n)})$. So, it suffices to show that $\rho < \rho '$. Let us consider a vertex labelling of the graphs as in Fig.~\ref{myciel_fig_1}. As we move from $M_1^{'}$ to $K_4^{(r,s,t)}$ the distance from $d$ to $a$, $b$, or $c$ decreases by one; the distance from $d$ to $u_1$, $v_1$ or $w_1$ increases by one. Moreover, the distances between all the other vertices increase or stay the same. Therefore,
$$\rho ' - \rho\geq x^T(D'-D)x\geq x_d(x_{v_1}+x_{u_1}+x_{w_1}-x_a-x_b-x_c)$$
where $x$ is the Perron vector of $D$. Applying the eigenequation $Dx=\rho x$ repetitively, we find that
\begin{eqnarray*}
\rho (x_{v_1}+x_{u_1}+x_{w_1}-x_a-x_b-x_c) &=& 2(x_a+x_b+x_c)-3x_d\\
&\geq & \frac{1}{\rho}\left(5(x_{u_1}+x_{v_1}+x_{w_1})-2(x_a+x_b+x_c)+12x_d\right)\\ 
&\geq & \frac{1}{\rho ^2}\left(24(x_{u_1}+x_{v_1}+x_{w_1})+40(x_a+x_b+x_c)+3x_d\right)
\end{eqnarray*}
and the latter is clearly positive. Thus, $\rho ' >\rho$.
\end{proof}

\begin{figure}[h!]
\begin{center}
\begin{tikzpicture}
[scale=2, vertices/.style={draw, fill=black, circle, minimum size = 10pt, inner sep=2pt}, another/.style={draw, fill=black, circle, minimum size = 3.5pt, inner sep=0.1pt}]
\node[another, label=left:{$b$}] (b) at (0,0) {};
\node[another, label=above:{$v$}] (v) at (0,.5) {};
\node[another, label=left:{$w$}] (w) at (0,-.5) {};
\node[another, label=right:{$a$}] (a) at (.5,.25) {};
\node[another, label=right:{$c$}] (c) at (.5,-.25) {};
\node[another, label=below:{$u$}] (u) at (1,0) {};
\node[another, label=above:{$d_1$}] (d1) at (1.5,0) {};
\node[another, label=above:{$d_2$}] (d2) at (2,0) {};
\node[another, label=above:{$d_{n-6}$}] (dson) at (2.6,0) {};
\foreach \to/\from in {b/v,b/w,v/a,b/c,b/a,w/c,a/c,u/a,u/c,u/d1,d1/d2}
\draw [-] (\to)--(\from);
\draw [-] (v) to [out=30,in=130,looseness=.7] (d1); 
\draw [-] (w) to [out=330,in=230,looseness=.7] (d1); 
\node[label={$\cdot \ \cdot \ \cdot$}](1) at (2.3,-.2){};
\node[label={$M^{'}_2$}](1) at (1.25,.5){};
\end{tikzpicture}
\hspace{.5in}
\begin{tikzpicture}
[scale=2, vertices/.style={draw, fill=black, circle, minimum size = 10pt, inner sep=2pt}, another/.style={draw, fill=black, circle, minimum size = 3.5pt, inner sep=0.1pt}]
\node[another, label=above:{$a$}] (a) at (0,0) {};
\node[another, label=above:{$c$}] (c) at (.5,0) {};
\node[another, label=below:{$b$}] (b) at (0,-.5) {};
\node[another, label=below:{$w$}] (w) at (.5,-.5) {};
\node[another, label=below:{$v$}] (v) at (1,-.5) {};
\node[another, label=below:{$u$}] (u) at (1.5,-.5) {};
\node[another, label=below:{$d_1$}] (d1) at (2,-.5) {};
\node[another, label=below right:{$d_{n-6}$}] (dson) at (2.6,-.5) {};
\foreach \to/\from in {a/b, a/c,b/c,a/w,c/w,b/w,w/v,v/u,u/d1}
\draw [-] (\to)--(\from);
\node[label={$\cdot \ \cdot \ \cdot$}](1) at (2.3,-.7){};
\node[label={$K_4^{(n)}$}](1) at (1,.3){};
\end{tikzpicture}
\caption{The graphs $M^{'}_2$ and $K_4^{(n)}$.}
\label{myciel_fig_2}
\end{center}
\end{figure}
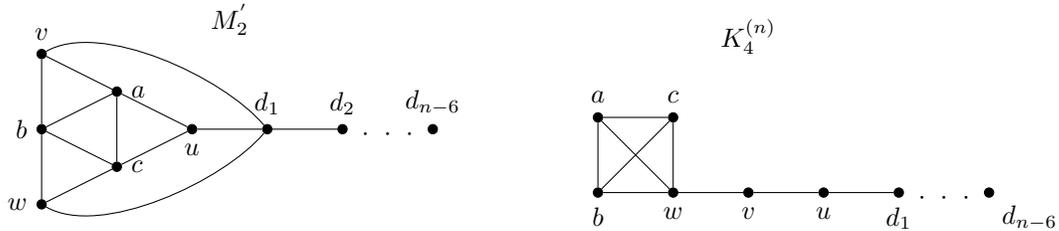

\begin{lemma}\label{myciel_lem2} For every $n\geq 7$, $\rho(M_2^{'})<\rho(K_4^{(n)})$.
\end{lemma}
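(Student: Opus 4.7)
The plan is to imitate the proof of Lemma~\ref{myciel_lem1}: with $x$ the Perron vector of $D(M_2')$ and $\rho=\rho(M_2')$, the Rayleigh quotient inequality gives $\rho(K_4^{(n)})-\rho\geq x^{T}(D(K_4^{(n)})-D(M_2'))x$, so it suffices to show that
$$x^{T}\bigl(D(K_4^{(n)}) - D(M_2')\bigr)x > 0,$$
where both graphs are labelled on the common vertex set $\{a,b,c,u,v,w,d_1,\dots,d_{n-6}\}$ as in Fig.~\ref{myciel_fig_2}.

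I would first exploit the symmetries of $M_2'$. The permutations $(a\,b)(u\,w)$ and $(a\,c)(v\,w)$ (fixing all other vertices) are graph automorphisms of $M_2'$, so the Perron coordinates satisfy $x_a=x_b=x_c=:\alpha$ and $x_u=x_v=x_w=:\beta$; set $\gamma_i=x_{d_i}$. This collapses the many individual coordinates into three parameters and greatly reduces the bookkeeping.

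Next I would tabulate the entries of $D(K_4^{(n)})-D(M_2')$. The crucial observation is that in $M_2'$ the triangle $\{a,b,c\}$ sits at distance only $2$ from the central vertex $d_1$ (through $u$, $v$, or $w$), whereas in $K_4^{(n)}$ it lies three steps further along the attached path; consequently every entry $(d_i,\{a,b,c\})$ increases by exactly $2$. The only strictly negative differences occur at the pairs $(a,w)$, $(u,v)$, and $(v,w)$, each equal to $-1$. Collecting like terms using the twin equalities above, the whole sum telescopes into
$$\tfrac{1}{2}\,x^{T}\bigl(D(K_4^{(n)})-D(M_2')\bigr)x = 2\beta(3\alpha-\beta) + 3(2\alpha+\beta)\sum_{i=1}^{n-6}\gamma_i,$$
so it is enough to establish $3\alpha-\beta>0$.

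For the final step I would apply the eigenequation $D(M_2')x=\rho x$ at the vertices $a$ and $u$ to obtain $\rho\alpha=2\alpha+4\beta+\sum_{i=1}^{n-6}(i+1)\gamma_i$ and $\rho\beta=4\alpha+4\beta+\sum_{i=1}^{n-6}i\gamma_i$, from which $\rho(3\alpha-\beta)=2\alpha+8\beta+\sum_{i=1}^{n-6}(2i+3)\gamma_i>0$. The main obstacle I anticipate is not algebraic but the precise bookkeeping of the distance matrix difference: one must verify carefully that $d_{M_2'}(a,d_i)=i+1$ and $d_{M_2'}(u,d_i)=d_{M_2'}(v,d_i)=d_{M_2'}(w,d_i)=i$ for every $i\geq 1$, because a single off-by-one slip would change the coefficients in the collapsed expression, potentially leaving the negative term $-2\beta^{2}$ unbalanced.
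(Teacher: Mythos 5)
Your proposal is correct and follows essentially the same route as the paper: compare $D(K_4^{(n)})$ and $D(M_2')$ under the labelling of Fig.~\ref{myciel_fig_2}, bound $\rho(K_4^{(n)})-\rho(M_2')$ below by $x^{T}\bigl(D(K_4^{(n)})-D(M_2')\bigr)x$ for the Perron vector $x$ of $M_2'$, and prove positivity via the eigenequation; your bookkeeping checks out (the negative entries are exactly $(a,w),(u,v),(v,w)$, each $-1$, and indeed $d_{M_2'}(a,d_i)=i+1$, $d_{M_2'}(u,d_i)=d_{M_2'}(v,d_i)=d_{M_2'}(w,d_i)=i$), and the automorphism argument giving $x_a=x_b=x_c=\alpha$, $x_u=x_v=x_w=\beta$ is valid since the Perron vector is unique and positive. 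The only (minor) difference is that the paper does not invoke the symmetry but instead lower-bounds the quadratic form by grouping each $-1$ entry with selected positive gains and verifies three separate eigenequation identities, whereas your exact collapse to $2\beta(3\alpha-\beta)+3(2\alpha+\beta)\sum_{i}\gamma_i$ reduces everything to the single inequality $3\alpha>\beta$, which your rows at $a$ and $u$ correctly establish.
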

\begin{proof}
Let $\rho =\rho(M_2^{'})$, $\rho '=\rho(K_4^{(n)})$ and $D=D(M_2^{'})$, $D'=D(K_4^{(n)})$. Let us consider a vertex labelling of the graphs as in Fig.~\ref{myciel_fig_2}. As we move from $M_2^{'}$ to $K_4^{(n)}$ the distances from $w$ to $a$ and $v$ to $u$ decrease by one, the distance from $u$ to $v$ decreases by one, and the distances between all the other vertices increase or stay the same. In particular, the distance from $a$ to $d_1$, $u$ and $v$ increases by $2$, $2$ and $1$ respectively; the distances from $v$ to $b$ and $d_1$ increase by one; and the distance from $c$ to $u$ increases by $2$. Therefore,
$$x^T(D'-D)x\geq x_a(2x_{d_1}+x_v-x_w)+x_u(2x_a+2x_c-x_v)+x_v(x_b+x_{d_1}-x_w)$$
for the Perron vector $x$ of $D$. By the eigenequation $\rho x=Dx$, we have 
$$\rho (2x_{d_1}+x_v-x_w)=3x_a+4x_b+5x_c+2x_u+4x_w+\sum_{j=1}^{n-6}2(j-1)x_{d_j}\,>\, 0,$$
$$\rho (2x_a+2x_c-x_v)=x_a+3x_b+2x_u+6x_v+4x_w+\sum_{j=1}^{n-6}(3j+4)x_{d_j}\, >\, 0,$$
$$\rho (x_b+x_{d_1}-x_w)=x_a+x_b+2x_c+x_u+2x_w+\sum_{j=1}^{n-6}2jx_{d_j}\, >\, 0.$$
Thus, $\rho '-\rho \geq x^T(D'-D)x >0 .$
\end{proof}

\section{Proof of Theorem~\ref{main}}

We are now ready to prove our main result.\\

\noindent {\it Proof of Theorem~\ref{main}.} Let $G$ be an extremal graph with maximal distance spectral radius among all connected $4$-chromatic planar graphs of order $n$. By the proof Lemma~\ref{k_critical_lemma}, $G$ is obtained from a $4$-critical planar graph $H$ by attaching paths at vertices of some independent set $S$ of $H$. If a nontrivial path is attached at a degree $4$ vertex in $S$, then $\Delta (G)\geq 5$ and Lemma~\ref{Delta5} shows that $G$ cannot be an extremal graph. So, we may assume that $S$ consists of nonadjacent vertices of degree at most three. Now we shall consider three cases:

\noindent {\it Case 1:} $H$ is $K_4$. The complete graph $K_4$ has no two non-adjacent vertices. So, $G$ must be $K_4^{(n)}$ if $H$ is $K_4$.
 
\noindent {\it Case 2:} $H$ is the Moser spindle $M$. Consider a vertex labelling of $M$ as in Fig.~\ref{diam_exp_1}. The graph $M$ has eight independent sets consisting of vertices of degree at most three.  However, by symmetry, it suffices to consider the independent sets $\{a,c\}$ and $\{a,b\}$ only. In Lemmas~ \ref{G1_lemma} and \ref{G2_lemma} it was shown that the distance spectral radius of a graph obtained from $M$ by attaching two paths at the vertices in $\{a,c\}$ or $\{a,b\}$ respectively is less than that of $K_4^{(n)}$. Therefore, this case is not possible.

\noindent {\it Case 3:} $H$ is the Mycielskian of a triangle $M^{'}$. Using the symmetry and the fact that $S$ does not contain a degree $4$ vertex of $M^{'}$, it suffices to consider the graphs $M_1^{'}$ and $M_2^{'}$ shown in Figures~\ref{myciel_fig_1} and \ref{myciel_fig_2}. It follows from Lemmas~\ref{myciel_lem1} and \ref{myciel_lem2} that $G$ cannot be an extremal graph in this case.

\noindent {\it Case 4:} $H$ is different from $K_4$, $M$ and $M^{'}$. By Theorem~\ref{4crit_prop_P_thm}, $H$ satisfies property $\mathcal{P}$ and therefore $G$ also satisfies $\mathcal{P}$. By Corollary~\ref{property_P}, the graph $G$ cannot be an extremal graph.

Thus, $H$ must be $K_4$ and the unique extremal graph is $K_4^{(n)}$.
\qed

\vskip0.3in

\bibliographystyle{elsarticle-num}

\end{document}